\numberwithin{equation}{section}
\newtheoremstyle{thmlemcorr}{10pt}{10pt}{\itshape}{}{\bfseries}{.}{10pt}{{\thmname{#1}\thmnumber{ #2}\thmnote{ (#3)}}}
\newtheoremstyle{thmlemcorr*}{10pt}{10pt}{\itshape}{}{\bfseries}{.}\newline{{\thmname{#1}\thmnumber{ #2}\thmnote{ (#3)}}}
\newtheoremstyle{remexample}{10pt}{10pt}{}{}{\bfseries}{.}{10pt}{{\thmname{#1}\thmnumber{ #2}\thmnote{ (#3)}}}
\theoremstyle{thmlemcorr}
\newtheorem{theorem}{Theorem}
\numberwithin{theorem}{section}
\newtheorem{lemma}[theorem]{Lemma}
\newtheorem{corollary}[theorem]{Corollary}
\newtheorem{proposition}[theorem]{Proposition}
\theoremstyle{thmlemcorr*}
\newtheorem{theorem*}{Theorem}
\newtheorem{lemma*}[theorem]{Lemma}
\newtheorem{corollary*}[theorem]{Corollary}
\newtheorem{proposition*}[theorem]{Proposition}
\newtheorem{problem*}[theorem]{Problem}
\newtheorem{conjecture*}[theorem]{Conjecture}
\newtheorem{definition*}[theorem]{Definition}
\theoremstyle{remexample}
\newtheorem{remark}[theorem]{Remark}
\newtheorem{example}[theorem]{Example}
\newcommand{\Crm}{\mathrm{C}}
\newcommand{\Irm}{\mathrm{I}}
\newcommand{\Lrm}{\mathrm{L}}
\newcommand{\Trm}{\mathrm{T}}
\newcommand{\Bcal}{\mathcal{B}}
\newcommand{\Dcal}{\mathcal{D}}
\newcommand{\Fcal}{\mathcal{F}}
\newcommand{\Hcal}{\mathcal{H}}
\newcommand{\Lcal}{\mathcal{L}}
\newcommand{\Mcal}{\mathcal{M}}
\newcommand{\Fbf}{\mathbf{F}}
\newcommand{\Jbf}{\mathbf{J}}
\newcommand{\Mbf}{\mathbf{M}}
\newcommand{\Pbf}{\mathbf{P}}
\newcommand{\Fbb}{\mathbb{F}}
\DeclareMathOperator*{\esssup}{ess\,sup}
\DeclareMathOperator{\Id}{Id}
\DeclareMathOperator{\graph}{graph}
\DeclareMathOperator*{\wslim}{w*-lim}
\DeclareMathOperator{\dist}{dist}
\DeclareMathOperator{\tr}{tr}
\DeclareMathOperator{\spn}{span}
\DeclareMathOperator{\supp}{supp}
\DeclareMathOperator{\Wedge}{{\textstyle\bigwedge}}
\newcommand{\ee}{\mathrm{e}}
\newcommand{\setb}[2]{\bigl\{\, #1 \ \ \textup{\textbf{:}}\ \ #2 \,\bigr\}}
\newcommand{\setB}[2]{\Bigl\{\, #1 \ \ \textup{\textbf{:}}\ \ #2 \,\Bigr\}}
\newcommand{\setBB}[2]{\biggl\{\, #1 \ \ \textup{\textbf{:}}\ \ #2 \,\biggr\}}
\newcommand{\setsub}[2]{\{ #1 \ \textup{:}\ #2 \}}
\newcommand{\norm}[1]{\|#1\|}
\newcommand{\abs}[1]{|#1|}
\newcommand{\absb}[1]{\bigl|#1\bigr|}
\newcommand{\absBB}[1]{\biggl|#1\biggr|}
\newcommand{\altnorm}[1]{{\left\vert\kern-0.25ex\left\vert\kern-0.25ex\left\vert #1 \right\vert\kern-0.25ex\right\vert\kern-0.25ex\right\vert}}
\newcommand{\dpr}[1]{\langle #1 \rangle}
\newcommand{\dprb}[1]{\bigl\langle #1 \bigr\rangle}
\newcommand{\dprBB}[1]{\biggl\langle #1 \biggr\rangle}
\newcommand{\dbr}[1]{\llbracket #1 \rrbracket}
\newcommand{\cl}[1]{\overline{#1}}
\newcommand{\dd}{\;\mathrm{d}}
\newcommand{\N}{\mathbb{N}}
\newcommand{\R}{\mathbb{R}}
\newcommand{\Z}{\mathbb{Z}}
\newcommand{\loc}{\mathrm{loc}}
\newcommand{\ONE}{\mathbbm{1}}
\newcommand{\toweakstar}{\overset{*}\rightharpoonup}
\newcommand{\todown}{\downarrow}
\newcommand{\BigO}{\mathrm{\textup{O}}}
\newcommand{\sbullet}{\begin{picture}(1,1)(-0.5,-2.5)\circle*{2}\end{picture}}
\newcommand{\frarg}{\,\sbullet\,}
\newcommand{\BV}{\mathrm{BV}}
\newcommand{\eps}{\epsilon}
\newcommand{\tv}[1]{\norm{#1}}
\newcommand{\term}[1]{\textbf{#1}}
\newcommand{\Diss}{\mathrm{Diss}}
\newcommand{\tbf}{\mathbf{t}}
\newcommand{\pbf}{\mathbf{p}}
\newcommand{\Lip}{\mathrm{Lip}}
\newcommand{\FpV}{\mathrm{pV}_\Fbf}
\DeclareMathOperator{\Tan}{T}
\DeclareMathOperator{\Var}{Var}
\DeclareMathOperator{\cone}{\lhd}
\DeclareMathOperator{\revcone}{\rhd}
\newcounter{assumption}
\newcommand{\nextas}[1]{%
   \refstepcounter{assumption}%
   \protected@write \@auxout{}{\string\newlabel{#1}{{(A\theassumption)}{\thepage}{(A\theassumption)}{#1}{}}}%
   \hypertarget{#1}{(A\theassumption)}%
}
\newcommand{\nextasnamed}[2]{%
   \refstepcounter{assumption}%
   \protected@write \@auxout{}{\string\newlabel{#1}{{(#2)}{\thepage}{(#2)}{#1}{}}}%
   \hypertarget{#1}{(#2)}%
}
\def\XXint#1#2#3{{\setbox0=\hbox{$#1{#2#3}{\int}$} 
\vcenter{\hbox{$#2#3$}}\kern-.5\wd0}}
\DeclareRobustCommand{\intprod}{%
  \mathbin{\mathpalette\int@prod{(0.1,0)(0.9,0)(0.9,0.8)}}}
\DeclareRobustCommand{\restrict}{%
  \mathbin{\mathpalette\int@prod{(0.1,0.8)(0.1,0)(0.9,0)}}}	
\newcommand{\int@prod}[2]{%
  \begingroup
  \sbox\z@{$\m@th#1+$}%
  \setlength\unitlength{\wd\z@}%
  \begin{picture}(1,1)
  \roundcap
  \polyline#2
  \end{picture}%
  \endgroup
}
\renewcommand{\eps}{\varepsilon}
\renewcommand{\epsilon}{\varepsilon}
\renewcommand{\phi}{\varphi}
\renewcommand{\tilde}{\widetilde}
\renewcommand{\hat}{\widehat}
\renewcommand{\bar}{\overline}
\begin{document}


\title[BV integral currents]{Space-time integral currents of bounded variation}


\author{Filip Rindler}
\address{Mathematics Institute, University of Warwick, Coventry CV4 7AL, United Kingdom.}
\email{F.Rindler@warwick.ac.uk}


\hypersetup{
  pdfauthor = {Filip Rindler},
  pdftitle = {Space-time integral currents of bounded variation}
}


\maketitle


\begin{abstract}
Motivated by a recent model for elasto-plastic evolutions that are driven by the flow of dislocations, this work develops a theory of space-time integral currents with bounded variation in time, which enables a natural variational approach to the analysis of rate-independent geometric evolutions. Based on this, we further introduce the notion of Lipschitz deformation distance between integral currents, which arises physically as a (simplified) dissipation distance. Several results are obtained: A Helly-type compactness theorem, a deformation theorem, an isoperimetric inequality, and the equivalence of the convergence in deformation distance with the classical notion of weak* (or flat) convergence. Finally, we prove that the Lipschitz deformation distance agrees with the (integral) homogeneous Whitney flat metric for boundaryless currents. Physically, this means that two seemingly different ways to measure the dissipation actually coincide.
 
\vspace{4pt}




\noindent\textsc{Date:} \today{}.
\end{abstract}



\section{Introduction}

The recent work~\cite{HudsonRindler21?} introduced a model for the evolution of macroscopic plastic deformations of single crystals based on the movement of dislocations, that is, $1$-dimensional topological defects in the crystal lattice~\cite{AbbaschianReedHill09,HullBacon11book,AndersonHirthLothe17book}. Like in a number of previous works, see, e.g.,~\cite{ContiGarroniMassaccesi15,ContiGarroniOrtiz15,ScalaVanGoethem19}, these dislocations are modelled mathematically as $1$-dimensional boundaryless integral currents~\cite{KrantzParks08book,Federer69book}. An existence result for energetic solutions to the model in~\cite{HudsonRindler21?} is established in~\cite{Rindler21b?}, for which the theory developed in the present work is an essential ingredient.

A pivotal feature of the approach in~\cite{HudsonRindler21?} is that the evolution of all dislocations with a Burgers vector $b$ (from a finite set $\Bcal$ of all possible Burgers vectors) is represented as a \emph{slip trajectory}, namely a $2$-dimensional integral current $S^b$ in the space-time cylinder $[0,T] \times \R^3$ with the property that
\begin{equation} \label{eq:partialSb}
  \partial S^b \restrict ((0,T) \times \R^3) = 0.
\end{equation}
Then, the dislocations with Burgers vector $b$ at time $t \in (0,T)$ are given by
\[
  T^b(t) := \pbf_*(S^b|_t),
\]
that is, the pushforward under the spatial projection $\pbf(t,x) := x$ of the slice $S^b|_t$ of $S^b$ at time $t$ (more precisely, the slice with respect to the temporal projection $\tbf(t,x) := t$). The theory of integral currents in conjunction with~\eqref{eq:partialSb} entails that $T^b(t)$ is a $1$-dimensional integral current and $\partial T^b(t) = 0$ for almost every $t \in (0,T)$.

The $2$-current in space given as
\[
  S^b|_s^t := \pbf_* \bigl[ S^b \restrict ([s,t] \times \R^3) \bigr]
\]
is the traversed surface from $T^b(s)$ to $T^b(t)$. Customarily, $S^b|_s^t$ is called the \emph{slip surface} from $s$ to $t$. The principal reason for our space-time approach, which employs the full slip trajectory and not just the slip surface, is that the evolution of the plastic distortion in the full model requires a \enquote{time index} (i.e., the time coordinate) along the dislocation movement. Also, $S^b$ may traverse an area multiple times with different orientations, which leads to cancellation in $S^b|_s^t$, but of course these multiply-traversed areas all have to be counted in the dissipation (that is, the energetic cost of dislocation movement) with their correct \enquote{absolute} multiplicity.  We remark that in the context of gradient flows the benefit of space-time currents was also noticed independently in~\cite{Kampschulte17PhD}. 

For the rate-independent case of the model in~\cite{HudsonRindler21?}, the dissipational cost as the dislocations (with Burgers vector $b$) move from time $s$ to time $t$, is derived in~\cite{HudsonRindler21?,Rindler21b?} (using the $2$-vector version of the geometric slip rate) to be of the form 
\begin{equation} \label{eq:Diss_intro}
  \Diss(S^b;[s,t]) := \int_{[s,t] \times \R^3} R^b\bigl( P(\tau,x) \pbf(\vec{S}^b(\tau,x)) \bigr) \dd \tv{S^b}(\tau,x).
\end{equation}
Here, the function $R^b \colon \Wedge_2\R^3  \to [0,\infty)$ is the (possibly anisotropic and $b$-dependent) convex and $1$-homogeneous \emph{dissipation potential}, which expresses the dissipational cost of a unit slip trajectory, and $P(\tau,x)$ denotes the \emph{plastic distortion} at time $\tau$ and point $x$, which keeps track of how much the specimen has deformed plastically at $x$ up to the time $\tau$. Moreover, we have denoted by $S^b = \vec{S}^b \, \tv{S^b}$ the Radon--Nikod\'{y}m decomposition of the integral current $S^b$ into its \emph{orienting $2$-vector} $\vec{S}^b \in \Lrm^\infty(\tv{S^b};\Wedge_2 \R^{1+3})$ (which is simple and has unit length) and the \emph{total variation measure} $\tv{S^b} \in \Mcal^+([0,T] \times \R^3)$. Since we assumed $S^b$ to be an \emph{integral} $2$-current, $\tv{S^b} = m \, \Hcal^2 \restrict R$ with the \emph{multiplicity} $m \in \Lrm^1(\Hcal^2 \restrict R;\N)$ and $R$ a (countably) $\Hcal^2$-rectifiable set. The applications of $\pbf$ and $P$ in~\eqref{eq:Diss_intro} are understood as the pushforwards of a $2$-vector under $\pbf$ and $P$, respectively, namely $P(v \wedge w) = (Pv) \wedge (Pw)$ and for non-simple $2$-vectors extended by linearity; likewise for $\pbf$. We refer to Section~\ref{sc:notation} below for details on these notions. 


It turns out that the dissipation given in~\eqref{eq:Diss_intro} controls a type of \emph{variation} of $S^b$ in the interval $[s,t]$, namely
\[
  \Var(S^b;[s,t]) := \int_{[s,t] \times \R^3} \abs{\pbf(\vec{S}^b)} \dd \tv{S^b}.
\]
Indeed, if we make the natural assumption $C^{-1}\abs{\xi} \leq R^b(\xi) \leq C\abs{\xi}$ ($C > 0$ independent of $b$), then, for $P$ close to the identity,
\[
  \Diss(S^b;[s,t]) \geq C^{-1} \cdot \Var(S^b;[s,t]),
\]
and the claimed coercivity holds. In the case where $P$ is not near the identity, we need to modify the dissipation from the form given in~\eqref{eq:Diss_intro} by multiplying it with a hardening factor, which depends on (the cofactors of) $P$. We omit the details of this modification here and refer to the forthcoming work~\cite{Rindler21b?} for precise assumptions and the resulting coercivity estimate.

Generalizing to higher-dimensional integral currents, we define the variation of a $(1+k)$-dimensional integral current $S$ in the space-time cylinder $[0,T] \times \R^d$ as
\[
  \Var(S;I) := \int_{I \times \R^d} \abs{\pbf(\vec{S})} \dd \tv{S}
\]
for any interval $I \subset [0,T]$. We take this definition as the starting point of a \emph{BV-theory of space-time currents}, which turns out to be more natural than the classical approach via BV-functions on a time interval with values in the space of integral $k$-current (metrized with the flat norm). It can also be seen easily, and we will do so in Example~\ref{ex:BV}, that classical functions of bounded variation~\cite{AmbrosioFuscoPallara00book} constitute the case $k = 0$. However, even in this case the present theory is stronger, in particular allowing one to express the path connecting jump endpoints (more akin to BV-liftings~\cite{JungJerrard04,RindlerShaw19} or cartesian currents~\cite{GiaquintaModicaSoucek98book1,GiaquintaModicaSoucek98book2}). While our main motivation is to lay the groundwork for the rigorous analysis in~\cite{Rindler21b?} of the model from~\cite{HudsonRindler21?}, this theory seems interesting in its own right and may be useful for other applications as well.

The first aim of the present work is thus to develop aspects of the general theory of BV space-time currents. Most notably, we will obtain a Helly-type compactness theorem (Theorem~\ref{thm:current_Helly}), a suitable deformation theorem (Theorem~\ref{thm:defo}), an isoperimetric inequality (Theorem~\ref{thm:isop}), and the equivalence of our notion of convergence with respect to a \enquote{deformation distance} (see below) with the classical weak* convergence (or Whitney flat norm convergence) of currents (Theorem~\ref{thm:equiv}).

The second aim of the present work is to answer the following question, which is compelling from both mathematical and physical perspectives: To measure the \enquote{distance} between two boundaryless integral $k$-currents $T_0,T_1$ (e.g., representing dislocation systems if $k=1$), we now have two options: Classically, one might measure this distance via the (integral) homogeneous Whitney flat norm (see, e.g.,~\cite{ScalaVanGoethem19} for such an approach in the theory of dislocations), i.e.,
\[
  \Fbb(T_1-T_0) := \inf \, \setB{ \Mbf(Q) }{ \text{$Q$ integral $(1+k)$-current with $\partial Q = T_1-T_0$} }.
\]
We remark that in this work we use exclusively the \emph{integral} versions of the flat norm, where all currents are assumed to be integral. In general dimensions it seems to be unknown if they are equal to their non-integral counterparts, see~\cite{HardtPitts86} or~\cite[Remark~5]{BrezisMironescu19}.
Alternatively, in the spirit of the theory developed in the present work, we could employ the \emph{deformation distance}
\begin{align*}
  \dist_{\Lip}(T_0,T_1) := \inf \setB{ \Var(S;[0,1]) }{ &\text{$S$ integral $(1+k)$-current in $[0,1] \times \R^d$ from $T_0$ to $T_1$,} \\
  &\text{that is, $\partial S = \delta_1 \times T_1 - \delta_0 \times T_0$, and $S$ Lipschitz in time}  }.
\end{align*}
We postpone the precise definition of the notion of Lipschitz regularity in time until Section~\ref{sc:Var}, but its main condition (in this case) is that $t \mapsto \Var(S;[0,t])$ is a scalar Lipschitz function.

The first option corresponds to measuring the dissipation as the area of the slip surface and the second option corresponds to measuring the dissipation as the variation of the slip trajectory, i.e.,~\eqref{eq:Diss_intro} for $R^b(\xi) = \abs{\xi}$ and $P = \Id$ (i.e., isotropically and \enquote{near the identity plastic deformation}). As mentioned before, only the space-time formulation provides us with the \enquote{time index} needed to define the evolution for the plastic distortion $P$. However, for considerations only pertaining to the dissipational cost of dislocation movement near the identity plastic deformation $P = \Id$, the existence of said time index should be irrelevant. Hence, neglecting issues of domains, one may conjecture that
\[
  \Fbb(T_1-T_0) = \dist_{\Lip}(T_0,T_1).
\]
The inequality \enquote{$\leq$} is obviously true via a pushforward under the spatial projection since the resulting slip surface is admissible in $\Fbb$. In fact, even the conjectured equality turns out to be true, see Theorem~\ref{thm:equal}, but the proof of the  inequality \enquote{$\geq$} is much more involved. As a consequence of this theorem, one can always find a \emph{time-indexed} minimizer for $\Fbb(T_1-T_0)$, which deforms $T_0$ to $T_1$ progressively (even with Lipschitz regularity in time). This fact is perhaps somewhat surprising and seems interesting beyond the motivation of the conjecture.

The outline of this paper is as follows: After recalling notation and basic facts in Section~\ref{sc:notation}, we introduce the theory of space-time integral currents of bounded variation (in time) in Section~\ref{sc:BVcurr}. We build on this is Section~\ref{sc:homdef} to define a deformation theory of currents. Finally, Section~\ref{sc:defdist} investigates the deformation distance and proves the above conjecture.

\subsection*{Acknowledgements}

This project has received funding from the European Research Council (ERC) under the European Union's Horizon 2020 research and innovation programme, grant agreement No 757254 (SINGULARITY). The author would like to thank Giovanni Alberti, Paolo Bonicatto, Antonio De~Rosa, Giacomo Del~Nin, Thomas Hudson, Fang-Hua Lin, Ulrich Menne, and Felix Schulze for discussions related to this work.

\section{Notation and preliminaries} \label{sc:notation}

In this section we fix our notation, collect some known results, and recall tools that will be needed later on.

\subsection{Linear and multilinear algebra}

If not stated otherwise, on the space of matrices $\R^{m \times n}$ we use the Frobenius inner product $A : B := \sum_{ij} A^i_j B^i_j = \tr(A^TB) = \tr(B^TA)$, where upper indices indicate rows and lower indices indicate columns. As matrix norm we use the induced Frobenius norm, i.e., $\abs{A} := (A : A)^{1/2} = (\tr(A^T A))^{1/2}$.

In all of the following, let $k = 0,1,2,\ldots,n$. We denote the set of $k$-vectors in an $n$-dimensional real Hilbert space $V \cong \R^n$ by $\Wedge_k V$ and the set of $k$-covectors in $V$ by $\Wedge^k V$ (in particular, $\Wedge_0 V \cong \Wedge^0 V \cong \R$ are the real scalars). Recall that a $k$-vector $\eta \in \Wedge_k V$ is called simple if $\eta = v_1 \wedge \cdots \wedge v_k$ for $v_\ell \in V$ ($\ell = 1,\ldots,k$), where \enquote{$\wedge$} denotes the exterior (wedge) product; likewise for $k$-covectors. The duality pairing between a simple $k$-vector $\xi = v_1 \wedge \cdots \wedge v_k$ and a simple $k$-covector $\alpha = w^1 \wedge \cdots \wedge w^k$ is given as $\dpr{\xi,\alpha} = \det \, (v_i \cdot w^j)^i_j$ and the duality product is then extended to non-simple $k$-vectors and $k$-covectors by linearity. For $\eta \in \Wedge_k V$ and $\alpha \in \Wedge^l V$ we define $\eta \intprod \alpha \in \Wedge^{l-k} V$ and $\eta \restrict \alpha \in \Wedge_{k-l} V$ via
\begin{align*}
  \dprb{\xi, \eta \intprod \alpha} := \dprb{\xi \wedge \eta, \alpha},  \qquad \xi \in \Wedge_{l-k} V,\\
  \dprb{\eta \restrict \alpha, \beta} := \dprb{\eta, \alpha \wedge \beta},  \qquad \beta \in \Wedge^{k-l} V.
\end{align*}
The mass and comass norms of $\eta \in \Wedge_k V$ and $\alpha \in \Wedge^k V$ are denoted by
\begin{align*}
  \abs{\eta} &:= \sup \setb{ \absb{\dprb{\eta,\alpha}} }{ \alpha \in \Wedge^k V,\;\abs{\alpha} = 1 },  \\
  \abs{\alpha} &:= \sup \setb{\absb{\dprb{\eta,\alpha}} } { \eta \in \Wedge_k V \text{ simple, unit} },
\end{align*}
respectively. Here, a simple $k$-vector $\eta$ is called a unit if it can be expressed as $\eta = v_1 \wedge \cdots v_k$ with the $v_i$ forming an orthonormal basis of $\spn \eta := \spn \{ v_1, \ldots, v_k \}$.

If $S \colon V \to W$ is linear, where $V,W$ are real finite-dimensional Hilbert spaces, we define the linear map $\Wedge^k S \colon \Wedge^k V \to \Wedge^k W$ by setting, for $v_1, \ldots, v_k \in V$,
\[
  S(v_1 \wedge \cdots \wedge v_k) = (\Wedge^k S)(v_1 \wedge \cdots \wedge v_k) := (Sv_1) \wedge \cdots \wedge (Sv_k)
\]
and extending by (multi-)linearity to $\Wedge^k V$. We will still usually write simply $S$ for $\Wedge^k S$.

\subsection{Area and coarea formula} \label{sc:area_coarea}

For the convenience of the reader (and for easy reference later), we recall the area and coarea formulas and refer to~\cite[3.2.22]{Federer69book},~\cite[Sections~2.10,~2.12]{AmbrosioFuscoPallara00book}, \cite[Chapter~5]{KrantzParks08book} for proofs. As usual, we denote by $\Hcal^k \restrict R$ the $k$-dimensional Hausdorff measure restricted to a (countably) $k$-rectifiable set; $\Lcal^d$ is the $d$-dimensional Lebesgue measure.

Let $R \subset \R^d$ be a countably $k$-rectifiable set with $\Hcal^k(R \cap K) < \infty$ for every compact set $K \subset \R^d$, and let $f \colon \R^d \to \R^m$ be Lipschitz continuous. For $\Hcal^k$-almost every $x \in R$ the approximate tangent space $\Trm_x R = \spn\{v_1, \ldots, v_k\}$ (with $\{v_i\}_i$ an orthonormal basis) and the restriction $D^R f(x)$ of the differential $Df(x)$ to $\Trm_x R$ exist. Moreover, we may identify $D^R f(x)$ with an $(m \times k$)-matrix (with respect to $\{v_i\}$). Then, define the \term{$k$-dimensional Jacobian $\Jbf^R_k f$ of $f$ relative to $R$} for $k \leq m$ via
\[
  \Jbf^R_k f(x) := \sqrt{\det(D^R f(x)^T D^R f(x))} = \absb{D^R f(x)[v_1 \wedge \cdots \wedge v_k]} = \absb{Df(x)[v_1] \wedge \cdots \wedge Df(x)[v_k]} \; ,
\]
and for $k \geq m$ define the \term{$m$-dimensional Jacobian $\Jbf^R_m f$ of $f$ relative to $R$} via
\[
  \Jbf^R_m f(x) := \sqrt{\det(D^R f(x)D^R f(x)^T)} \; .
\]
It is easy to see that the above formulas do not depend on the choice of the orthonormal basis $\{v_i\}$ (see, e.g.,~\cite[Lemma~5.3.5]{KrantzParks08book}).

\begin{proposition}[Area formula] \label{prop:area}
Let $R \subset \R^d$ be a countably $k$-rectifiable set with $\Hcal^k(R \cap K) < \infty$ for every compact set $K \subset \R^d$, and let $f \colon \R^d \to \R^m$ be Lipschitz continuous with $k \leq m$. Then, for every $\Hcal^k$-measurable map $g \colon R \to \R^N$, it holds that
\[
  \int_R g(x) \; \Jbf^R_k f(x) \dd \Hcal^k(x) = \int_{\R^m} \sum_{x \in R \cap f^{-1}(y)} g(x) \dd \Hcal^k(y).
\]
In particular, 
\[
  \int_R \Jbf^R_k f(x) \dd \Hcal^k(x) = \int_{\R^m} \Hcal^0(R \cap f^{-1}(y)) \dd \Hcal^k(y)
\]
and, if $f$ is injective,
\[
  \int_R g(x) \; \Jbf^R_k f(x) \dd \Hcal^k(x) = \int_{f(R)} g(f^{-1}(y)) \dd \Hcal^k(y), \qquad
  \int_R \Jbf^R_k f(x) \dd \Hcal^k(x) = \Hcal^k(f(R)).
\]
\end{proposition}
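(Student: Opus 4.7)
The area formula is classical, and I would follow the standard path: reduce to pieces on which $f$ is a $C^1$ diffeomorphism (up to negligible sets), apply the change-of-variables formula on each piece, and reassemble by countable additivity. By simple-function approximation it suffices to prove the identity when $g = \ONE_E$ is the indicator of a bounded Borel set $E \subset R$, i.e.,
\[
  \int_E \Jbf^R_k f \dd \Hcal^k = \int_{\R^m} \Hcal^0(E \cap f^{-1}(y)) \dd \Hcal^k(y).
\]

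Since $R$ is countably $k$-rectifiable, I would write $R = N \cup \bigcup_{i} R_i$ with $\Hcal^k(N) = 0$ and each $R_i$ a pairwise-disjoint Borel subset of an embedded $C^1$ $k$-submanifold $M_i \subset \R^d$. The null set $N$ contributes nothing on either side, since Lipschitz images of $\Hcal^k$-null sets are $\Hcal^k$-null. For each $R_i$, Whitney's extension theorem allows one to replace $f|_{R_i}$ by a $C^1$ map agreeing with $f$ outside a set of arbitrarily small $\Hcal^k$-measure, so by exhaustion one reduces to the case of $C^1$ maps on $C^1$ rectifiable pieces. (Equivalently one can invoke Rademacher's theorem, but Whitney extension is cleaner for this proof.)

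Having $C^1$ regularity, I would split each $R_i$ according to the rank of the tangential derivative $D^{R_i} f$. On the singular set $\{\rank D^{R_i} f < k\}$ the Jacobian $\Jbf^{R_i}_k f$ vanishes identically, while a Sard-type argument shows that the image under $f$ has $\Hcal^k$-measure zero in $\R^m$, so both sides vanish. On the regular set $\{\rank D^{R_i} f = k\}$, the inverse function theorem applied to $f \circ \varphi_i$ (where $\varphi_i \colon U_i \subset \R^k \to M_i$ is a local $C^1$ parametrization) produces, via a Vitali-type covering, a countable Borel partition $\{E_{ij}\}_j$ of the regular set on each piece of which $f$ is a bi-Lipschitz diffeomorphism onto its image. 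On each $E_{ij}$ the identity
\[
  \int_{E \cap E_{ij}} \Jbf^{R_i}_k f \dd \Hcal^k = \Hcal^k\bigl(f(E \cap E_{ij})\bigr)
\]
is the classical change-of-variables formula for $C^1$ diffeomorphisms between open subsets of $\R^k$, transported via $\varphi_i$. Summing over $i,j$ yields the claim for $g = \ONE_E$, since the multiplicity $\Hcal^0(E \cap f^{-1}(y))$ counts precisely the number of pieces $E_{ij}$ containing a preimage of $y$, and the general statement then follows by linearity and monotone convergence.

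The principal technical obstacle is the measurable decomposition of the regular set into bi-Lipschitz pieces: the inverse function theorem provides only \emph{local} invertibility, and upgrading this to a countable \emph{global} Borel partition that exhausts the regular set up to a null set requires care with the Vitali covering combined with a Lusin-type approximation of the tangential differential $x \mapsto D^{R_i}f(x)$ by continuous maps, so that the modulus of invertibility becomes uniform on each piece. Once this decomposition is in hand, the remaining steps are essentially book-keeping.
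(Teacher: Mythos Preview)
Your sketch follows the standard route to the area formula and is essentially correct; however, the paper does not actually prove this proposition. It is stated as a recalled preliminary result, with the proof deferred to the references \cite[3.2.22]{Federer69book}, \cite[Sections~2.10,~2.12]{AmbrosioFuscoPallara00book}, and \cite[Chapter~5]{KrantzParks08book}. Your outline---rectifiable decomposition, $C^1$-approximation of the Lipschitz map on each piece, rank splitting via Sard, and local inversion on the regular set---is precisely the strategy carried out in those sources (particularly Krantz--Parks), so there is nothing to compare beyond noting that you have reproduced the classical argument the paper cites.

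One minor terminological point: what you call ``Whitney's extension theorem'' in this context is more accurately the $C^1$-Lusin approximation of Lipschitz maps (sometimes attributed to Federer or derived from Whitney extension combined with Rademacher), which gives agreement with a $C^1$ map outside a set of small measure rather than an extension from a closed set. This does not affect the validity of your sketch.
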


\begin{proposition}[Coarea formula] \label{prop:coarea}
Let $R \subset \R^d$ be a countably $k$-rectifiable set with $\Hcal^k(R \cap K) < \infty$ for every compact set $K \subset \R^d$, and let $f \colon \R^d \to \R^m$ be Lipschitz continuous with $k \geq m$. Then, for every $\Hcal^k$-measurable map $g \colon R \to \R^N$, it holds that
\[
  \int_R g(x) \; \Jbf^R_m f(x) \dd \Hcal^k(x) = \int_{\R^m} \int_{R \cap f^{-1}(y)} g(x) \dd \Hcal^{k-m}(x) \dd \Hcal^m(y).
\]
In particular,
\[
  \int_R \Jbf^R_m f(x) \dd \Hcal^k(x) = \int_{\R^m} \Hcal^{k-m}(R \cap f^{-1}(y)) \dd \Hcal^m(y).
\]
\end{proposition}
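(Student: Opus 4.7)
The plan is to reduce the rectifiable coarea formula to the classical Euclidean coarea formula on $\R^k$ by exploiting the countable $k$-rectifiable structure of $R$, in direct parallel to the usual proof of the rectifiable area formula. By the standard monotone-class argument (writing $g$ as an increasing limit of simple functions, then splitting into components of $\R^N$), it suffices to treat $g = \chi_E$ for a Borel subset $E \subset R$; and by the linearity of both sides in $g$, it is even enough to prove the formula for $g \equiv 1$ on an arbitrary countably $k$-rectifiable set, applying the resulting identity to $E$ as a sub-rectifiable-set.

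Next I would decompose $R$ via its rectifiable structure: there exist countably many disjoint Borel sets $A_i \subset \R^k$ and bi-Lipschitz maps $\psi_i : A_i \to \R^d$ such that the images $R_i := \psi_i(A_i)$ are disjoint, $\Hcal^k(R \setminus \bigcup_i R_i) = 0$, and each $R_i$ sits in a $C^1$ $k$-submanifold of $\R^d$. Both sides of the claimed identity are countably additive in $R$, so, provided the $\Hcal^k$-null remainder contributes zero on the right (which follows from the Euclidean coarea formula applied to any Borel cover of the null set), the problem reduces to $R = \psi(A)$ for a single bi-Lipschitz $\psi : A \subset \R^k \to \R^d$.

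For this reduced problem, the classical Euclidean coarea formula applied to the Lipschitz map $f \circ \psi : \R^k \to \R^m$ reads
\[
  \int_A \Jbf_m(f \circ \psi)(u) \dd \Lcal^k(u) = \int_{\R^m} \Hcal^{k-m}\bigl( A \cap (f \circ \psi)^{-1}(y) \bigr) \dd \Hcal^m(y).
\]
I would then transport this back to $R$ via two ingredients. First, the multiplicative chain-rule identity $\Jbf_m(f \circ \psi)(u) = \Jbf^R_m f(\psi(u)) \cdot \Jbf^{\R^k}_k \psi(u)$, valid at $\Lcal^k$-a.e.\ $u \in A$ by Rademacher's theorem (after Whitney-extending $\psi$), which expresses the $m$-Jacobian of the composition as the $m$-Jacobian of $f$ along the tangent plane $\Trm_x R$ times the $k$-Jacobian of the parametrisation. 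Second, Proposition~\ref{prop:area} applied to $\psi$ on $A$ to rewrite the left-hand side as an $\Hcal^k$-integral over $R$, and applied to the restriction $\psi|_{A \cap (f \circ \psi)^{-1}(y)}$ (a bi-Lipschitz bijection onto $R \cap f^{-1}(y)$) on each $(k-m)$-dimensional fibre to rewrite the inner Hausdorff measure on the right-hand side as an $\Hcal^{k-m}$-integral over $R \cap f^{-1}(y)$. Combining the two identifications yields the claim for $g \equiv 1$, and the general case follows as described above.

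The main obstacle I expect is the chain-rule identity for the $m$-Jacobian: it is not as immediate as its equidimensional analogue when $k > m$. One has to argue, via the Cauchy--Binet formula and the singular-value decomposition of $D\psi(u)$, that the square root of the Gram determinant of $D(f \circ \psi)(u) = Df(\psi(u)) \circ D\psi(u)$ factors as a product of two Jacobians, where the tangent plane $\Trm_{\psi(u)} R = \psi_*(\R^k)$ is identified with the image of $D\psi(u)$. A secondary measurability check — that $y \mapsto \int_{R \cap f^{-1}(y)} g \dd \Hcal^{k-m}$ is $\Hcal^m$-measurable — then comes for free from the Euclidean coarea formula combined with the bi-Lipschitz pushforward under $\psi$.
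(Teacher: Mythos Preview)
The paper does not prove this proposition; it is stated as a classical result with references to Federer, Ambrosio--Fusco--Pallara, and Krantz--Parks for the proof. So there is no ``paper's own proof'' to compare against.

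Your overall strategy (reduction to the Euclidean coarea formula via a bi-Lipschitz parametrisation of each rectifiable piece) is the standard one, but the multiplicative chain-rule identity you invoke,
\[
  \Jbf_m(f \circ \psi)(u) = \Jbf^R_m f(\psi(u)) \cdot \Jbf^{\R^k}_k \psi(u),
\]
is \emph{false} when $k > m$. A concrete counterexample: take $k=2$, $m=1$, $d=2$, $\psi = \diag(2,1)$ (so $R = \R^2$ and $\Jbf^R_m f$ is the ordinary gradient norm), and $f(x,y) = x+y$. Then $\Jbf_1(f\circ\psi) = \abs{(2,1)} = \sqrt{5}$, while $\Jbf^R_1 f \cdot \Jbf_2 \psi = \sqrt{2} \cdot 2 = 2\sqrt{2}$. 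The Cauchy--Binet/SVD manipulation you allude to cannot rescue this: for an invertible $k\times k$ matrix $B$ and an $m\times k$ matrix $A$ one has $\det(ABB^T A^T) \neq \det(AA^T)\,(\det B)^2$ in general.

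The identity that \emph{does} hold, and that is actually needed to close your argument, involves the fibrewise Jacobian of the parametrisation: setting $K_u := \ker D(f\circ\psi)(u)$ and writing $\Jbf^{K_u}_{k-m}\psi(u)$ for the $(k-m)$-Jacobian of $D\psi(u)|_{K_u}$, one has
\[
  \Jbf^{K_u}_{k-m}\psi(u) \cdot \Jbf_m(f\circ\psi)(u) = \Jbf^R_m f(\psi(u)) \cdot \Jbf_k \psi(u).
\]
Your sketch already applies the area formula fibrewise on the right-hand side, which produces exactly the factor $\Jbf^{K_u}_{k-m}\psi$ inside the $y$-integral; combined with the Euclidean coarea formula this gives $\int_A \Jbf^{K_u}_{k-m}\psi \cdot \Jbf_m(f\circ\psi)\,\di\Lcal^k$. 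On the left-hand side the area formula for $\psi$ on $A$ gives $\int_A \Jbf^R_m f(\psi(u)) \cdot \Jbf_k\psi(u)\,\di\Lcal^k$. It is the four-term identity above, not the two-term factorisation you stated, that matches these.
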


\subsection{Integral currents} \label{sc:curr}

Let us now recall some notions from the theory of currents, see~\cite{Federer69book,KrantzParks08book} for details and proofs. Denote by $\Dcal^k(U)$ ($k \in \N \cup \{0\}$) the space of \term{(smooth) differential $k$-forms} with compact support in an open set $U \subset \R^d$ (the ambient dimension $d$ being fixed), that is, $\Dcal^k(U) := \Crm^\infty_c(U;\Wedge^k \R^d)$, where $\Crm^\infty_c(U;W)$ contains all smooth maps that take values in the finite-dimensional normed vector space $W$ and that are compactly supported in $U$. The exterior differential of $\omega \in \Dcal^k(U)$ is denoted by $d\omega \in \Dcal^{k+1}(U)$.

The elements of the dual space $\Dcal_k(U) := \Dcal^k(U)^*$ are called \term{$k$-currents}. We define the \term{boundary} of a $k$-current $T \in \Dcal_k(\R^d)$, where now $k \geq 1$, as the $(k-1)$-current $\partial T \in \Dcal_{k-1}(\R^d)$ determined via
\[
  \dprb{\partial T, \omega} := \dprb{T, d\omega}, \qquad \omega \in \Dcal^{k-1}(\R^d).
\]
For a $0$-current $T$, we formally set $\partial T := 0$. 

In this work we will only deal with restricted subclasses of currents, namely the following: A (local) Borel measure $T \in \Mcal_\loc(\R^d;\Wedge_k \R^d)$ is called an \term{integer-multiplicity rectifiable $k$-current} if it is of the form
\[
  T = m \, \vec{T} \, \Hcal^k \restrict R,
\]
that is,
\[
  \dprb{T,\omega} = \int_R \dprb{\vec{T}(x), \omega(x)} \, m(x) \dd \Hcal^k(x),  \qquad \omega \in \Dcal^k(\R^d),
\]
where
\begin{enumerate}[(i)]
	\item $R \subset \R^d$ is $\Hcal^k$-rectifiable with $\Hcal^k(R \cap K) < \infty$ for all compact sets $K \subset \R^d$;
	\item $\vec{T} \colon R \to \Wedge_k \R^d$ is $\Hcal^k$-measurable and for $\Hcal^k$-a.e.\ $x \in R$ the $k$-vector $\vec{T}(x)$ is simple, has unit length ($\abs{\vec{T}(x)} = 1$), and spans the approximate tangent space $\Tan_x R$ to $R$ at $x$;
	\item $m \in \Lrm^1_\loc(\Hcal^k \restrict R;\N)$;
\end{enumerate}
One calls $\vec{T}$ the \term{orientation map} of $T$ and $m$ the \term{multiplicity}.

We denote by $\tv{T} := m \, \Hcal^k \restrict R \in \Mcal^+_\loc(\R^d)$ the \term{total variation measure} of $T$, so that $T = \vec{T} \tv{T}$ is the Radon--Nikod\'{y}m decomposition of $T$ (considered as a measure). The \term{(global) mass} of $T$ is
\[
  \Mbf(T) := \tv{T}(\R^d)
  = \sup_{\substack{\omega \in \Dcal^k(\R^d)\\\abs{\omega} \leq 1}} \int_{\R^d} \dprb{\vec{T}, \omega} \dd \tv{T}
  = \int_R m(x) \dd \Hcal^k(x).
\]
The \term{support} $\supp T$ of $T$ is the support in the sense of measures.

Here and in all of the following, let $\Omega \subset \R^d$ be a bounded Lipschitz domain, i.e., open, connected and with a (strong) Lipschitz boundary. The members of the following sets are called \term{integral $k$-currents} ($k \in \N \cup \{0\}$):
\begin{align*}
  \Irm_k(\R^d) &:= \setb{ \text{$T$ integer-multiplicity rectifiable $k$-current} }{ \Mbf(T) + \Mbf(\partial T) < \infty }, \\
  \Irm_k(\cl{\Omega}) &:= \setb{ T \in \Irm_k(\R^d) }{ \supp T \subset \cl{\Omega} }.
\end{align*}
By the boundary rectifiability theorem, see~\cite[4.2.16]{Federer69book} or~\cite[Theorem~7.9.3]{KrantzParks08book}, for $T \in \Irm_k(\R^d)$ it holds that $\partial T \in \Irm_{k-1}(\R^d)$.  

\begin{remark} \label{rem:retract}
In Federer's language~\cite[4.1.29]{Federer69book}, $\cl{\Omega}$ is a compact \emph{Lipschitz neighborhood retract}, i.e., there exists a Lipschitz map that retracts some neighborhood of $\cl{\Omega}$ onto $\cl{\Omega}$. In fact, since $\Omega$ was assumed to be bounded and to have a (strong) Lipschitz boundary, one may proceed by observing that $\Omega$ is a Lipschitz manifold and thus a Lipschitz neighborhood retract, see, e.g.,~\cite[Theorem~5.13 and Remark~3.2~(3)]{LuukkainenVaisala77}).
\end{remark}

Let $T_1 = m_1 \, \vec{T}_1 \, \Hcal^{k_1} \restrict R_1 \in \Irm_{k_1}(\R^{d_1})$ and $T_2 = m_2 \, \vec{T}_2 \, \Hcal^{k_2} \restrict R_2 \in \Irm_{k_2}(\R^{d_2})$ with $R_1$ $k_1$-rectifiable (not just $\Hcal^{k_1}$-rectifiable) or $R_2$ $k_2$-rectifiable, so that the product set $R_1 \times R_2$ is $\Hcal^{k_1+k_2}$-rectifiable. Then, the \term{product current} of $T_1,T_2$ is
\[
  T_1 \times T_2 := m_1 m_2 \, (\vec{T}_1 \wedge \vec{T}_2) \, \Hcal^{k_1+k_2} \restrict (R_1 \times R_2) \in \Irm_{k_1+k_2}(\R^{d_1+d_2}).
\]
For its boundary we have the formula
\begin{equation} \label{eq:product_bdry}
  \partial(T_1 \times T_2) = \partial T_1 \times T_2 + (-1)^{k_1} T_1 \times \partial T_2.
\end{equation}
In particular, for $T_1 = \dbr{(0,1)}$, i.e., the canonical current associated to the interval $(0,1)$ (with orientation $+1$ and multiplicity $1$), and $T_2 = T \in \Irm_k(\R^d)$,
\[
  \partial(\dbr{(0,1)} \times T) = \delta_1 \times T - \delta_0 \times T - \dbr{(0,1)} \times \partial T,
\]
where $\delta_z$ as usual denotes the Dirac point mass at $z$, here understood as a $0$-dimensional integral current.

We also recall briefly the notion of pushforwards. Let $\theta \colon \cl{\Omega} \to \R^m$ be smooth and let $\theta|_{\supp T}$ be proper, i.e., $\theta^{-1}(K) \cap \supp T$ is compact for every compact $K \subset \R^m$. Further, let $T = m \, \vec{T} \, \Hcal^k \restrict R \in \Irm_k(\cl{\Omega})$. The \term{(geometric) pushforward} $\theta_* T$ (often denoted by \enquote{$\theta_\# T$} in the literature, but this can lead to confusion with the measure-theoretic pushforward, cf.~\cite[p.~32]{AmbrosioFuscoPallara00book}) is defined via
\[
  \dprb{\theta_* T,\omega} := \dprb{T, \theta^*\omega}, \qquad \omega \in \Dcal^k(\R^m),
\]
where $\theta^*\omega$ denotes the pullback of the $k$-form $\omega$. If $\theta$ is only Lipschitz continuous, then $\theta_* T$ is defined via the homotopy formula and a smoothing argument, see~\cite[Lemma~7.4.3]{KrantzParks08book}. It holds that $\theta_* T \in \Irm_k(\cl{\theta(\Omega)})$, see, for instance,~\cite[(3) on p.~197]{KrantzParks08book}. Denoting the approximate derivative of $\theta$ (which is defined almost everywhere) with respect to the $\Hcal^k$-rectifiable set $R$ by $D^R \theta$ (i.e., $D^R \theta(x)$ is the restriction of $D\theta(x)$ to $\Trm_x R$), we have
\begin{equation} \label{eq:pushforward}
  \dprb{\theta_* T, \omega} = \int \dprb{D^R\theta(\vec{T}(x)), \omega(\theta(x))} \dd \tv{T}(x), \qquad \omega \in \Dcal^k(\R^m).
\end{equation}
We note further that
\begin{equation} \label{eq:pushforward_bdry}
  \partial (\theta_* T) = \theta_*(\partial T).
\end{equation}

As convergence for integral currents we use the weak* convergence, i.e., we say that a sequence $(T_j) \subset \Irm_k(\R^d)$ \term{converges weakly*} to $T \in \Dcal_k(\R^d)$, in symbols \enquote{$T_j \toweakstar T$}, if
\[
  \dprb{T_j,\omega} \to \dprb{T,\omega} \qquad\text{for all $\omega \in \Dcal^k(\R^d)$.}
\]
Moreover, for $T \in \Irm_k(\R^d)$, the \term{(global, integral) Whitney flat norm} is given by
\[
  \Fbf(T) := \inf \, \setB{ \Mbf(Q) + \Mbf(R) }{ \text{$Q \in \Irm_{k+1}(\R^d)$, $R \in \Irm_k(\R^d)$ with $T = \partial Q + R$} }.
\]
Then, one can consider the \term{flat convergence} $\Fbf(T_j-T) \to 0$ for a sequence $(T_j) \subset \Irm_k(\R^d)$ as above. We quote two central results on the weak* convergence of integral $k$-currents:

First, in a bounded Lipschitz domain $\Omega \subset \R^d$, the weak* convergence is actually equivalent to the flat convergence under a uniform mass bound, see~\cite[Theorem~8.2.1]{KrantzParks08book}:

\begin{proposition} \label{prop:flat}
Let $(T_j) \subset \Irm_k(\cl{\Omega})$ with 
\[
  \sup_{j\in\N} \, \bigl( \Mbf(T_j) + \Mbf(\partial T_j) \bigr) < \infty.
\]
Then, $T_j \toweakstar T$ for some $T \in \Irm_k(\cl{\Omega})$ if and only if $\Fbf(T_j-T) \to 0$.
\end{proposition}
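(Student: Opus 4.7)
The plan is to handle the two directions separately: the implication $\Fbf \to 0 \Rightarrow \toweakstar$ is routine, while the converse requires the Federer--Fleming deformation theorem. For the easy direction, given decompositions $T_j - T = \partial Q_j + R_j$ with $\Mbf(Q_j) + \Mbf(R_j) \to 0$, for any $\omega \in \Dcal^k(\R^d)$ one estimates
\[
  \absb{\dprb{T_j-T,\omega}} \leq \norm{d\omega}_\infty \Mbf(Q_j) + \norm{\omega}_\infty \Mbf(R_j) \longrightarrow 0.
\]

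For the converse, fix $\epsilon > 0$ and apply the deformation theorem to each $U_j := T_j - T \in \Irm_k(\cl{\Omega})$: this yields $U_j = P_j + \partial Q_j + S_j$ with $P_j$ a polyhedral integer $k$-current supported on the $k$-skeleton of $\epsilon\Z^d$, and with the mass bounds $\Mbf(Q_j) + \Mbf(S_j) \leq C\epsilon$, $\Mbf(P_j)\leq C'$, $\Mbf(\partial Q_j) + \Mbf(\partial S_j) \leq C''$, all uniform in $j$ by the hypothesis. Since every support lies in a common bounded $\epsilon$-neighborhood of $\cl{\Omega}$, the chains $P_j$ are supported on a finite family of $k$-cells, and integrality of multiplicities combined with the uniform mass bound confines $P_j$ to a finite set of admissible polyhedral chains. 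Passing to a subsequence (not relabelled), $P_j$ stabilises to a fixed chain $P^\epsilon$. The crucial step is then to show that the \emph{flat norm} (not the mass) of $P^\epsilon$ is $O(\epsilon)$: the Federer--Fleming compactness theorem applied to the uniformly mass- and boundary-mass-bounded sequences $(Q_j), (S_j)$ produces, along a further subsequence, weak$^*$ limits $Q^\epsilon \in \Irm_{k+1}(\R^d)$ and $S^\epsilon \in \Irm_k(\R^d)$, and lower semicontinuity of mass yields $\Mbf(Q^\epsilon) + \Mbf(S^\epsilon) \leq C\epsilon$. Passing to the weak$^*$ limit in $P^\epsilon = U_j - \partial Q_j - S_j$ and using $U_j \toweakstar 0$ gives $P^\epsilon = -\partial Q^\epsilon - S^\epsilon$, whence $\Fbf(P^\epsilon) \leq 2C\epsilon$. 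Consequently, for $j$ large along the subsequence,
\[
  \Fbf(U_j) \leq \Fbf(P^\epsilon) + \Mbf(Q_j) + \Mbf(S_j) \leq 4C\epsilon.
\]
Since $\epsilon$ was arbitrary and the same argument applies to every subsequence, the subsequence principle yields $\Fbf(T_j - T) \to 0$ along the full sequence.

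I expect the chief obstacle to be resisting the tempting but wrong attempt to bound $\Mbf(P^\epsilon)$ directly: a large but topologically trivial polyhedral cycle has arbitrarily small flat norm yet non-negligible mass, so no mass bound on $P^\epsilon$ is available from the weak$^*$ convergence alone. The remedy, as above, is to recognise $P^\epsilon$ as $-\partial Q^\epsilon - S^\epsilon$ via compactness applied to the deformation residuals; this is the step that converts the weak$^*$ smallness of $U_j$ into a \emph{flat} smallness of $P^\epsilon$, and it is the technical heart of the argument.
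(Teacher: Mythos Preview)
The paper does not supply its own proof of this proposition; it is quoted as a known result with a reference to~\cite[Theorem~8.2.1]{KrantzParks08book}. Your argument is correct and is essentially the standard textbook proof (as in that reference, or in Simon's \emph{Lectures on Geometric Measure Theory}): the easy direction is immediate from the definition of $\Fbf$, and for the hard direction one applies the deformation theorem at scale $\epsilon$, uses integrality and the uniform mass bound to force the polyhedral parts $P_j$ into a finite set, stabilises along a subsequence, and then identifies the stabilised $P^\epsilon$ as $-\partial Q^\epsilon - S^\epsilon$ via compactness of the residuals to conclude $\Fbf(P^\epsilon) = \BigO(\epsilon)$. Your explicit remark that one must control $\Fbf(P^\epsilon)$ rather than $\Mbf(P^\epsilon)$ is exactly the point of the argument.
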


Second, compactness for integral currents is usually established via the Federer--Fleming compactness theorem, see~\cite[4.2.17]{Federer69book} or~\cite[Theorems~7.5.2,~8.2.1]{KrantzParks08book}:

\begin{proposition} \label{prop:FF}
Let $(T_j) \subset \Irm_k(\cl{\Omega})$ with 
\[
  \sup_{j\in\N} \, \bigl( \Mbf(T_j) + \Mbf(\partial T_j) \bigr) < \infty.
\]
Then, there is a subsequence (not relabeled) of $(T_j)$ and a $T \in \Irm_k(\cl{\Omega})$ such that $\Fbf(T_j-T) \to 0$ or, equivalently, $T_j \toweakstar T$. Moreover,
\begin{align*}
  \Mbf(T) &\leq \liminf_{j\to\infty} \, \Mbf(T_j), \\
  \Mbf(\partial T) &\leq \liminf_{j\to\infty} \, \Mbf(\partial T_j).
\end{align*}
\end{proposition}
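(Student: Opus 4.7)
The plan is to combine the deformation theorem with a Cantor diagonal argument across finer and finer polyhedral scales, together with the closure/boundary rectifiability theorems to identify the limit as integral.

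First, I would apply the deformation theorem at each scale $\eps > 0$ to produce, for each $T_j$, a polyhedral approximation $P_j^\eps$ supported on the $k$-skeleton of a cubical grid of mesh size $\eps$ in a bounded neighborhood of $\cl{\Omega}$, satisfying
\[
  \Fbf(T_j - P_j^\eps) \leq C\eps \bigl(\Mbf(T_j) + \Mbf(\partial T_j)\bigr), \qquad \Mbf(P_j^\eps) + \Mbf(\partial P_j^\eps) \leq C\bigl(\Mbf(T_j) + \Mbf(\partial T_j)\bigr).
\]
By the uniform mass bound hypothesis, all these quantities are uniformly bounded in $j$.

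Second, I would observe that for a fixed scale $\eps$, the grid has only finitely many $k$-cells intersecting a neighborhood of $\cl{\Omega}$, and the integer multiplicities assigned to those cells by $P_j^\eps$ are uniformly bounded (by a constant depending on $\eps$ and the mass bound). Therefore the collection $\{P_j^\eps\}_j$ is finite, so after passing to a subsequence the $P_j^\eps$ are equal to a single polyhedral chain $P^\eps$ for all large $j$. Carrying out a Cantor diagonal construction along a sequence $\eps_n \downarrow 0$, I extract a subsequence (not relabeled) $(T_j)$ such that for each $n$ the approximants $P_j^{\eps_n}$ stabilize to some $P^{\eps_n}$ as $j \to \infty$. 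A standard triangle-inequality computation in the flat norm,
\[
  \Fbf(T_i - T_j) \leq \Fbf(T_i - P^{\eps_n}) + \Fbf(P^{\eps_n} - T_j) \leq C\eps_n,
\]
for $i,j$ large, shows that the subsequence is Cauchy in the flat norm; since the space of integer-multiplicity rectifiable currents with bounded mass and bounded boundary mass in $\cl{\Omega}$ is complete under $\Fbf$, this produces a limit $T$.

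Third, I would verify that $T$ is integral. Lower semicontinuity of mass under flat convergence follows from the dual representation $\Mbf(T) = \sup\setb{\dprb{T,\omega}}{\omega \in \Dcal^k(\R^d),\ \abs{\omega} \leq 1}$ and the observation that each such pairing is continuous in the flat norm. This yields both mass bounds in the statement. The delicate point is establishing that $T$ is integer-multiplicity rectifiable, i.e.\ the \emph{closure theorem}: this is traditionally proved by a slicing argument that reduces the rectifiability and integer-multiplicity of $T$ to the $0$-dimensional case, where the claim is that an integer-valued $0$-current that is a flat limit of integer $0$-currents with bounded mass is itself an integer combination of Dirac masses. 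Finally, the boundary rectifiability theorem (already cited in Section~\ref{sc:curr}) upgrades $\partial T$ to lie in $\Irm_{k-1}(\cl{\Omega})$. Equivalence with weak* convergence is then immediate from Proposition~\ref{prop:flat}.

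The main obstacle is the closure theorem itself: the deformation and diagonal arguments give the existence of a flat limit routinely, but showing that this limit inherits integer multiplicity and rectifiability is the deep content and requires the slicing/structure-theoretic machinery of Federer--Fleming. In this paper, since both the deformation theorem and the closure/boundary rectifiability theorems are taken from the literature, the proposition is essentially a black-box application of them in the present setting of a bounded Lipschitz domain (which, as noted in Remark~\ref{rem:retract}, admits a Lipschitz retraction from a neighborhood, so that the ambient $\R^d$ arguments localize to $\cl{\Omega}$).
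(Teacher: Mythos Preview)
Your outline is a correct sketch of the standard Federer--Fleming argument, but there is nothing to compare it against: the paper does not prove Proposition~\ref{prop:FF}. It is stated purely as a quoted result, with the proof delegated to~\cite[4.2.17]{Federer69book} and~\cite[Theorems~7.5.2,~8.2.1]{KrantzParks08book}. Your closing remark already anticipates this --- in this paper the proposition is indeed a black box, and your deformation-plus-diagonal-plus-closure outline is essentially the route taken in those references.
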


\subsection{Slicing of integral currents} \label{sc:slice}

An integral current $S = m \, \vec{S} \, \Hcal^{k+1} \restrict R \in \Irm_{k+1}(\R^n)$ can be \enquote{sliced} with respect to the level sets of a Lipschitz map $f \colon \R^n \to \R$, see~\cite[Section~7.6]{KrantzParks08book} or~\cite[Section~4.3]{Federer69book}. For $\Lcal^1$-almost every $t \in \R$, the following statements hold:
\begin{enumerate}[(i)]
  \item The set $R|_t := f^{-1}(\{t\}) \cap R$ is (countably) $\Hcal^k$-rectifiable.
  \item For $\Hcal^{k+1}$-almost every $z \in R$, the approximate tangent spaces $\Trm_z R$ and $\Trm_z R|_t$, as well as the approximate gradient $\nabla^R f(z)$, i.e., the projection of $\nabla f(z)$ onto $\Trm_z R$, exist and
\[
  \Trm_z R = \spn \bigl\{ \Trm_z R|_t, \xi(z) \bigr\},  \qquad
  \xi(z) := \frac{\nabla^R f(z)}{\abs{\nabla^R f(z)}} \perp \Trm_z R|_t.
\]
  Moreover, $\xi(z)$ is simple and has unit length.
  \item With
\[
  m_+(z) := \begin{cases}
    m(z) &\text{if $\nabla^R f(z) \neq 0$,} \\
    0    &\text{otherwise,}  
  \end{cases}
  \qquad\qquad
  \xi^*(z) := \frac{D^R f(z)}{\abs{D^R f(z)}} \in \Wedge^1 \R^n,
\]
  where $D^R f(z)$ is the restriction of the differential $Df(z)$ to $\Trm_z R$, and
\[
  \vec{S}|_t(z) := \vec{S}(z) \restrict \xi^*(z) 
  \in \Wedge_k \Trm_z R|_t
  \subset \Wedge_k \Trm_z R,
\]
the \term{slice}
\[
  S|_t := m_+ \, \vec{S}|_t \, \Hcal^k \restrict R|_t
\]
  is an integral $k$-current, $S|_t \in \Irm_k(\R^n)$.
  \item The coarea formula for slices
\[
  \int_R g \, \abs{\nabla^R f} \dd \Hcal^{1+k} = \int \int_{R|_t} g \dd \Hcal^k \dd t
\]
  holds for all $g \colon R \to \R^N$ that are $\Hcal^{k+1}$-measurable and such that $g \, \abs{\nabla^R f}$ is $\Hcal^{1+k}$-integrable on $R$ or $g \geq 0$. In particular,
\[ \qquad
  \int_R \abs{\nabla^R f} \dd \tv{S} = \int \Mbf(S|_t) \dd t.
\]
  \item The cylinder formula
\begin{equation} \label{eq:cylinder}
  S|_t = \partial(S \restrict \{f<t\}) - (\partial S) \restrict \{f<t\}
\end{equation}
  and the boundary formula
\[
  \partial (S|_t) = - (\partial S)|_t
\]
  hold.
\end{enumerate}

\subsection{Approximation of integral currents}

Finally, we recall the following approximation result, which is proved in greater generality in~\cite[Theorem~1.2]{ChambolleFerrariMerlet21} based on~\cite{ColomboDeRosaMarcheseStuvard17}. To state it, we let $\mathrm{IP}_k(\cl{\Omega})$ be the set of $k$-dimensional \term{integral polyhedral chains} with support in $\cl{\Omega}$, that is, those $P \in \Irm_k(\cl{\Omega})$ that can be written in the form
\[
  P = \sum_{\ell = 1}^N p_\ell \dbr{\sigma_\ell},
\]
where the $\sigma_\ell$ are oriented convex $k$-polytopes ($\ell \in \{1,\ldots,N\}$), $\dbr{\sigma_\ell}$ denotes the integral $k$-current associated with $\sigma_\ell$ (with unit multiplicity), and $p_\ell \in \N$.

\begin{proposition} \label{prop:approx}
Let $T \in \Irm_k(\cl{\Omega})$ with $\partial T \in \mathrm{IP}_{k-1}(\cl{\Omega})$. Then, for every $\eps > 0$, there is $P \in \mathrm{IP}_k(\cl{\Omega'})$, where $\Omega' := \Omega + B(0,\eps)$, and $Q \in \Irm_{k+1}(\cl{\Omega'})$ such that
\[
  T = \partial Q + P
\]
with
\[
  \Mbf(Q) < \eps,  \qquad
  \Mbf(P) < \Mbf(T) + \eps.
\]
\end{proposition}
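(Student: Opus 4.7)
The plan is to adapt the Federer--Fleming deformation theorem, exploiting the polyhedral structure of $\partial T$ to avoid the usual remainder term, while sharpening the classical mass bound $\Mbf(P) \leq C_d\Mbf(T)$ to the near-optimal $\Mbf(P) \leq (1+\delta)\Mbf(T)$ via a probabilistic argument in the spirit of~\cite{ColomboDeRosaMarcheseStuvard17}. Fix a small mesh $\eta > 0$ (to be chosen in terms of $\eps$). First I would build a polyhedral cell complex $\Kcal$ of scale $\eta$ covering an $\eta$-neighborhood of $\cl{\Omega}$, subdivided so that $\supp \partial T$ lies inside the $(k-1)$-skeleton $\Kcal^{(k-1)}$. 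This is possible because $\partial T$ consists of finitely many $(k-1)$-polytopes. Then I would construct a Lipschitz retraction $\pi \colon \cl{\Omega'} \to \Kcal^{(k)}$ by iteratively projecting $(k+j)$-cells radially ($j = d-k, \ldots, 1$) onto their boundaries from well-chosen interior centers. Crucially, $\pi$ fixes $\Kcal^{(k-1)}$, hence also $\supp \partial T$.

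Set $P := \pi_* T$ and $Q := h_*(\dbr{(0,1)} \times T)$, where $h(s,x) := (1-s)x + s\,\pi(x)$ is the straight-line homotopy from $\id$ to $\pi$. Combining \eqref{eq:product_bdry} and \eqref{eq:pushforward_bdry} yields
\[
  \partial Q = \pi_* T - T - h_*\bigl(\dbr{(0,1)} \times \partial T\bigr).
\]
Because $h(s,x) = x$ for every $s \in [0,1]$ whenever $x \in \supp \partial T$, the differential $Dh$ annihilates the $s$-direction there, so the orientation of the pushforward $h_*(\dbr{(0,1)} \times \partial T)$ vanishes identically; hence this term is $0$ as a $k$-current, and $T = \partial Q + P$ exactly. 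Integrality and polyhedrality of $P$ follow since $\pi_* T$ is supported on $\Kcal^{(k)}$ with integer multiplicities inherited from $T$ that are piecewise constant on each top-dimensional face after summation.

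For the mass bounds, the length of $h$ is $O(\eta)$, so the standard Jacobian estimate applied through \eqref{eq:pushforward} gives $\Mbf(Q) \leq C_d\,\eta\, \Mbf(T) < \eps$ once $\eta$ is small. The delicate part, and the principal obstacle, is the near-optimal bound $\Mbf(P) < \Mbf(T) + \eps$, since the naive radial projection expands mass by a purely dimensional constant. Following~\cite{ColomboDeRosaMarcheseStuvard17}, for an affine $k$-plane $V$ inside a cube, the average over translations of the projection center through a small ball of the $k$-area expansion of the radial projection restricted to $V$ is $1 + O(\eta)$. Applying this locally, using the rectifiability of $T$ together with a Vitali-type covering argument to reduce to the affine case, yields the desired estimate. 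The crux is that this averaging must be compatible with the constraint $\Kcal^{(k-1)} \supset \supp \partial T$: the randomization must be restricted to cells whose closures are disjoint from $\supp \partial T$, while the contribution of cells meeting $\supp \partial T$ is controlled directly by $\eta\,\Mbf(\partial T)$ and thus becomes negligible as $\eta \to 0$.
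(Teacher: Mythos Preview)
Your approach is essentially the one the paper adopts by reference to~\cite{ChambolleFerrariMerlet21,ColomboDeRosaMarcheseStuvard17}: retract $T$ onto the $k$-skeleton of a fine complex whose $(k-1)$-skeleton contains $\supp\partial T$, take $Q$ via the homotopy formula (the $\partial T$-term vanishes because $\pi$ fixes $\supp\partial T$), and obtain the sharp bound $\Mbf(P)<\Mbf(T)+\eps$ by averaging over projection centers as in~\cite{ColomboDeRosaMarcheseStuvard17}. One caution on the $\Mbf(Q)$ estimate: it does not follow from ``the length of $h$ is $O(\eta)$'' alone, since the iterated radial retraction $\pi$ is not globally Lipschitz (its differential blows up near the chosen centers); you need the same good choice of centers that yields $\int |D^R\pi|^k \, d\tv{T} \le C\,\Mbf(T)$, but this is delivered by the very averaging argument you already invoke for $\Mbf(P)$, so both bounds come out of the same selection.
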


The important point here is that, unlike in the classical deformation theorem (see~\cite[4.2.9]{Federer69book} or~\cite[Section~7.7]{KrantzParks08book}), the mass estimate $\Mbf(P) < \Mbf(T) + \eps$ holds. We also refer to~\cite[Theorem~8.22]{FedererFleming60} for an earlier result in this direction; note that Proposition~\ref{prop:approx} is a deformation result (the difference $T-P$ is expressed as a boundary) and not merely an approximation theorem like~\cite[Corollary~8.23]{FedererFleming60} or~\cite[Theorem~4.2.24]{Federer69book}.

The proof is essentially contained in~\cite[Theorem~1.2]{ChambolleFerrariMerlet21} and~\cite[Proposition~2.7]{ColomboDeRosaMarcheseStuvard17} (for us, the easier argument of Section~1.2 in~\cite{ChambolleFerrariMerlet21} suffices). An inspection of this proof yields that if $T$ is an \emph{integral} current, then also $P$ is an \emph{integral} polyhedral chain ($p_\ell \in \N$). Moreover, also $Q$ (whose boundary is the difference between $T$ and $P$) is integral since it is constructed via the homotopy formula and the deformation theorem, both of which yield integral currents in the present situation.

\section{BV-theory of integral currents} \label{sc:BVcurr}

In~\cite{HudsonRindler21?,Rindler21b?} (also see~\cite{Kampschulte17PhD}) it is discussed at some length why it is beneficial to consider evolutions of integral $k$-currents to be identified with the space-time $(1+k)$-current \enquote{traced out} by the moving $k$-current. Below we will introduce the \enquote{variation} of this space-time current as the total traversed spatial area, but not letting opposite movements cancel each other. One noteworthy feature of the present theory is that at jump times the space-time currents contain also a notion of \enquote{jump transient} in their vertical pieces. In this sense, our theory is closer to the BV-liftings investigated in~\cite{JungJerrard04,RindlerShaw19} or cartesian currents~\cite{GiaquintaModicaSoucek98book1,GiaquintaModicaSoucek98book2} than to the classical theory of BV-maps~\cite{AmbrosioFuscoPallara00book}.

\subsection{Variation of space-time integral currents} \label{sc:Var}

In the following we will often work in the (Galilean) space-time $\R^{1+d} \cong \R \times \R^d$, where the first component takes the role of \enquote{time} and the remaining components take the role of \enquote{space}. The unit vectors in $\R^{1+d}$ are denoted by $\ee_0, \ee_1,\ldots,\ee_d$ with $\ee_0$ the \enquote{time} unit vector (pointing in the positive direction). It will be convenient to write the orthogonal projection onto the time component as $\tbf \colon \R^{1+d} \to \R \times \{0\}^d \cong \R$, $\tbf(t,x) := t$, and the orthogonal projection onto the space component as $\pbf \colon \R^{1+d} \to \{0\} \times \R^d \cong \R^d$, $\pbf(t,x) := x$. We also denote the linear extensions of these projections to multi-vectors by the same symbols.

Let $S \in \Irm_{1+k}([\sigma,\tau] \times \cl{\Omega})$, where $\sigma < \tau$. We define the \term{(space-time) variation} and \term{(space-time) boundary variation} of $S$ in the interval $I \subset [\sigma,\tau]$ via, respectively,
\begin{align} 
  \Var(S;I) &:= \int_{I \times \R^d} \abs{\pbf(\vec{S})} \dd \tv{S},  \label{eq:VarM} \\
  \Var(\partial S;I) &:= \int_{I \times \R^d} \abs{\pbf(\overrightarrow{\partial S})} \dd \tv{\partial S}.  \label{eq:VarM_bdry}
\end{align}
If $[\sigma,\tau] = [0,1]$, then we also write $\Var(S)$, $\Var(\partial S)$ for $\Var(S;[0,1])$, $\Var(\partial S;[0,1])$. Clearly, the variation is additive in the interval $I$, that is, for $\sigma \leq r < s < t \leq \tau$ it holds that
\begin{align*}
  \Var(S;[r,t)) &= \Var(S;[r,s)) + \Var(S;[s,t)), \\
  \Var(S;(r,t]) &= \Var(S;(r,s]) + \Var(S;(s,t]).
\end{align*}
Since $\abs{\pbf(\vec{S})} \leq 1$,
\begin{equation} \label{eq:VarM_est}
  \Var(S;I) \leq \Mbf(S \restrict (I \times \R^d)) \leq \Mbf(S)
\end{equation}
and likewise for the boundary variation. A reverse estimate will be given in Lemma~\ref{lem:mass} below.

Via the slicing theory of currents, for $\Lcal^1$-almost every $t \in [\sigma,\tau]$ we can define
\[
  S(t) := \pbf_*(S|_t) \in \Irm_k(\cl{\Omega}),
\]
where $S|_t \in \Irm_k([\sigma,\tau] \times \cl{\Omega})$ denotes the slice of $S$ with respect to time (i.e., with respect to $\tbf$). Note that if $S \in \Irm_{1+k}([\sigma,\tau] \times \cl{\Omega})$ has a \term{jump} at $t \in [\sigma,\tau]$, that is, $\tv{S}(\{t\} \times \R^d) > 0$, then $S|_t$ does not exist and the vertical piece $S \restrict (\{t\} \times \R^d)$ takes the role of a \enquote{jump transient}, i.e., the specific surface connecting the endpoints of the jump.

We also introduce the set of \term{integral $(1+k)$-currents with Lipschitz continuity}, or \term{Lip-integral $(1+k)$-currents}, as follows:
\begin{align*}
  \Irm^\Lip_{1+k}([\sigma,\tau] \times \cl{\Omega})
  := \setBB{ S \in \Irm_{1+k}([\sigma,\tau] \times \cl{\Omega}) }{ &\esssup_{t \in [\sigma,\tau]} \, \bigl( \Mbf(S(t)) + \Mbf(\partial S(t)) \bigr) < \infty, \\
  & \tv{S}(\{\sigma,\tau\} \times \R^d) = 0, \\
  &t \mapsto \Var(S;[\sigma,t]) \in \Lip([\sigma,\tau]), \\
  &t \mapsto \Var(\partial S;(\sigma,t)) \in \Lip([\sigma,\tau]) },
\end{align*}
where $\Lip([\sigma,\tau])$ denotes the space of scalar Lipschitz functions on the interval $[\sigma,\tau]$. We remark that there seems to be little point in defining a space like \enquote{$\BV([\sigma,\tau];\Irm_k(\cl{\Omega}))$} since by~\eqref{eq:VarM_est} and Lemma~\ref{lem:mass} below the mass and variation are comparable in the presence of a uniform mass bound on the slices.

Let us consider some examples to illustrate the above notions.

\begin{example} \label{ex:BV}
Let $u \in \BV([0,1])$ (see~\cite{AmbrosioFuscoPallara00book}) and define $S_u := \tau \, \Hcal^1 \restrict \graph(u)$ with
\[
  \graph(u) := \setb{ (t,u^\theta(t)) }{ t \in [0,1], \; \theta \in [0,1] },
\]
where $u^\theta(t) := (1-\theta)u^-(t) + \theta u^+(t)$ is the affine jump between the left and right limits $u^\pm(t) = u(t\pm)$ (which are equal to $u(t)$ if $t$ is a continuity point), and $\tau$ is the forward-pointing unit tangent to $\graph(u)$ (with $\tau \cdot \ee_0 \geq 0$). In this case, $\Var(S_u;I) = \Var(u;I) = \abs{Du}(I)$ for every interval $I \subset [0,1]$. This can be seen as follows: By a smoothing argument and Reshetnyak's continuity theorem (see, e.g.,~\cite[Theorem~2.39]{AmbrosioFuscoPallara00book}) we may without loss of generality assume that $u \in \Crm^1([0,1])$. Then,
\[
  \Var(u;I)
  = \int_I \abs{\dot{u}} \dd t
  = \int_{\graph(u) \cap (I \times \R)} \frac{\abs{\dot{u}}}{\sqrt{1+\abs{\dot{u}}^2}} \dd \Hcal^1 
  = \int_{\graph(u) \cap (I \times \R)} \abs{\pbf(\tau)} \dd \Hcal^1
  = \Var(S_u;I),
\]
where we used the area formula, Proposition~\ref{prop:area}. Clearly, $S_u \in \Irm^\Lip_{1+0}([0,1] \times \R)$ if and only if $u$ is Lipschitz. In this sense, the classical notions of BV- and Lipschitz-functions (with scalar values) constitute the $0$-dimensional case of our theory.
\end{example}

\begin{example} \label{ex:defo_cone}
Let $\Omega$ be star-shaped with vertex $p \in \Omega$, and let $T \in \Irm_k(\cl{\Omega})$. Define $H(t,x) := (1-t)p + tx$ and $\bar{H}(t,x) := (t,H(t,x))$. The \term{cone}
\[
  p \cone T := \bar{H}_* (\dbr{(0,1)} \times T) \in \Irm^\Lip_{1+k}([0,1] \times \cl{\Omega})
\]
satisfies $\partial(p \cone T) = \delta_1 \times T - p \cone \partial T$ (see~\eqref{eq:product_bdry},~\eqref{eq:pushforward_bdry}).
\end{example}

\begin{example}\label{ex:defo_Cantor}
Let $f_C \colon [0,1] \to [0,1]$ be the Cantor--Vitali function~\cite[Example~1.67]{AmbrosioFuscoPallara00book} and let $S_C$ be the \enquote{Cantor cone}, that is, the set in $\R^{1+2}$ obtained by rotating the graph of $f_C$ around the time axis. As the graph of $f_C$ is $1$-rectifiable (with length $2$), we get that $S_C$ is $2$-rectifiable. Hence, with a choice of orientation, $S_C \in \Irm_2(\R^3)$. Then, $S_C(t) \in \Irm_1(\R^2)$ for $t \in [0,1]$ is the circle lying around the origin with radius $f_C(t)$ and $\Var(S_C;[0,t]) = \pi f_C(t)^2$. Hence, $S_C \notin \Irm^\Lip_{1+1}([0,T] \times \R^2)$.
\end{example}

Like the classical variation, also our space-time variation is invariant with respect to time rescalings:

\begin{lemma} \label{lem:rescale}
Let $S \in \Irm_{1+k}([\sigma,\tau] \times \cl{\Omega})$ and let $a \in \Lip([\sigma,\tau])$ be injective. Then,
\[
  a_* S := [(t,x) \mapsto (a(t),x)]_* S \in \Irm_{1+k}(a([\sigma,\tau]) \times \cl{\Omega})
\]
with
\[
  (a_* S)(a(t)) = S(t),  \qquad t \in [\sigma,\tau],
\]
and
\begin{align*}
  \Var(a_* S;a([\sigma,\tau])) &= \Var(S;[\sigma,\tau]), \\
  \Var(\partial(a_* S);a([\sigma,\tau])) &= \Var(\partial S;[\sigma,\tau]), \\
  \esssup_{t \in a([\sigma,\tau])} \, \Mbf((a_* S)(t)) &= \esssup_{t \in [\sigma,\tau]} \, \Mbf(S(t)), \\
  \esssup_{t \in a([\sigma,\tau])} \, \Mbf(\partial(a_* S)(t)) &= \esssup_{t \in [\sigma,\tau]} \, \Mbf(\partial S(t)).
\end{align*}
If $S \in \Irm^\Lip_{1+k}([\sigma,\tau] \times \cl{\Omega})$, then also $a_* S \in \Irm^\Lip_{1+k}(a([\sigma,\tau]) \times \cl{\Omega})$.
\end{lemma}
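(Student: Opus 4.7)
The plan is to realize $a_*S$ as a Lipschitz pushforward under $\phi(t,x) := (a(t), x)$ and exploit that $D\phi$ equals $\diag(a'(t), \mathrm{Id}_d)$ at $\Lcal^1$-almost every $t$. First, since $a \in \Lip([\sigma,\tau])$ is injective, $\phi$ is Lipschitz and injective; hence $a_*S = \phi_*S$ is a well-defined integral $(1+k)$-current supported in $a([\sigma,\tau]) \times \cl{\Omega}$ by the pushforward theory of Section~\ref{sc:curr}, and $\partial(a_*S) = a_*(\partial S)$ follows from~\eqref{eq:pushforward_bdry}.

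The key observation driving the variation equalities is that $\pbf$ of the orientation is unchanged by $D\phi$. Writing, at $\tv{S}$-a.e.\ point,
\[
\vec{S} = \ee_0 \wedge \alpha + \beta, \qquad \alpha \in \Wedge_k \R^d, \quad \beta \in \Wedge_{1+k}\R^d,
\]
one has $\pbf(\vec{S}) = \beta$ (since $\pbf(\ee_0) = 0$). At any $t$ where $a$ is differentiable, $(D\phi)(\vec{S}) = a'(t)\,\ee_0 \wedge \alpha + \beta$, and therefore $\pbf((D\phi)\vec{S}) = \beta = \pbf(\vec{S})$. Substituting into the pushforward formula~\eqref{eq:pushforward} and applying the area formula (Proposition~\ref{prop:area}) to the injective map $\phi$ on the $(1+k)$-rectifiable carrier of $S$, a change of variables gives
\[
\Var(a_*S; a([\sigma,\tau])) = \int \abs{\pbf((D\phi)\vec{S})} \dd \tv{S} = \int \abs{\pbf(\vec{S})} \dd \tv{S} = \Var(S;[\sigma,\tau]).
\]
The corresponding identity for boundary variations is obtained by applying the same argument to $\partial S$ via $\partial(a_*S) = a_*(\partial S)$.

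For the slice assertions I would use the identity $\tbf \circ \phi = a \circ \tbf$ together with the fact that $a$ is a Lipschitz bijection of $[\sigma,\tau]$ onto $a([\sigma,\tau])$. A direct verification against the characterization of slices in Section~\ref{sc:slice} yields $(a_*S)|_{a(t)} = \phi_*(S|_t)$, and projecting by $\pbf_*$ (using $\pbf \circ \phi = \pbf$) gives $(a_*S)(a(t)) = S(t)$; the $\esssup$ identities for $\Mbf((a_*S)(t))$ and $\Mbf(\partial(a_*S)(t))$ are then immediate from the change of variables $s = a(t)$ together with the fact that the injective Lipschitz map $a$ and its inverse both send $\Lcal^1$-null sets to $\Lcal^1$-null sets. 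The $\Lip$-preservation statement is the most delicate step: setting $V(t) := \Var(S;[\sigma,t])$, the variation function of $a_*S$ equals $V \circ a^{-1}$, so its Lipschitz regularity reduces to Lipschitz regularity of $a^{-1}$. I expect this step tacitly presumes $a$ to be bi-Lipschitz—the natural meaning of a \enquote{Lipschitz time reparameterisation}—since one-sided Lipschitz regularity of $a$ (e.g.\ $a(t)=t^3$ on $[0,1]$) does not suffice to preserve Lipschitz regularity of the composition.
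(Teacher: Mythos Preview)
Your argument is correct and follows essentially the same route as the paper: both proofs hinge on the observation that $\pbf(D\phi[\vec{S}]) = \pbf(\vec{S})$ (you make this explicit via the decomposition $\vec{S} = \ee_0 \wedge \alpha + \beta$, the paper states it as ``$a$ only transforms the time coordinate''), and both then invoke the area formula so that the Jacobian factor cancels. Your treatment of the slices via $\tbf \circ \phi = a \circ \tbf$ and $\pbf \circ \phi = \pbf$ is also in the same spirit.

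Your reservation about the final $\Lip$-preservation claim is well taken. The paper's proof dismisses this step with ``then also clear'', but as you observe, if $V(t) := \Var(S;[\sigma,t])$ is Lipschitz then the variation function of $a_*S$ is $V \circ a^{-1}$, whose Lipschitz continuity indeed requires $a^{-1}$ to be Lipschitz; your counterexample $a(t)=t^3$ illustrates the failure otherwise. In the paper's actual applications of this lemma (the concatenation and reversal constructions, Lemmas~\ref{lem:concat} and~\ref{lem:reversal}, and the rescalings used later) the maps $a$ are affine with nonzero slope, hence bi-Lipschitz, so no harm is done there --- but you are right that the lemma as stated tacitly assumes bi-Lipschitz regularity of $a$ for the last conclusion.
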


\begin{proof}
If $S = m \, \vec{S} \, \Hcal^{1+k} \restrict R$ with a countably $(1+k)$-rectifiable set $R \subset \R^{1+d}$, we get (see, e.g.,~\cite[(3) on p.~197]{KrantzParks08book})
\[
  a_* S = m \circ a^{-1} \, \frac{(D^R a \circ a^{-1})[\vec{S} \circ a^{-1}]}{\abs{(D^R a \circ a^{-1})[\vec{S} \circ a^{-1}]}} \, \Hcal^k \restrict a(R),
\]
where here and in the following we identify $a$ with the space-time map $(t,x) \mapsto (a(t),x)$. Since $a$ only transforms the time coordinate,
\[
  \pbf((D^R a \circ a^{-1})[\vec{S} \circ a^{-1}]) = \pbf((Da \circ a^{-1})[\vec{S} \circ a^{-1}]) = \pbf(\vec{S} \circ a^{-1})
\]
and
\[
  \Jbf^R_k a = \abs{(D^R a)[\vec{S}]}.
\]
Hence,
\begin{align*}
  \Var(a_* S;a([\sigma,\tau]))
  &= \int_{a(R)} \absBB{\pbf \biggl( \frac{(D^R a \circ a^{-1})[\vec{S} \circ a^{-1}]}{\abs{(D^R a \circ a^{-1})[\vec{S} \circ a^{-1}]}} \biggr)} \; m \circ a^{-1} \dd \Hcal^k \\
  &= \int_{a(R)} \frac{\abs{\pbf(\vec{S} \circ a^{-1})}}{\abs{\Jbf^R_k a \circ a^{-1}}} \; m \circ a^{-1} \dd \Hcal^k \\
  &= \int_{R} \abs{\pbf(\vec{S})} \; m \dd \Hcal^k \\
  &= \Var(S;[\sigma,\tau]),
\end{align*}
where we used the area formula (Proposition~\ref{prop:area}). The equality for the boundary variation follows in the same way. The additional claim about Lip-integral currents is then also clear.
\end{proof}

\subsection{Pointwise variation and mass estimates} \label{sc:pVarF}

We now explore how our definition of variation relates to the variation with respect to the flat norm.

Let $\sigma < s < t < \tau$ such that $S(s),S(t)$ are defined for $S \in \Irm_{1+k}([\sigma,\tau] \times \R^d)$ (in particular, $\tv{S}(\{s\} \times \R^d) = \tv{S}(\{t\} \times \R^d) = 0$). Then, for
\[
  Q := \pbf_* [S \restrict ([s,t] \times \R^d)] \in \Irm_{k+1}(\cl{\Omega})
\]
it holds that (see the cylinder formula~\eqref{eq:cylinder})
\[
  \partial Q = \pbf_* \bigl[ \partial ( S \restrict ([s,t] \times \R^d)) \bigr]
  = S(t) - S(s) + \pbf_* \bigl[ (\partial S) \restrict ([s,t] \times \R^d) \bigr].
\]
Next, we observe
\begin{equation}  \label{eq:FVar_est}
  \Mbf(Q) = \sup_{\substack{\omega \in \Dcal^{1+k}(\R^d)\\\abs{\omega} \leq 1}} \int_{[s,t] \times \R^d} \dprb{\pbf(\vec{S}(t,x)), \omega(x)} \dd \tv{S}(t,x) 
  \leq \Var(S;[s,t]). 
\end{equation}
Also setting $R := -\pbf_* [(\partial S) \restrict ([s,t] \times \R^d)] \in \Irm_k(\cl{\Omega})$, we have $S(t) - S(s) = \partial Q + R$ and thus
\[
  \Fbf(S(t)-S(s)) \leq \Mbf(Q) + \Mbf(R)
  \leq \Var(S;[s,t]) + \Var(\partial S;[s,t])
\]
since $Q,R$ are admissible in the definition of $\Fbf(S(t)-S(s))$. From this we immediately obtain for the \term{pointwise $\Fbf$-variation}
\[
  \FpV(S;[s,t]) := \sup \setBB{ \sum_{\ell = 1}^N \Fbf(S(t_{\ell-1})-S(t_\ell)) }{ \sigma = t_0 < t_1 < \cdots t_N = \tau, \; \text{$S(t_\ell)$ defined} }
\]
the estimates
\begin{align*}
  \FpV(S;[s,t]) &\leq \Var(S;[s,t]) + \Var(\partial S;[s,t]), \\
  \FpV(\partial S;[s,t]) &\leq \Var(\partial S;[s,t]).
\end{align*}
Thus, $t \mapsto S(t)$ and $t \mapsto \partial S(t)$ are functions of bounded (pointwise) variation with respect to $\Fbf$.

Assume now additionally a uniform bound on $\Mbf(S(t)) + \Mbf(\partial S(t))$ for $t \in [\sigma,\tau]$. One then obtains, using the Federer--Fleming compactness theorem, Proposition~\ref{prop:FF}, that at every $t \in [\sigma,\tau]$ the left and right limits exist with respect to weak* convergence in $\Irm_k(\cl{\Omega})$ (only one-sided limits at $\sigma,\tau$). Indeed, for instance, if there were sequences $t_j \todown t$ and $\tilde{t}_j \todown t$ with $0 < \delta < \Fbf(S(t_j)-S(\tilde{t}_j))$ for all $j$, then, up to selecting a subsequence, $\infty = \sum_j \Fbf(S(t_j)-S(\tilde{t}_j)) \leq \FpV(S;[t,\tau]) < \infty$, which is a contradiction; likewise for left limits.

Thus, we may define the right-continuous \term{good representative} $\tilde{S} \colon [\sigma,\tau) \to \Irm_k(\cl{\Omega})$ of $S$ for any $t \in [\sigma,\tau)$ as
\[
  \tilde{S}(t) := S(t+) = \wslim_{s \todown t} S(s)  \quad\text{in $\Irm_k(\cl{\Omega})$,}
\]
which satisfies $\tilde{S}(t) = S(t)$ for $\Lcal^1$-almost every $t \in [\sigma,\tau)$. In the following we will drop the tilde and just refer to $\tilde{S}(t)$ as $S(t)$.

From the above arguments we further obtain the following $\Fbf$-Poincar\'{e} inequality for the good representative:
\begin{equation} \label{eq:PoincareF}
  \Fbf(S(s) - S(t)) \leq \FpV(S;[s,t]) \leq \Var(S;[s,t]) + \Var(\partial S;[s,t]),  \qquad s,t \in [\sigma,\tau].
\end{equation}
Here we have additionally set $S(\tau) := S(\tau-)$. This implies in particular that if $t \in [\sigma,\tau]$ is a continuity point of $t \mapsto \Var(S;[\sigma,t])$ and $t \mapsto \Var(\partial S;[\sigma,t])$, then $t$ is also a (weak*) continuity point of $t \mapsto S(t)$, that is, $S(t-) = S(t+)$. Note, however, that the inequality~\eqref{eq:PoincareF} is too weak to give a uniform mass bound on $t \mapsto S(t)$ in terms of the variation.

If even $S \in \Irm^\Lip_{1+k}([\sigma,\tau] \times \cl{\Omega})$, then the $\Fbf$-Lipschitz constant
\[
  L := \sup_{s,t \in [\sigma,\tau]} \frac{\Fbf(S(s)-S(t))}{\abs{s-t}}
\]
of (the good representative of) $S$ is finite and $t \mapsto S(t)$ is continuous with respect to the weak* convergence in $\Irm_k(\cl{\Omega})$. Moreover,
\begin{equation} \label{eq:partialS_trace}
  \partial S \restrict (\{\sigma,\tau\} \times \R^d) = \delta_\tau \times S(\tau-) - \delta_\sigma \times S(\sigma+),
\end{equation}
which can be seen by considering $\tilde{S} := S - \dbr{(-\infty,\sigma)} \times \pbf_*(\partial S \restrict (\{\sigma\} \times \R^d)) + \dbr{(\tau,\infty)} \times \pbf_*(\partial S \restrict (\{\tau\} \times \R^d))$ and using that $\tv{S}(\{\sigma,\tau\} \times \R^d) = 0$ to see that $t \mapsto \tilde{S}(t)$ has $\sigma,\tau$ as (weak*) continuity points. In conclusion, $S(\sigma+), S(\tau-)$ can be considered the left and right \term{trace values} of $S$.

It is important to notice that, in general, $\FpV(S;[\sigma,\tau])$ is strictly smaller than $\Var(S;[\sigma,\tau]) + \Var(\partial S;[\sigma,\tau])$ since $\FpV(S;[\sigma,\tau])$ always counts the jump variations via the $\Fbf$-distance between the jump endpoints $S(t\pm)$. On the other hand, as we have mentioned already, a $(1+k)$-surface in $\Irm_{1+k}([\sigma,\tau] \times \cl{\Omega})$ always additionally specifies the \emph{jump transients}, which may not be $\Fbf$-minimal.

The next \enquote{Pythagoras} lemma gives an estimate for the mass of an integral $(1+k)$-current in terms of the masses of the slices and the variation.

\begin{lemma} \label{lem:mass}
Let $S = m \, \vec{S} \, \Hcal^{1+k} \restrict R \in \Irm_{1+k}([\sigma,\tau] \times \cl{\Omega})$. Then,
\begin{equation} \label{eq:decomp}
  \abs{\nabla^R \tbf}^2 + \abs{\pbf(\vec{S})}^2 = 1  \qquad \text{$\tv{S}$-a.e.}
\end{equation}
and
\begin{align*}
  \Mbf(S) &\leq \int_\sigma^\tau \Mbf(S(t)) \dd t + \Var(S;[\sigma,\tau]) \\
  &\leq \abs{\sigma-\tau} \cdot \esssup_{t \in [\sigma,\tau]} \, \Mbf(S(t)) + \Var(S;[\sigma,\tau]).
\end{align*}
\end{lemma}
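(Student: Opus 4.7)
The plan is to prove the pointwise identity~\eqref{eq:decomp} first and then use it together with the coarea formula to bound $\Mbf(S)$.

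For the identity, I would work at a $\tv{S}$-almost every point $x \in R$ where all approximate tangent objects from the slicing theory of Section~\ref{sc:slice} exist. If $\nabla^R \tbf(x) \ne 0$, I use the decomposition $\Trm_x R = \spn\{\Trm_x R|_t, \xi(x)\}$ with $\xi(x) = \nabla^R \tbf(x)/\abs{\nabla^R \tbf(x)}$ a unit vector orthogonal to $\Trm_x R|_t$, the latter space lying entirely inside $\{0\}\times\R^d$. Picking an orthonormal basis $u_1,\dots,u_k$ of $\Trm_x R|_t$, the simple unit $(1+k)$-vector $\vec S(x)$ must equal $\pm\,\xi(x)\wedge u_1\wedge\cdots\wedge u_k$. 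Writing $\xi(x)=\alpha\ee_0+\beta$ with $\beta\in\{0\}\times\R^d$ and $\alpha^2+\abs\beta^2=1$, the projection of $\ee_0=\nabla\tbf$ onto $\Trm_x R$ is $\alpha\,\xi(x)$ (since $\ee_0\perp u_i$), so $\abs{\nabla^R\tbf(x)}=\abs\alpha$. On the other hand, $\pbf(\vec S(x))=\pm\,\beta\wedge u_1\wedge\cdots\wedge u_k$, and since $\beta\perp u_i$ (because $\xi\perp u_i$ and $\ee_0\perp u_i$), this wedge product has norm exactly $\abs\beta$. Hence $\abs{\nabla^R\tbf}^2+\abs{\pbf(\vec S)}^2=\alpha^2+\abs\beta^2=1$. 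The degenerate case $\nabla^R \tbf(x)=0$ means $\Trm_x R\subset\{0\}\times\R^d$, so $\vec S(x)$ is purely spatial and $\abs{\pbf(\vec S(x))}=\abs{\vec S(x)}=1$, and the identity still holds.

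Once~\eqref{eq:decomp} is established, I use the elementary fact that for any $a,b\in[0,1]$ with $a^2+b^2=1$ one has $a+b\ge a^2+b^2=1$, applied $\tv S$-a.e.\ with $a=\abs{\nabla^R\tbf}$ and $b=\abs{\pbf(\vec S)}$, to obtain
\[
  \Mbf(S)=\int_R m\dd\Hcal^{1+k}\le \int_R m\,\abs{\nabla^R\tbf}\dd\Hcal^{1+k}+\int_R m\,\abs{\pbf(\vec S)}\dd\Hcal^{1+k}.
\]
The second term is $\Var(S;[\sigma,\tau])$ by definition~\eqref{eq:VarM}. For the first, I apply the coarea formula (Proposition~\ref{prop:coarea}) with $f=\tbf$ and $g=m$ to get $\int_R m\abs{\nabla^R\tbf}\dd\Hcal^{1+k}=\int_\sigma^\tau\int_{R|_t}m\dd\Hcal^k\dd t=\int_\sigma^\tau\Mbf(S|_t)\dd t$. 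Since $R|_t\subset\{t\}\times\R^d$ and $\pbf$ restricted to this hyperplane is an isometry, the tangent-level Jacobian $\Jbf^{R|_t}_k\pbf\equiv1$, and therefore $\Mbf(S(t))=\Mbf(\pbf_*(S|_t))=\Mbf(S|_t)$ for a.e.\ $t$. Putting these together gives the first asserted inequality, and the second one follows trivially by bounding $\Mbf(S(t))$ by its essential supremum.

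The main technical point is the pointwise identity~\eqref{eq:decomp}; once that orthogonality bookkeeping is done, the mass estimate is a direct application of the coarea formula combined with the isometric nature of the spatial projection on horizontal slices.
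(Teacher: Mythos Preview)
Your proof is correct and follows essentially the same route as the paper: decompose $\vec{S}$ at a generic point as $\xi\wedge(\text{spatial $k$-vector})$, observe that $\abs{\nabla^R\tbf}=\abs{\tbf(\xi)}$ and $\abs{\pbf(\vec{S})}=\abs{\pbf(\xi)}$, conclude~\eqref{eq:decomp} from $\abs{\tbf(\xi)}^2+\abs{\pbf(\xi)}^2=1$, and then combine the coarea formula for slices with the definition of the variation to bound $\Mbf(S)$. You are in fact slightly more explicit than the paper on two points: you treat the degenerate case $\nabla^R\tbf=0$ separately, and you spell out why $\Mbf(S(t))=\Mbf(\pbf_*(S|_t))=\Mbf(S|_t)$ (the paper's proof ends with $\Mbf(S|_t)$ rather than $\Mbf(S(t))$, leaving this identification implicit).
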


\begin{proof}
Let us first recall that for $\Lcal^1$-almost every $t \in [0,T]$ and $\tv{S|_t}$-almost every $(t,x)$ the approximate tangent spaces $\Trm_{(t,x)} R, \Trm_{(t,x)} R|_t$ as well as the approximate differential $D^R \tbf(t,x)$ and the approximate gradient $\nabla^R \tbf(t,x)$ exist (see Section~\ref{sc:slice}). Moreover,
\[
  \Trm_{(t,x)} R = \spn \bigl\{ \Trm_{(t,x)} R|_t, \xi(t,x) \bigr\},  \qquad
  \xi(t,x) := \frac{\nabla^R \tbf(t,x)}{\abs{\nabla^R \tbf(t,x)}} \perp \Trm_{(t,x)} R|_t.
\]
Thus, with
\[
  \xi^*(x) := \frac{D^R \tbf(t,x)}{\abs{D^R \tbf(t,x)}} \in \Wedge^1 \R^{1+d},
\]
we have
\[
  \vec{S}|_t = \vec{S} \restrict \xi^*,  \qquad
  \vec{S} = \xi \wedge \vec{S}|_t.
\]
The second equality here follows from the general relation $\xi \wedge (\tau\restrict \xi^*) = \tau$ for any $\tau \in \Wedge_{1+k} \R^{1+d}$ with $\xi \wedge \tau = 0$ (see~\cite[1.5.3]{Federer69book}).

In the following we fix $t,x$ as above and suppress the arguments $(t,x)$. We observe that (note $\abs{\xi} = 1$)
\[
  \abs{\nabla^R \tbf}
  = \abs{(\ee_0 \cdot \xi)\xi}
  = \abs{\xi \cdot \ee_0}
  = \abs{(\xi \cdot \ee_0)\ee_0}
  = \abs{\tbf(\xi)}
\]
and (recall $\pbf(\xi) \perp \vec{S}|_t$)
\[
  \abs{\pbf(\vec{S})}
  = \abs{\pbf(\xi) \wedge \vec{S}|_t}
  = \abs{\pbf(\xi)}.
\]
Since $\abs{\tbf(\xi)}^2 + \abs{\pbf(\xi)}^2 = 1$, we obtain~\eqref{eq:decomp}. Then, 
\begin{align*}
  \Mbf(S) &= \int_R \sqrt{\abs{\nabla^R \tbf}^2 + \abs{\pbf(\vec{S})}^2}  \dd \tv{S} \\
  &\leq \int_R \abs{\nabla^R \tbf} + \abs{\pbf(\vec{S})}  \dd \tv{S} \\
  &=\int_\sigma^\tau \Mbf(S|_t) \dd t + \Var(S;[\sigma,\tau]),
\end{align*}
where in the last line we have used the coarea formula for slices and the definition of the variation. This yields the second claim.
\end{proof}

\begin{example} \label{ex:MLinfty_necessary}
Let $R \in \Irm_k(\R^d)$ be an integral $k$-current, $k \geq 1$, with $\Mbf(R) = 1$, but $\Mbf(\partial R) = N \in \N$ (e.g., a disk with \enquote{rough} boundary). Then, if $p$ lies in the relative interior of $R$, we define (recalling the definition of the cone in Example~\ref{ex:defo_cone})
\[
  S := - \partial (p \cone R) = p \cone \partial R - \delta_1 \times R. 
\]
One computes that $\Var(S;[0,1]) = 2$ (the cone and the endpoint cap each have variation $\Mbf(R) = 1$) and $\Var(\partial S;[0,1]) = 0$. On the other hand, $\Mbf(S(t)) = tN$ for almost every $t \in [0,1]$. This shows that $\esssup_{t \in [0,1]} \Mbf(S(t))$ is not controlled by any expression involving only $\Var(S;[0,1])$ and $\Var(\partial S;[0,1])$ besides constants, unless $k = 0$ and we are in the case of BV-maps and every slice is a Dirac point mass (see Example~\ref{ex:BV}).
\end{example}

\subsection{Weak* convergence and compactness}

We say that $(S_j) \subset \Irm_{1+k}([\sigma,\tau] \times \cl{\Omega})$ \term{converges BV-weakly*} to $S \in \Irm_{1+k}([\sigma,\tau] \times \cl{\Omega})$ as $j \to \infty$, in symbols \enquote{$S_j \toweakstar S$ in BV}, if
\[
  \left\{ \begin{aligned}
    S_j &\toweakstar S        &&\text{in $\Irm_{1+k}([\sigma,\tau] \times \cl{\Omega})$,} \\
    S_j(t) &\toweakstar S(t)  &&\text{in $\Irm_k(\cl{\Omega})$ for $\Lcal^1$-almost every $t \in [\sigma,\tau]$.}
  \end{aligned} \right.
\]
For this convergence we have the following version of Helly's selection principle:

\begin{theorem}[Compactness theorem] \label{thm:current_Helly}
Assume that the sequence $(S_j) \subset \Irm_{1+k}([\sigma,\tau] \times \cl{\Omega})$ satisfies
\[
  \esssup_{t \in [\sigma,\tau]} \, \bigl( \Mbf(S_j(t)) + \Mbf(\partial S_j(t)) \bigr) + \Var(S_j;[\sigma,\tau]) + \Var(\partial S_j;[\sigma,\tau]) \leq C < \infty
\]
for all $j \in \N$. Then, there exists $S \in \Irm_{1+k}([\sigma,\tau] \times \cl{\Omega})$ and a (not relabelled) subsequence such that
\[
  S_j \toweakstar S  \quad\text{in BV.}
\]
Moreover,
\begin{align*}
  \esssup_{t \in [\sigma,\tau]} \, \Mbf(S(t)) &\leq \liminf_{j \to \infty} \, \; \esssup_{t \in [\sigma,\tau]} \, \Mbf(S_j(t)), \\
  \esssup_{t \in [\sigma,\tau]} \, \Mbf(\partial S(t)) &\leq \liminf_{j \to \infty} \, \; \esssup_{t \in [\sigma,\tau]} \, \Mbf(\partial S_j(t)), \\
  \Var(S;[\sigma,\tau]) &\leq \liminf_{j \to \infty} \, \Var(S_j;[\sigma,\tau]), \\
  \Var(\partial S;(\sigma,\tau)) &\leq \liminf_{j \to \infty} \, \Var(\partial S_j;(\sigma,\tau)).
\end{align*}
If additionally $(S_j) \subset \Irm^\Lip_{1+k}([\sigma,\tau] \times \cl{\Omega})$ such that the Lipschitz constants $L_j$ of the scalar maps $t \mapsto \Var(S_j;[\sigma,t]) + \Var(\partial S_j;(\sigma,t))$ are uniformly bounded, then also $S \in \Irm^\Lip_{1+k}([\sigma,\tau] \times \cl{\Omega})$ with Lipschitz constant bounded by $\liminf_{j\to\infty} L_j$. Moreover, in this case, $S_j(t) \toweakstar S(t)$ in $\Irm_k(\cl{\Omega})$ for \emph{every} $t \in [\sigma,\tau)$.
\end{theorem}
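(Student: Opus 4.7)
The plan is to reduce the BV-weak* compactness to three classical mechanisms: the Federer--Fleming theorem applied to $(S_j)$ and $(\partial S_j)$ as space-time currents, the scalar Helly selection principle applied to the monotone variation functions in time, and the continuity of slicing under flat convergence to identify the pointwise slice limit with the time-slice of the space-time weak* limit. Lower semicontinuity of the variations will then follow from Reshetnyak-type arguments since $f(\xi) := \abs{\pbf(\xi)}$ is convex, continuous, and $1$-homogeneous on $(1+k)$-vectors.

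First I would use Lemma~\ref{lem:mass}, applied both to $S_j$ and to $\partial S_j$ (whose spatial slices have masses $\Mbf(\partial S_j(t))$ that are uniformly controlled by hypothesis), to derive the uniform mass bounds $\Mbf(S_j) + \Mbf(\partial S_j) \leq (\tau-\sigma)C + 2C$. Proposition~\ref{prop:FF} then delivers a subsequence with $S_j \toweakstar S$ in $\Irm_{1+k}([\sigma,\tau]\times\cl{\Omega})$, with flat convergence by Proposition~\ref{prop:flat}, and $\partial S_j \toweakstar \partial S$ automatic. The lower semicontinuity of $\Var(S_j;[\sigma,\tau])$ and $\Var(\partial S_j;(\sigma,\tau))$ follows by writing these as suprema $\sup\setb{ \dprb{S,\omega} }{ \omega \in \Dcal^{1+k},\ \abs{\omega}_* \leq 1 }$ against test forms admissible for the polar of $f$, and using weak* convergence of currents.

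For the pointwise-in-time claims I would proceed as follows. The scalar functions $V_j(t) := \Var(S_j;[\sigma,t]) + \Var(\partial S_j;(\sigma,t))$ are monotone and uniformly bounded, so by classical Helly selection, after passing to a further subsequence, $V_j \to V$ pointwise for some monotone $V$ with at-most countable discontinuity set $J \subset [\sigma,\tau]$. Working with the right-continuous good representatives, the $S_j(t)$ are uniformly mass-bounded, so a diagonal extraction over a countable dense $D \subset [\sigma,\tau]\setminus J$ together with Proposition~\ref{prop:FF} produces $\Fbf$-convergence $S_j(t) \to \tilde{S}(t)$ for every $t \in D$. The $\Fbf$-Poincaré inequality~\eqref{eq:PoincareF},
\[
  \Fbf(S_j(t)-S_j(s)) \leq V_j(t)-V_j(s), \qquad s<t,\ s,t\in[\sigma,\tau]\setminus J,
\]
combined with the pointwise convergence of $V_j$ yields a Cauchy property that extends $\Fbf$-convergence to every $t\in[\sigma,\tau]\setminus J$. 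To match $\tilde{S}(t)$ with $S(t)$, I invoke the classical continuity of slicing under flat convergence applied to the map $\tbf$: for $\Lcal^1$-a.e.\ $t$ one has $S_j|_t \to S|_t$ flatly, and pushforward by $\pbf$ gives $S_j(t) \to S(t)$, hence $\tilde{S}(t) = S(t)$ almost everywhere. The $\esssup$-bounds on $\Mbf(S(t))$ and $\Mbf(\partial S(t))$ then follow from the lower semicontinuity of mass at any slice-convergence time. In the Lipschitz case, a uniform Lipschitz bound on $V_j$ transfers to $V$, so $J = \emptyset$; the Cauchy argument then yields $S_j(t) \toweakstar S(t)$ at every $t \in [\sigma,\tau)$, and the Lipschitz constant of $t \mapsto \Var(S;[\sigma,t])+\Var(\partial S;(\sigma,t))$ is controlled by $\liminf_j L_j$ through lower semicontinuity on each subinterval $[s,t]$.

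The main obstacle I anticipate is the compatibility between the two selection procedures and the identification $\tilde{S}(t) = S(t)$. This rests on the continuity of slicing under flat convergence, which is only valid at $\Lcal^1$-a.e.\ $t$; the $\Fbf$-Poincaré inequality together with the good right-continuous representative framework is what allows that a.e.\ information to be upgraded to every time outside the countable set $J$, and is what makes the Lipschitz refinement work for \emph{every} $t \in [\sigma,\tau)$.
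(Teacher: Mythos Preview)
Your argument is correct, and the overall architecture (Lemma~\ref{lem:mass} for mass bounds, Federer--Fleming for the space-time limit, Reshetnyak for lower semicontinuity of the variations) matches the paper. The route to pointwise slice convergence, however, is genuinely different. The paper bypasses scalar Helly and the diagonal/Poincar\'e machinery entirely: it takes a further subsequence so that $\tv{S_j}+\tv{\partial S_j}\toweakstar\nu$, and then uses the cylinder formula $S|_t=\partial(S\restrict\{\tbf<t\})-(\partial S)\restrict\{\tbf<t\}$ together with the elementary fact that restrictions to half-spaces $\{\tbf<t\}$ pass to the limit whenever $\nu(\{t\}\times\R^d)=0$. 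This immediately gives $S_j|_t\toweakstar S|_t$ for all but countably many $t$, which is already the a.e.\ slice convergence you need; in the Lipschitz case, Lemma~\ref{lem:mass} yields $\tv{S_j}([s,t]\times\R^d)\leq(C+L^*)\abs{s-t}$, so $\nu$ charges no time-slice and the same argument works at \emph{every} $t\in[\sigma,\tau)$. Your approach layers scalar Helly on $V_j$, a diagonal extraction on a dense set, and the $\Fbf$-Poincar\'e inequality~\eqref{eq:PoincareF} to build $\tilde S(t)$, and then still invokes slicing continuity under flat convergence (essentially the paper's argument) as a black box to identify $\tilde S(t)=S(t)$ a.e. This is heavier for the basic statement, where the Helly/Poincar\'e step is redundant, but it does give a conceptually clean ``metric'' proof of the Lipschitz refinement that stays closer to classical BV arguments and avoids re-examining the measure $\nu$.
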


\begin{proof}
From the assumptions we infer a uniform bound on the masses $\Mbf(S_j)$ and $\Mbf(\partial S_j)$ via Lemma~\ref{lem:mass}. Then, the first convergence $S_j \toweakstar S$ in $\Irm_{1+k}([\sigma,\tau] \times \cl{\Omega})$, up to selecting a subsequence, follows directly from the Federer--Fleming compactness theorem in $\Irm_{1+k}([\sigma,\tau] \times \cl{\Omega})$, see Proposition~\ref{prop:FF}.

By the cylinder formula~\eqref{eq:cylinder},
\[
  S|_t = \partial(S \restrict \{\tbf<t\}) - (\partial S) \restrict \{\tbf<t\}
\]
and likewise for $S_j|_t$. If $\tv{S_j} + \tv{\partial S_j} \toweakstar \nu$ in $\Mcal^+([\sigma,\tau] \times \R^d)$ (for a subsequence), then standard results in measure theory (see, e.g.,~\cite[Theorem~1.62~(b)]{AmbrosioFuscoPallara00book}) imply that for all $t \in [\sigma,\tau]$ with $\nu(\{t\} \times \R^d) = 0$ it holds that
\[
  \dprb{S_j,\ONE_{\{\tbf<t\}} \wedge d\omega} \to \dprb{S,\ONE_{\{\tbf<t\}} \wedge d\omega},  \qquad
  \dprb{\partial S_j,\ONE_{\{\tbf<t\}} \wedge \omega} \to \dprb{\partial S,\ONE_{\{\tbf<t\}} \wedge \omega}.
\]
for all $\omega \in \Dcal^k(\R^{1+d})$. Thus, for these $t$,
\[
  \partial \bigl( S_j \restrict \{\tbf<t\} \bigr) \toweakstar \partial \bigl( S \restrict \{\tbf<t\} \bigr), \qquad
  \partial S_j \restrict \{\tbf<t\} \toweakstar \partial S \restrict \{\tbf<t\}.
\]
Since there are only at most countably many $t$'s with $\nu(\{t\} \times \R^d) > 0$, we obtain that 
\[
  S_j|_t \toweakstar S|_t  \qquad\text{for $\Lcal^1$-almost every $t$.}
\]
This shows the second convergence $S_j(t) \toweakstar S(t)$ in $\Irm_k(\cl{\Omega})$ for $\Lcal^1$-almost every $t \in [\sigma,\tau]$. 

The lower semicontinuity of the mass and variation follow in the usual way from the weak* convergences. Indeed, the variation $\Var(S_j;[\sigma,\tau])$ is lower semicontinuous by Reshetnyak's lower semicontinuity theorem (see, for instance,~\cite[Theorem~2.38]{AmbrosioFuscoPallara00book}), i.e.,
\begin{align*}
  \Var(S;[\sigma,\tau]) &= \int_{[\sigma,\tau] \times \Omega} \abs{\pbf(\vec{S})} \dd \tv{S} \\
  &\leq \liminf_{j \to \infty} \int_{[\sigma,\tau] \times \Omega} \abs{\pbf(\vec{S}_j)} \dd \tv{S_j} \\
  &= \liminf_{j \to \infty} \, \Var(S_j;[\sigma,\tau])
\end{align*}  
since the integrand $\abs{\pbf(\frarg)}$ is positively $1$-homogeneous, convex (as the composition of a convex and a linear map on $\Wedge_{1+k} \R^{1+d}$), and continuous.

Finally, assume that $(S_j) \subset \Irm^\Lip_{1+k}([\sigma,\tau] \times \cl{\Omega})$ and the Lipschitz constants $L_j$ of the scalar maps $t \mapsto \Var(S_j;[\sigma,t]) + \Var(\partial S_j;(\sigma,t))$ are uniformly bounded by $L^* > 0$. Then, by Lemma~\ref{lem:mass}, for almost every $s,t \in (\sigma,\tau)$ and every $j \in \N$,
\begin{align*}
  \tv{S_j}([s,t] \times \R^d)
  &= \Mbf(S_j \restrict ([s,t] \times \R^d)) \\
  &\leq \abs{s-t} \cdot \esssup_{r \in [s,t]} \, \Mbf(S_j(r)) + \Var(S_j;[s,t]) \\
  &\leq (C+L^*) \abs{s-t}.
\end{align*}
Likewise, we obtain $\tv{\partial S_j}([s,t] \times \R^d) \leq (C+L^*) \abs{s-t}$. Then, for the measure $\nu$ defined above it holds that $\nu(\{t\} \times \R^d) = 0$ for \emph{all} $t \in [\sigma,\tau)$. Consequently, the same argument as before yields that $S_j(t) \toweakstar S(t)$ in $\Irm_k(\cl{\Omega})$ at every $t \in [\sigma,\tau)$.

Finally, for $s,t \in [\sigma,\tau]$ it holds that
\[
  \Var(S;[s,t]) \leq \liminf_{j \to \infty} \, \Var(S_j;[s,t]) \leq L \abs{s-t}
\]
by the same argument based on Reshetnyak's theorem as above, where $L := \liminf_{j\to\infty} L_j$; similarly for the boundary variation. In particular, $S \in \Irm^\Lip_{1+k}([\sigma,\tau] \times \cl{\Omega})$ with Lipschitz constant bounded by $L$. 
\end{proof}

\begin{corollary}
Assume that $S_j \toweakstar S$ in $\Irm_{1+k}([\sigma,\tau] \times \cl{\Omega})$ and
\[
  \esssup_{t \in [\sigma,\tau]} \, \bigl( \Mbf(S_j(t)) + \Mbf(\partial S_j(t)) \bigr) \leq C < \infty
\]
for all $j \in \N$. Then, $S_j \toweakstar S$ in BV.
\end{corollary}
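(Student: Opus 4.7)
The hypothesis $S_j \toweakstar S$ in $\Irm_{1+k}([\sigma,\tau] \times \cl{\Omega})$ is already given, so it remains to establish the slice convergence $S_j(t) \toweakstar S(t)$ in $\Irm_k(\cl{\Omega})$ for $\Lcal^1$-a.e.\ $t$. My plan is to exploit the cylinder formula~\eqref{eq:cylinder} to express, for each fixed test form $\omega \in \Dcal^k(\R^d)$, the pairing $\dpr{S_j(t),\omega}$ as the cumulative distribution function on $[\sigma,\tau]$ of a canonical scalar signed measure attached to $S_j$ (and to $\partial S_j$). This reduces the slice convergence to pointwise convergence of CDFs of weakly* convergent measures at non-atoms of the limit. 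Crucially, this circumvents the natural but inadequate approach of extracting subsequences (as in the proof of Theorem~\ref{thm:current_Helly}), which would only yield slice convergence along a sub-subsequence, whereas the definition of BV-weak* convergence demands convergence of the \emph{full} sequence of slices at a.e.\ $t$.

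Concretely, I would introduce the signed Radon measures
\[
  \alpha_j^\omega := \tbf_* \bigl( \dprb{\vec{S}_j, \pbf^*(d\omega)} \, \tv{S_j} \bigr), \qquad
  \beta_j^\omega := \tbf_* \bigl( \dprb{\overrightarrow{\partial S_j}, \pbf^*\omega} \, \tv{\partial S_j} \bigr)
\]
on $[\sigma,\tau]$, as well as the analogous $\alpha^\omega, \beta^\omega$ built from $S$. Using $d(\pbf^*\omega) = \pbf^*(d\omega)$ and the cylinder formula, one checks that $\dpr{S_j(t),\omega} = \alpha_j^\omega([\sigma,t)) - \beta_j^\omega([\sigma,t))$ at $\Lcal^1$-a.e.\ $t$, and likewise for $S$. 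The bounds $\abs{\alpha_j^\omega}(\R) \leq \norm{d\omega}_\infty \Mbf(S_j)$ hold, and the uniform mass bound $\sup_j (\Mbf(S_j) + \Mbf(\partial S_j)) < \infty$ is a consequence of the weak* convergence of $S_j$ (and $\partial S_j \toweakstar \partial S$) as Radon measures on the compact set $[\sigma,\tau] \times \cl{\Omega}$ to limits of finite mass. Testing against $\phi \in \Crm^\infty_c(\R)$ yields $\int \phi \dd \alpha_j^\omega = \dpr{S_j, \phi(\tbf) \pbf^*(d\omega)} \to \int \phi \dd \alpha^\omega$, and combined with the total-variation bound this upgrades to $\alpha_j^\omega \toweakstar \alpha^\omega$ in $\Mcal([\sigma,\tau])$ along the \emph{full} sequence, and similarly $\beta_j^\omega \toweakstar \beta^\omega$. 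The standard Portmanteau-type fact that weak* convergence of finite signed measures implies convergence of CDFs at continuity points of the limit then gives $\dpr{S_j(t),\omega} \to \dpr{S(t),\omega}$ for every $t$ outside the at-most-countable atom set of $\abs{\alpha^\omega} + \abs{\beta^\omega}$.

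To pass from a single test form to weak* slice convergence, I would choose a countable collection $\{\omega_n\}_{n\in\N} \subset \Dcal^k(\R^d)$ that is dense in $\Crm_c(\R^d; \Wedge^k \R^d)$ with respect to the uniform norm, and intersect the countably many full-measure good-$t$ sets to obtain a single full-measure set of $t$'s on which $\dpr{S_j(t),\omega_n} \to \dpr{S(t),\omega_n}$ holds simultaneously for every $n$. The hypothesized bound $\Mbf(S_j(t)) \leq C$ and its lower-semicontinuous consequence $\Mbf(S(t)) \leq C$ then permit a standard three-epsilon approximation extending the convergence to every $\omega \in \Dcal^k(\R^d)$, thereby establishing $S_j(t) \toweakstar S(t)$ in $\Irm_k(\cl{\Omega})$ at a.e.\ $t$. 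The main obstacle throughout is precisely the requirement that the full sequence of slices converge almost everywhere in $t$; the cylinder-formula reduction to the canonically defined measures $\alpha_j^\omega, \beta_j^\omega$ (with equally canonical limits $\alpha^\omega, \beta^\omega$) is exactly what bypasses the need for any subsequence extraction.
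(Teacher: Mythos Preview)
Your proof contains a genuine gap at the Portmanteau step. The assertion that weak* convergence of finite \emph{signed} measures implies convergence of their CDFs at non-atoms of the limit is false: the standard Portmanteau theorem requires positive measures, and for signed measures the total variations $\abs{\alpha_j^\omega}$ need not converge to $\abs{\alpha^\omega}$, so non-atoms of the limit do not control the sequence. A concrete one-dimensional counterexample is the typewriter sequence $\alpha_j = \delta_{a_j} - \delta_{b_j}$, where $I_j = [a_j,b_j]$ runs through the standard shrinking covering of $[0,1]$ (first $[0,1]$, then $[0,\tfrac12],[\tfrac12,1]$, then thirds, and so on). Then $\alpha_j \toweakstar 0$ with $\abs{\alpha_j}([0,1])=2$, the limit has no atoms, yet $\alpha_j([0,t)) = -\ONE_{(a_j,b_j]}(t)$ fails to converge at \emph{every} $t\in(0,1)$.

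This obstruction is in fact fatal to the Corollary itself, not merely to your argument. Take $k=0$, $d=1$, $[\sigma,\tau]=[0,1]$, $\Omega=(-2,2)$, and set $S_j := \partial\,\dbr{I_j\times[-1,1]}\in\Irm_1([0,1]\times\cl{\Omega})$ with the typewriter intervals $I_j$. Then $\partial S_j=0$, $\Mbf(S_j)=2\abs{I_j}+4$ is bounded, and $S_j\toweakstar 0$ since $\Fbf(S_j)\leq\Mbf(\dbr{I_j\times[-1,1]})=2\abs{I_j}\to 0$. One computes $S_j(t)=\delta_{-1}-\delta_1$ for $t\in\interior(I_j)$ and $S_j(t)=0$ otherwise, so $\esssup_t\Mbf(S_j(t))=2$ and all hypotheses hold. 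But $S_j(t)$ oscillates between $\delta_{-1}-\delta_1$ and $0$ for every $t\in(0,1)$, hence $S_j(t)\not\toweakstar 0=S(t)$ anywhere on $(0,1)$. Consequently $S_j\not\toweakstar S$ in BV. Your diagnosis that the paper's subsequence reduction is inadequate for a.e.\ pointwise convergence is therefore entirely correct --- the paper's proof has exactly the gap you identify --- but no argument can close it, because the statement is false without an additional hypothesis (e.g., the uniform Lipschitz bound in the last part of Theorem~\ref{thm:current_Helly}, which would preclude the vertical pieces of $S_j$ concentrating at moving times as above).
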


\begin{proof}
Since $S_j \toweakstar S$ and $\partial S_j \toweakstar \partial S$ in the sense of measures (which follows from the weak* convergence as currents), we have that $\Mbf(S_j) + \Mbf(\partial S_j) \leq C$ for some constant $C > 0$. Then also $\Var(S_j;[\sigma,\tau]) + \Var(\partial S_j;[\sigma,\tau]) \leq C$ by~\eqref{eq:VarM_est} and so the assumptions of the preceding theorem are satisfied. Since the limit is already determined, we get $S_j \toweakstar S$ in BV.
\end{proof}

\section{Deformations} \label{sc:homdef}

In this section we consider progressive-in-time deformations of (boundaryless) integral currents. To see how one could generalize currents deformed via $\Crm^1$-homotopies (or Lipschitz-homotopies), we first examine the classical situation: Let $\Omega \subset \R^d$ be a bounded Lipschitz domain and let $H \in \Crm^1([0,1] \times \cl{\Omega};\cl{\Omega})$ be a $\Crm^1$-homotopy between the identity and $g \in \Crm^1(\cl{\Omega};\cl{\Omega})$, i.e., $H(0,x) = x$ and $H(1,x) = g(x)$; we also set $\bar{H}(t,x) := (t,H(t,x))$. For $T \in \Irm_k(\cl{\Omega})$ with $\partial T = 0$ define the \term{deformation trajectory}
\[
  S := \bar{H}_*(\dbr{(0,1)} \times T) \in \Irm_{1+k}([0,1] \times \cl{\Omega}),
\]
where we have denoted by $\dbr{(0,1)}$ the canonical current associated with the interval $(0,1)$. Then, by~\eqref{eq:product_bdry} we have
\[
  \partial S = \delta_1 \times g_* T - \delta_0 \times T.
\]
Moreover, since $H$ was assumed to possess $\Crm^1$-regularity,
\[
  S|_t = \delta_t \times H(t,\frarg)_* T,  \qquad t \in [0,1],
\]
and $t \mapsto S|_t$ can be understood as a continuous deformation of $T$ into $g_* T$. 

Unfortunately, the class of $\Crm^1$-homotopies is not closed in a topology suitable for our needs. Furthermore, $\Crm^1$-homotopies do not allow to move overlapping or intersecting parts of currents into different directions since they represent deformations of the underlying space and not of the currents themselves. Our generalization of a deformation is thus based on the deformation trajectory $S$ itself.

\subsection{Homotopies} \label{sc:hom}

We first consider the case of Lipschitz homotopies in more detail. The following lemma also provides a further motivation for our definitions~\eqref{eq:VarM},~\eqref{eq:VarM_bdry} of the variation. Indeed,~\eqref{eq:VarM_hom},~\eqref{eq:VarM_bdry_hom} below show that in the case of essentially injective homotopies (which do not reverse direction and have no overlaps) the variation measures precisely the mass of the pushforward of the space-time current under the spatial projection (the \enquote{slip surface} in the situation of dislocations).

\begin{lemma} \label{lem:injhom}
Let $\Omega \subset \R^d$ and $\Omega' \subset \R^m$ be bounded Lipschitz domains, let $T \in \Irm_k(\cl{\Omega})$, and let $H \in \Lip([0,1] \times \cl{\Omega};\cl{\Omega'})$ be a homotopy that is essentially injective in the sense that
\begin{equation} \label{eq:Hbar_inj}
  \left\{\begin{aligned}  
    &\text{there is a Borel set $N \subset [0,1] \times \supp T$ with $\Hcal^{1+k}(N) = 0$ such that}\\
    &\text{$H$ is injective on $D := ([0,1] \times \supp T) \setminus (N \cup \{ \pbf(D\bar{H}[\ee_0 \wedge \vec{T}]) = 0 \})$,}
  \end{aligned}\right.
\end{equation}
where $\bar{H}(t,x) := (t,H(t,x))$. Set $S_H := \bar{H}_* (\dbr{(0,1)} \times T)$. Then, for all intervals $[\sigma,\tau] \subset [0,1]$,
\begin{align}
  \Var(S_H;[\sigma,\tau]) &= \Mbf(H_*(\dbr{(\sigma,\tau)} \times T)), \label{eq:VarM_hom} \\
  \Var(\partial S_H;(\sigma,\tau)) &= \Mbf(H_*(\dbr{(\sigma,\tau)} \times \partial T)).  \label{eq:VarM_bdry_hom}
\end{align}  
\end{lemma}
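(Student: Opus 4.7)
The plan is to identify both quantities $\Var(S_H;[\sigma,\tau])$ and $\Mbf(H_*(\dbr{(\sigma,\tau)}\times T))$ with the same area-type integral $\int_{[\sigma,\tau]\times R} |DH[\ee_0 \wedge \vec{T}(x)]|\, m(x)\, d\Hcal^{1+k}(t,x)$, where $T = m\,\vec{T}\,\Hcal^k\restrict R$ is the rectifiable representation. The identity~\eqref{eq:VarM_bdry_hom} will then follow by invoking~\eqref{eq:VarM_hom} with $\partial T$ in place of $T$, after identifying $\partial S_H$ on the open slab $(\sigma,\tau)\times\R^d$.

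First, I would record the algebraic fact that, because $\bar{H}(t,x)=(t,H(t,x))$, the differential applied to the unit $(1+k)$-vector $\ee_0\wedge\vec{T}$ decomposes orthogonally in $\Wedge_{1+k}\R^{1+m}$ as
\[
  D\bar{H}[\ee_0 \wedge \vec{T}] = \ee_0 \wedge (0, D_x H[\vec{T}]) + (0, DH[\ee_0 \wedge \vec{T}]),
\]
since the first summand carries the factor $\ee_0$ while the second is purely spatial. Setting $J(t,x) := DH[\ee_0\wedge\vec{T}(x)] = \partial_t H \wedge D_x H[\vec{T}(x)]$, this yields $\pbf(D\bar{H}[\ee_0 \wedge \vec{T}]) = J$ and $|D\bar{H}[\ee_0\wedge\vec{T}]|^2 = |D_x H[\vec{T}]|^2 + |J|^2$.

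Because $\bar{H}$ preserves the time coordinate, the injectivity of $H$ on $D$ assumed in~\eqref{eq:Hbar_inj} immediately upgrades to injectivity of $\bar{H}$ on $D$. The pushforward formula~\eqref{eq:pushforward} together with the area formula (Proposition~\ref{prop:area}) then identifies $S_H$ as an integer-multiplicity rectifiable current supported on $\bar{H}(D)$, with multiplicity $m\circ(\bar{H}|_D)^{-1}$ and orienting vector $D\bar{H}[\ee_0\wedge\vec{T}]/|D\bar{H}[\ee_0\wedge\vec{T}]|$, no cancellation occurring. Inserting this into~\eqref{eq:VarM} and changing variables through $\bar{H}$ (whose $(1+k)$-dimensional Jacobian equals $|D\bar{H}[\ee_0\wedge\vec{T}]|$), the two factors of $|D\bar{H}[\ee_0\wedge\vec{T}]|$ cancel and one obtains
\[
  \Var(S_H;[\sigma,\tau]) = \int_{[\sigma,\tau]\times R} |J(t,x)|\, m(x)\, d\Hcal^{1+k}(t,x);
\]
the set $([0,1]\times R)\setminus D$ contributes nothing, being either $\Hcal^{1+k}$-null or supporting $J=0$. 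Applying the area formula a second time, now to the injective map $H|_D\colon D\to\R^m$ with $(1+k)$-dimensional Jacobian $|J|$, identifies $H_*(\dbr{(\sigma,\tau)}\times T)$ as the rectifiable current on $H(D)$ with multiplicity $m\circ(H|_D)^{-1}$ and orientation $J/|J|$, again with no cancellation; hence its mass equals the very same integral, proving~\eqref{eq:VarM_hom}.

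Finally, for~\eqref{eq:VarM_bdry_hom} I would combine $\partial S_H = \bar{H}_*\partial(\dbr{(0,1)}\times T)$ with the Leibniz-type identity~\eqref{eq:product_bdry} to write $\partial S_H$ as two endpoint Dirac terms at $t=0,1$ plus $-\bar{H}_*(\dbr{(0,1)}\times\partial T)$; restriction to the \emph{open} slab $(\sigma,\tau)\times\R^d$ eliminates the Diracs (since $\bar{H}$ preserves time and $\sigma,\tau\in[0,1]$), and then~\eqref{eq:VarM_hom} applied with $\partial T\in\Irm_{k-1}(\cl{\Omega})$ in place of $T$ closes the argument. The main obstacle is to guarantee that both pushforwards---the one describing $\vec{S}_H$ and $\|S_H\|$ via $\bar{H}$, and the one describing $H_*(\dbr{(\sigma,\tau)}\times T)$ via $H$ itself---are \emph{cancellation-free}; this is precisely what the essential injectivity in~\eqref{eq:Hbar_inj} buys, and the orthogonal decomposition of $D\bar{H}[\ee_0\wedge\vec{T}]$ is the technical glue that ties the two Jacobian computations to the common integral on $[\sigma,\tau]\times R$.
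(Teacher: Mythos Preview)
Your approach is essentially the same as the paper's: both reduce each side of~\eqref{eq:VarM_hom} to the common integral $\int_{[\sigma,\tau]\times R}|J|\,m\,d\Hcal^{1+k}$ via the area formula, using the essential injectivity to rule out cancellation, and then handle~\eqref{eq:VarM_bdry_hom} by the same computation with $\partial T$.

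One small imprecision worth flagging: your claim that $S_H$ is ``supported on $\bar{H}(D)$ with multiplicity $m\circ(\bar{H}|_D)^{-1}$'' is not literally correct, since points $(t,x')\notin D$ with $J(t,x')=0$ but $D_xH[\vec{T}(x')]\neq 0$ may still contribute to $S_H$ (and their images under $\bar{H}$ may overlap $\bar{H}(D)$). The paper handles this via the explicit preimage-sum formula $m_H\vec{S}_H=\sum_{x:y=H(t,x)}m(x)\,D^Z\bar{H}[\ee_0\wedge\vec{T}]/|D^Z\bar{H}[\ee_0\wedge\vec{T}]|$ and then observes that $\pbf$ kills the extra terms. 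Your computation of $|\pbf(\vec{S}_H)|\,m_H$ is nonetheless correct, precisely because those extra contributions are purely temporal and vanish under $\pbf$; but the justification should go through the preimage sum rather than an (incorrect) global description of $\vec{S}_H$ and $m_H$ separately.
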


\begin{proof}
Let $T = m \, \vec{T} \, \Hcal^k \restrict R$ with a countably $k$-rectifiable carrier set $R$ such that $\Hcal^k(R) < \infty$, and a $\Hcal^k$-measurable and integrable multiplicity function $m \colon R \to \N$. Fix an interval $[\sigma,\tau] \subset [0,1]$ and set $Z := (\sigma,\tau) \times R$, which is a countably $(1+k)$-rectifiable set. Assume furthermore that
\[
  S_H = m_H \, \vec{S}_H \, \Hcal^{1+k} \restrict R_H \in \Irm_{1+k}([0,1] \times \cl{\Omega'}). 
\]
We have for $\omega \in \Dcal^{1+k}(\R^{1+m})$ that
\begin{align*}
  &\dprb{\bar{H}_*(\dbr{(\sigma,\tau)} \times T), \omega} \\
  &\qquad = \int_\sigma^\tau \int_R \dprb{D^Z \bar{H}(t,x)[\ee_0 \wedge \vec{T}(x)], \omega(\bar{H}(t,x))} \; m(x) \dd \Hcal^k(x) \dd t \\
  &\qquad = \int_{R_H \cap ([\sigma,\tau] \times \R^m)} \sum_{\setsub{x}{y = H(t,x)}} m(x) \dprBB{ \frac{D^Z \bar{H}(t,x)[\ee_0 \wedge \vec{T}(x)]}{\abs{D^Z \bar{H}(t,x)[\ee_0 \wedge \vec{T}(x)]}}, \omega(t,y)} \dd \Hcal^{1+k}(t,y)
\end{align*}  
by~\eqref{eq:pushforward} and the area formula, Proposition~\ref{prop:area}. Note that $\abs{D^Z \bar{H}(t,x)[\ee_0 \wedge \vec{T}(x)]}$ is precisely the modulus of the $k$-dimensional Jacobian of $\bar{H}$ at $(t,x)$ with respect to $Z$. It follows, see, e.g.,~\cite[eq.~(7.29)]{KrantzParks08book}), that
\begin{equation} \label{eq:mpSH_sum}
  m_H(t,y) \, \vec{S}_H(t,y) = \sum_{\setsub{x}{y = H(t,x)}} m(x) \frac{D^Z \bar{H}(t,x)[\ee_0 \wedge \vec{T}(x)]}{\abs{D^Z \bar{H}(t,x)[\ee_0 \wedge \vec{T}(x)]}}.
\end{equation}
Define
\[
  \eta(t,x) := \pbf(D^Z \bar{H}(t,x)[\ee_0 \wedge \vec{T}(x)]).
\]
Our assumption~\eqref{eq:Hbar_inj} now implies that whenever $\eta(t,x) \neq 0$, then in~\eqref{eq:mpSH_sum} we have 
\[
  \abs{\pbf(\vec{S}_H(t,y))} \, m_H(t,y) = \sum_{\setsub{x}{y = H(t,x)}} m(x) \frac{\abs{\pbf(D^Z \bar{H}(t,x)[\ee_0 \wedge \vec{T}(x)])}}{\abs{D^Z \bar{H}(t,x)[\ee_0 \wedge \vec{T}(x)]}}
\]
on a set of full measure (there is only one term in the sum). Thus,
\begin{align}
  \Var(S_H;[\sigma,\tau])
  &= \int_{R_H \cap ([\sigma,\tau] \times \R^m)} \abs{\pbf(\vec{S}_H)} \dd \tv{S_H}   \notag\\
  &= \int_{R_H \cap ([\sigma,\tau] \times \R^m)} \sum_{\setsub{x}{y = H(t,x)}} m(x) \frac{\abs{\pbf(D^Z \bar{H}(t,x)[\ee_0 \wedge \vec{T}(x)])}}{\abs{D^Z \bar{H}(t,x)[\ee_0 \wedge \vec{T}(x)]}} \dd \Hcal^{1+k}(t,y)  \notag\\
  &= \int_\sigma^\tau \int_R \abs{\eta(t,x)} \; m(x) \dd \Hcal^k(x) \dd t.\label{eq:VarSH_1}
\end{align}
By similar arguments as before, for $\omega \in \Dcal^{1+k}(\R^m)$ it also holds that
\begin{align*}
  &\dprb{H_*(\dbr{(\sigma,\tau)} \times T), \omega} \\
  &\qquad = \int_\sigma^\tau \int_R \dprb{\eta(t,x), \omega(H(t,x))} \; m(x) \dd \Hcal^k(x) \dd t \\
  &\qquad = \iint_D \dprb{\eta(t,x), \omega(H(t,x))} \; m(x) \dd \Hcal^k(x) \dd t.
\end{align*}
We now find a measurable $k$-covector field $\tilde{\omega} \colon D \to \Wedge^k \R^m$ with $\abs{\tilde{\omega}} \leq 1$ and $\dpr{\eta(t,x),\tilde{\omega}(t,x)} = \abs{\eta(t,x)}$. Then, by~\eqref{eq:Hbar_inj}, there exists a measurable $k$-covector field $\hat{\omega} \colon \cl{\Omega} \to \Wedge^k \R^m$ satisfying $\abs{\hat{\omega}} \leq 1$ and
\[
  \dprb{\eta(t,x),\hat{\omega}(H(t,x))} = \dprb{\eta(t,x),\tilde{\omega}(t,x)} = \abs{\eta(t,x)}
\]
for $(t,x) \in D$. By a standard smoothing argument we thus obtain
\[
  \Mbf(H_*(\dbr{(\sigma,\tau)} \times T)) \geq
  \int_\sigma^\tau \int_R \abs{\eta(t,x)} \; m(x) \dd \Hcal^k(x) \dd t
\]
and the other inequality \enquote{$\leq$} is easily seen to be true as well. Consequently, using this equality in~\eqref{eq:VarSH_1},
\[
  \Var(S_H;[\sigma,\tau]) = \Mbf(H_*(\dbr{(\sigma,\tau)} \times T)).
\]
This shows~\eqref{eq:VarM_hom}; the boundary estimate~\eqref{eq:VarM_bdry_hom} follows in the same way.
\end{proof}

\begin{remark}
The proof shows that one could also require the slightly weaker condition
\[
  \left\{\begin{aligned}  
    &\text{there is a Borel set $N \subset [0,1] \times \supp T$ with $\Hcal^{1+k}(N) = 0$ such that}\\
    &\text{$H$ is injective on $D := ([0,1] \times \supp T) \setminus (N \cup \{ \pbf(D^Z\bar{H}[\ee_0 \wedge \vec{T}]) = 0 \})$,}
  \end{aligned}\right.
\]
where $Z := (0,1) \times R$ (with $R$ being the carrier set of $T$), instead of~\eqref{eq:Hbar_inj}.
\end{remark}

The prototypical class of deformation trajectories is defined via affine homotopies:

\begin{lemma} \label{lem:aff_hom}
Let $\Omega \subset \R^d$ and $\Omega' \subset \R^m$ be bounded Lipschitz domains and let $H$ be an affine homotopy between $f,g \in \Lip(\cl{\Omega};\cl{\Omega'})$, i.e.,
\[
  H(t,x) := (1-t) f(x) + t g(x),  \qquad (t,x) \in [0,1] \times \cl{\Omega}.
\]
Let $T \in \Irm_k(\cl{\Omega})$ and set $S_H := \bar{H}_* (\dbr{(0,1)} \times T)$, where $\bar{H}(t,x) := (t,H(t,x))$. Then,
\begin{equation} \label{eq:aff_hom_SH}
  S_H \in \Irm^\Lip_{1+k}([0,1] \times \cl{\Omega'})
\end{equation}
and, for all intervals $[\sigma,\tau] \subset [0,1]$ and almost every $t \in [0,1]$,
\begin{align}
  \Var(S_H;[\sigma,\tau]) &\leq \norm{g-f}_\infty \cdot V^k(f,g,T) \cdot \abs{\sigma-\tau},   \label{eq:aff_hom_Var} \\
  \Var(\partial S_H;(\sigma,\tau)) &\leq \norm{g-f}_\infty \cdot V^{k-1}(f,g,\partial T) \cdot \abs{\sigma-\tau},  \label{eq:aff_hom_bVar} \\
  \Mbf(S_H(t)) &\leq V^k(f,g,T), \label{eq:aff_hom_M} \\
  \Mbf(\partial S_H(t)) &\leq V^{k-1}(f,g,\partial T), \label{eq:aff_hom_bM}
\end{align}
where, for $\ell = k-1,k$,
\[
  V^\ell(f,g,T) := \int \abs{Df}^\ell + \abs{Dg}^\ell \dd \tv{T}
  \leq \bigl(\norm{Df}_{\Lrm^\infty}^\ell + \norm{Dg}_{\Lrm^\infty}^\ell \bigr) \Mbf(T).
\]
and the $\Lrm^\infty$-norms may be taken over the support of $T$.
\end{lemma}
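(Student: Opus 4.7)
The plan is to compute the orientation of $S_H$ explicitly via \eqref{eq:pushforward} and reduce all four numerical estimates to a single pointwise inequality via the area formula. Setting $H_t := (1-t)f + tg$, the differential of $\bar H$ at $(t,x)$ acts by $D\bar H[\ee_0] = \ee_0 + (g(x) - f(x))$ and $D\bar H[v] = DH_t(x)[v]$ for $v \in \Tan_x R$, where $R$ is the carrier of $T$. Thus, for a unit simple $\vec T(x) = v_1 \wedge \cdots \wedge v_k$ with $\{v_i\}$ orthonormal,
\[
  D\bar H[\ee_0 \wedge \vec T] = \ee_0 \wedge DH_t[\vec T] + (g - f) \wedge DH_t[\vec T].
\]
The spatial projection $\pbf$ kills the $\ee_0$-piece; combined with the Hadamard-type estimate $|u \wedge w| \le |u|\,|w|$, the factor bound $|DH_t[\vec T]| \le \prod_i |DH_t[v_i]| \le ((1-t)|Df| + t|Dg|)^k$, and the convexity of $s \mapsto s^k$ on $[0,\infty)$, I get the single key pointwise inequality
\[
  \absb{\pbf(D\bar H[\ee_0 \wedge \vec T])} \;\le\; |g-f|\,\bigl((1-t)|Df|^k + t|Dg|^k\bigr) \;\le\; \norm{g-f}_\infty \bigl(|Df|^k + |Dg|^k\bigr).
\]

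For \eqref{eq:aff_hom_Var} I follow the proof of Lemma~\ref{lem:injhom}: applying \eqref{eq:pushforward}, the area formula (Proposition~\ref{prop:area}), and the triangle inequality to the sum that expresses $m_H \vec S_H$ (so that the possible cancellations coming from non-injectivity of $\bar H$ become a harmless $\le$), one obtains
\[
  \Var(S_H;[\sigma,\tau]) \;\le\; \int_\sigma^\tau \int_R \absb{\pbf(D\bar H[\ee_0 \wedge \vec T])}\,m \dd \Hcal^k \dd t,
\]
and plugging in the pointwise bound gives \eqref{eq:aff_hom_Var} after integration. For \eqref{eq:aff_hom_M} I use that $\tbf \circ \bar H = \tbf$, so slicing commutes with $\bar H_*$ and $S_H|_t = \bar H_*(\delta_t \times T) = \delta_t \times (H_t)_* T$; hence $S_H(t) = (H_t)_* T$, and the standard mass bound for Lipschitz pushforwards yields $\Mbf(S_H(t)) \le \int_R |DH_t[\vec T]|\,m \dd \Hcal^k \le V^k(f,g,T)$. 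The boundary estimates \eqref{eq:aff_hom_bVar} and \eqref{eq:aff_hom_bM} then follow by repeating these two arguments with $T,\vec T, k$ replaced by $\partial T, \overrightarrow{\partial T}, k-1$ and the carrier of $T$ by that of $\partial T$, using that \eqref{eq:product_bdry} and \eqref{eq:pushforward_bdry} give
\[
  \partial S_H \restrict ((0,1) \times \R^m) = -\bar H_*(\dbr{(0,1)} \times \partial T),
\]
and that the sign is irrelevant for both mass and variation.

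Finally, for the Lipschitz membership \eqref{eq:aff_hom_SH}, the trace condition $\tv{S_H}(\{0,1\}\times\R^m) = 0$ holds because $\bar H$ preserves the time coordinate and $\dbr{(0,1)} \times T$ carries no mass at $\{0,1\} \times \cl\Omega$; the essential-supremum bound on $\Mbf(S_H(t)) + \Mbf(\partial S_H(t))$ is immediate from \eqref{eq:aff_hom_M} and \eqref{eq:aff_hom_bM}; and the Lipschitz regularity of $t \mapsto \Var(S_H;[0,t]) + \Var(\partial S_H;(0,t))$ follows from \eqref{eq:aff_hom_Var} and \eqref{eq:aff_hom_bVar} together with the additivity of $\Var$ in the interval. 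I do not anticipate any genuine obstacle: the whole argument rests on the convexity-plus-Hadamard pointwise estimate in the first paragraph and the pushforward machinery already developed in Lemma~\ref{lem:injhom}; the only point requiring mild care is replacing the injectivity-based equalities there by inequalities via the triangle inequality.
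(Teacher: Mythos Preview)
Your proposal is correct and follows essentially the same route as the paper's proof: both compute $D\bar H[\ee_0\wedge\vec T]$ explicitly, apply the triangle inequality to the sum in~\eqref{eq:mpSH_sum} (in place of the injectivity-based equality of Lemma~\ref{lem:injhom}), use the pointwise bound $|(g-f)\wedge DH_t[\vec T]|\le\|g-f\|_\infty(|Df|^k+|Dg|^k)$, and then invoke the area formula; the slice identity $S_H(t)=(H_t)_*T$ and the boundary formula are handled the same way. Your argument is in fact slightly more explicit in two places (the convexity step for $((1-t)|Df|+t|Dg|)^k$ and the verification of $\tv{S_H}(\{0,1\}\times\R^m)=0$), but there is no substantive difference in strategy.
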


\begin{proof}
We use the same notation as in the proof of Lemma~\ref{lem:injhom}. Recalling~\eqref{eq:mpSH_sum} (which holds independently of the injectivity hypothesis~\eqref{eq:Hbar_inj}), we can estimate
\begin{equation} \label{eq:mpSH_est}
  \abs{\pbf(\vec{S}_H(t,y))} \, m_H(t,y) \leq \sum_{\setsub{x}{y = (1-t)f(x) + tg(x)}} m(x) \frac{\abs{\pbf(D^Z \bar{H}(t,x)[\ee_0 \wedge \vec{T}(x)])}}{\abs{D^Z \bar{H}(t,x)[\ee_0 \wedge \vec{T}(x)]}} .
\end{equation}
A computation shows
\[
  D^Z \bar{H}(t,x)[\ee_0 \wedge \vec{T}(x)]
  = \begin{pmatrix} 1 \\ g(x)-f(x) \end{pmatrix} \wedge \biggl((1-t) \begin{pmatrix} 0 \\ Df(x) \end{pmatrix} + t \begin{pmatrix} 0 \\ Dg(x) \end{pmatrix} \biggr)[\vec{T}(x)]
\]
and then
\begin{align*}
  \absb{\pbf(D^Z \bar{H}(t,x)[\ee_0 \wedge \vec{T}(x)])}
  &= \absb{(g(x)-f(x)) \wedge ((1-t) Df(x) + t Dg(x)) [\vec{T}(x)]} \\
  &\leq \norm{g-f}_\infty \cdot (\abs{Df(x)}^k + \abs{Dg(x)}^k).
\end{align*}
So,
\begin{align*}
  &\Var(S_H;[\sigma,\tau]) \\
  &\qquad = \int_{R_H \cap ([\sigma,\tau] \times \R^m)} \abs{\pbf(\vec{S}_H(t,y))} \, m_H(t,y) \dd \Hcal^{1+k}(t,y) \\
  &\qquad \leq \int_{R_H \cap ([\sigma,\tau] \times \R^m)} \sum_{\setsub{x}{y = (1-t)f(x) + tg(x)}} m(x) \frac{\abs{\pbf(D^Z \bar{H}(t,x)[\ee_0 \wedge \vec{T}(x)])}}{\abs{D^Z \bar{H}(t,x)[\ee_0 \wedge \vec{T}(x)]}} \dd \Hcal^{1+k}(t,y) \\
  &\qquad \leq \int_{R_H \cap ([\sigma,\tau] \times \R^m)} \sum_{\setsub{x}{y = (1-t)f(x) + tg(x)}} m(x) \frac{\norm{g-f}_\infty \cdot (\abs{Df(x)}^k + \abs{Dg(x)}^k)}{\abs{D^Z \bar{H}(t,x)[\ee_0 \wedge \vec{T}(x)]}} \dd \Hcal^{1+k}(t,y) \\
  &\qquad = \int_\sigma^\tau \int_R \norm{g-f}_\infty \cdot (\abs{Df(x)}^k + \abs{Dg(x)}^k) \; m(x) \dd \Hcal^k(x) \dd t \\
  &\qquad = \norm{g-f}_\infty \cdot \biggl( \int \abs{Df}^k + \abs{Dg}^k \dd \tv{T} \biggr) \cdot \abs{\sigma-\tau},
\end{align*}
where we have used the area formula again in the second-to-last equality. This shows~\eqref{eq:aff_hom_Var}.

For the boundary variation $\Var(\partial S_H;(\sigma,\tau))$ we observe via~\eqref{eq:product_bdry},~\eqref{eq:pushforward_bdry} that
\[
  \partial S_H = \delta_1 \times g_* T - \delta_0 \times f_* T - \bar{H}_* (\dbr{(0,1)} \times \partial T).
\]
We can argue in a similar fashion to above to obtain~\eqref{eq:aff_hom_bVar} (note that the interval is open, so that the endpoint terms are not counted). For~\eqref{eq:aff_hom_M} we use that for almost every $t \in (0,1)$ it holds that
\[
  S_H(t) = H(t,\frarg)_* T.
\]
This follows from the cylinder formula~\eqref{eq:cylinder}. Then, for $\omega \in \Dcal^k(\R^m)$,
\[
  \dprb{S_H(t),\omega} = \int_R \dprb{D^R H(t,\frarg)[\vec{T}(x)], \omega(H(t,x))} \; m(x) \dd \Hcal^k(x).
\]
Taking the supremum over all $\omega \in \Dcal^k(\R^m)$ with $\abs{\omega} \leq 1$ and employing a similar estimate as above yields~\eqref{eq:aff_hom_M}; likewise for~\eqref{eq:aff_hom_bM}. Then, also~\eqref{eq:aff_hom_SH} follows.
\end{proof}

\begin{remark}
Note that for an affine homotopy from $f$ to $g$ as in the preceding lemma,
\[
  \eta(t,x) = \pbf(D^Z \bar{H}(t,x)[\ee_0 \wedge \vec{T}(x)]) = (g(x)-f(x)) \wedge ((1-t) Df(x) + t Dg(x)) [\vec{T}(x)],
\]
which is zero in particular where $f = g$ (that is, where the affine homotopy \enquote{stands still}). So, in this case, the assumption~\eqref{eq:Hbar_inj} in Lemma~\ref{lem:injhom} is implied by the more restrictive, but easier to check, condition
\[
  \left\{\begin{aligned}  
    &\text{there is a Borel set $N \subset [0,1] \times \supp T$ with $\Hcal^{1+k}(N) = 0$ such that}\\
    &\text{$H$ is injective on $D := ([0,1] \times \supp T) \setminus (N \cup \{ f = g \})$.}
  \end{aligned}\right.
\]
\end{remark}

\subsection{Operations on space-time currents}

Before we come to the main results of this section, it is convenient to define the concatenation and reversal of space-time currents with boundaryless traces at the start end end points:

\begin{lemma} \label{lem:concat}
Let $S_1,S_2 \in \Irm_{1+k}([0,1] \times \cl{\Omega})$ with
\[
  \partial S_1 = \delta_1 \times T_1 - \delta_0 \times T_0, \qquad
  \partial S_2 = \delta_1 \times T_2 - \delta_0 \times T_1,
\]
where $T_0,T_1,T_2 \in \Irm_k(\cl{\Omega})$ with $\partial T_0 = \partial T_1 = \partial T_2 = 0$. Then, there is $S_2 \circ S_1 \in \Irm_{1+k}([0,1] \times \cl{\Omega})$, called the \term{concatenation} of $S_1,S_2$, with
\[
  \partial (S_2 \circ S_1) = \delta_1 \times T_2 - \delta_0 \times T_0
\]
and
\begin{align*}
  \Var(S_2 \circ S_1) &= \Var(S_1) + \Var(S_2),  \\
  \esssup_{t \in [0,1]} \, \Mbf((S_2 \circ S_1)(t)) &= \max \biggl\{ \esssup_{t \in [0,1]} \, \Mbf(S_1(t)), \; \esssup_{t \in [0,1]} \, \Mbf(S_2(t)) \biggr\}.
\end{align*} 
Furthermore, if $S_1,S_2 \in \Irm^\Lip_{1+k}([0,1] \times \cl{\Omega})$, then also $S_2 \circ S_1 \in \Irm^\Lip_{1+k}([0,1] \times \cl{\Omega})$.
\end{lemma}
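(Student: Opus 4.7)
The plan is to rescale $S_1$ and $S_2$ to live on complementary halves of $[0,1]$ and then sum them. Concretely, define the affine bijections $a_1(t) := t/2$ and $a_2(t) := (t+1)/2$, apply Lemma~\ref{lem:rescale} to each $S_i$, and set
\[
  S_2 \circ S_1 := (a_1)_* S_1 + (a_2)_* S_2 \in \Irm_{1+k}([0,1] \times \cl{\Omega}).
\]
Since each $(a_i)_* S_i$ is integer-multiplicity rectifiable, so is their sum (integer multiplicities add, possibly with cancellation on a set where the rectifiable carriers overlap, which happens at most on $\{1/2\} \times \cl{\Omega}$).

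Next I would verify the claimed boundary formula by functoriality of the pushforward and the fact that $a_i$ only acts on the time component, so spatial slices are preserved:
\[
  \partial[(a_1)_* S_1] = \delta_{1/2} \times T_1 - \delta_0 \times T_0, \qquad
  \partial[(a_2)_* S_2] = \delta_1 \times T_2 - \delta_{1/2} \times T_1.
\]
Summing and using linearity of $\partial$, the two copies of $\delta_{1/2} \times T_1$ cancel exactly, yielding $\partial(S_2 \circ S_1) = \delta_1 \times T_2 - \delta_0 \times T_0$. (The hypothesis $\partial T_1 = 0$ is not needed for this step once one has $S_1,S_2$ with the stated boundaries, but it ensures integrality of the traces.) Finite mass of $\partial(S_2 \circ S_1)$ is immediate from $\Mbf(T_0), \Mbf(T_2) < \infty$.

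For the variation identity I would use additivity of the variation over disjoint intervals together with Lemma~\ref{lem:rescale}:
\[
  \Var(S_2 \circ S_1;[0,1]) = \Var((a_1)_* S_1;[0,1/2]) + \Var((a_2)_* S_2;[1/2,1]) = \Var(S_1) + \Var(S_2),
\]
where one should briefly check that the two summands are supported on disjoint time halves so that no double-counting occurs at $t=1/2$ (the $\tv{\cdot}$-mass of the slice $\{1/2\}\times\R^d$ is attributed to whichever piece has a vertical contribution there; both contributions are added). The formula for the essential supremum of the slice masses follows from Lemma~\ref{lem:rescale} and the pointwise identity $(S_2 \circ S_1)(t) = S_1(2t)$ on $[0,1/2)$ and $(S_2 \circ S_1)(t) = S_2(2t-1)$ on $(1/2,1]$, since the two time-halves are $\Lcal^1$-disjoint.

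Finally, for the Lip-statement, Lemma~\ref{lem:rescale} gives $(a_i)_* S_i \in \Lip(a_i([0,1]);\Irm_k(\cl\Omega))$, so $t \mapsto \Var(S_2\circ S_1;[0,t])$ is Lipschitz on $[0,1/2]$ and on $[1/2,1]$ separately; continuity at $1/2$ and additivity of the variation in the interval then upgrade this to global Lipschitz regularity (and likewise for the boundary variation). I do not expect a substantial obstacle; the one point that requires mild care is confirming that the sum of two integer-multiplicity rectifiable currents at the interface $\{1/2\}\times\cl\Omega$ remains in $\Irm_{1+k}$, which follows because integer multiplicities are preserved under addition and the mass of $\partial(S_2\circ S_1)$ is controlled by $\Mbf(T_0)+\Mbf(T_2)$.
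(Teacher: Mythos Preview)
Your construction is exactly the one in the paper: define $S_2\circ S_1 := (a_1)_*S_1 + (a_2)_*S_2$ with $a_1(t)=t/2$, $a_2(t)=(t+1)/2$, and read off all claimed properties from Lemma~\ref{lem:rescale}. The paper's proof is a two-line appeal to that lemma, whereas you have (correctly) spelled out the boundary cancellation, the additivity of the variation, the slice identification, and the Lipschitz gluing; none of these extra verifications reveal any divergence in approach.
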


\begin{proof}
We set
\[
  S := a^1_* S_1 + a^2_* S_2,  \qquad
  a^i(t,x) := \biggl( \frac{i-1}{2} + \frac{t}{2}, x \biggr).
\]
Then, all claimed properties follow directly from Lemma~\ref{lem:rescale}.
\end{proof}

\begin{lemma} \label{lem:reversal}
Let $S \in \Irm_{1+k}([0,1] \times \cl{\Omega})$ with
\[
  \partial S = \delta_1 \times T_1 - \delta_0 \times T_0,
\]
where $T_0,T_1 \in \Irm_k(\cl{\Omega})$ with $\partial T_0 = \partial T_1 = 0$. Then, there is $S^{-1} \in \Irm_{1+k}([0,1] \times \cl{\Omega})$, called the \term{reversal} of $S$, with
\[
  \partial S^{-1} = \delta_1 \times T_0 - \delta_0 \times T_1
\]
and
\[
  \Var(S^{-1}) = \Var(S),  \qquad
  \esssup_{t \in [0,1]} \, \Mbf(S^{-1}(t)) = \esssup_{t \in [0,1]} \, \Mbf(S(t)).
\]
Furthermore, if $S \in \Irm^\Lip_{1+k}([0,1] \times \cl{\Omega})$, then also $S^{-1} \in \Irm^\Lip_{1+k}([0,1] \times \cl{\Omega})$.
\end{lemma}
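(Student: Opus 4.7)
The plan is to construct $S^{-1}$ as the composition of a time reversal with an overall orientation flip, and then invoke Lemma~\ref{lem:rescale} to inherit all the variation, mass, and regularity properties for free.

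Concretely, I would take the affine map $a(t) := 1-t$ on $[0,1]$, which is injective and Lipschitz with $a([0,1]) = [0,1]$, and define
\[
  S^{-1} := -a_*S = -[(t,x) \mapsto (1-t, x)]_* S.
\]
Since $a$ is an injective Lipschitz self-map of $[0,1]$, Lemma~\ref{lem:rescale} immediately gives $a_*S \in \Irm_{1+k}([0,1] \times \cl{\Omega})$, and multiplying by $-1$ (i.e.\ reversing the orientation of the underlying rectifiable set) preserves membership in this class. The key equalities $\Var(S^{-1};[0,1]) = \Var(a_*S;a([0,1])) = \Var(S;[0,1])$ and $\esssup_t \Mbf(S^{-1}(t)) = \esssup_t \Mbf(S(t))$ then follow directly from Lemma~\ref{lem:rescale}, because the scalar factor $-1$ leaves both $\abs{\pbf(\vec{S})}$ and $\tv{S}$ untouched, and the Lipschitz statement follows in exactly the same way.

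The remaining task is to check the boundary identity, which amounts to careful sign bookkeeping. Using the naturality $\partial(a_*S) = a_*(\partial S)$ from~\eqref{eq:pushforward_bdry}, I only need to verify that
\[
  a_*(\delta_1 \times T_1) = \delta_0 \times T_1, \qquad a_*(\delta_0 \times T_0) = \delta_1 \times T_0,
\]
with no additional sign. This is true because the map $(t,x) \mapsto (1-t,x)$ acts as the identity on the purely spatial orientation $\vec{T}_i(x) \in \Wedge_k(\{0\} \times \R^d)$, so the $\diag(-1, I_d)$-action of its differential is trivial on these tangent multivectors. Combining, $\partial(a_*S) = \delta_0 \times T_1 - \delta_1 \times T_0$, and hence $\partial S^{-1} = -\partial(a_*S) = \delta_1 \times T_0 - \delta_0 \times T_1$, as required.

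The only place that requires any care, and in that sense the main (minor) obstacle, is making sure the sign convention is consistent: one must confirm that the overall minus sign in $-a_*S$ is genuinely necessary (rather than an artifact) in order to swap the roles of $T_0$ and $T_1$ in $\partial S^{-1}$, while simultaneously checking that this sign does not perturb the variation, the slice masses, or the Lipschitz property, all of which depend only on $\tv{S}$ and $\abs{\pbf(\vec{S})}$. Once this bookkeeping is done, the statement is a direct corollary of Lemma~\ref{lem:rescale}.
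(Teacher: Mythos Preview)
Your approach is exactly the paper's: define $S^{-1}$ as the pushforward of $S$ under $(t,x)\mapsto(1-t,x)$ and read off all the claimed properties from Lemma~\ref{lem:rescale}. Your version is in fact more careful, since the paper's one-line proof writes $S^{-1}:=a_*S$ without the global minus sign, whereas your boundary computation shows that $\partial(a_*S)=\delta_0\times T_1-\delta_1\times T_0$, so the sign in $S^{-1}:=-a_*S$ is indeed needed (and, as you note, is invisible to $\Var$, the slice masses, and the Lipschitz condition).
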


\begin{proof}
We set
\[
  S^{-1} := a_* S,  \qquad
  a(t,x) := (1-t,x)
\]
and again conclude by Lemma~\ref{lem:rescale}.
\end{proof}

\subsection{Deformation theorem} \label{sc:deformation_thm}

In this section we establish a version of the deformation theorem (see~\cite[4.2.9]{Federer69book} or~\cite[Section~7.7]{KrantzParks08book}) that is adapted to our BV-theory of integral currents. Let us emphasize that this theorem requires the current being approximated to be integral and boundaryless. Also recall our standing assumption that $\Omega \subset \R^d$ is a bounded Lipschitz domain.

\begin{theorem}[Deformation theorem] \label{thm:defo}
Let $T \in \Irm_k(\cl{\Omega})$ with $\partial T = 0$. Then, for all $\rho > 0$ there exists $S \in \Irm^\Lip_{1+k}([0,1] \times \cl{\Omega'})$, where $\Omega' := \Omega + B(0,(\sqrt{d}+1)\rho)$, such that
\[
  \partial S = \delta_1 \times P - \delta_0 \times T, \qquad
  P = \sum_{F \in \Fcal_k(\rho)} p_F \dbr{F},  \qquad
  \partial P = 0.
\]
Here, $\dbr{F}$ is the integral current associated to an oriented $k$-face $F \in \Fcal_k(\rho)$ of one of the cubes $\rho z + (0,\rho)^d$ (with unit multiplicity and a fixed choice of orientation), $z \in \Z^d$, and $p_F \in \Z$. Moreover,
\begin{align*}
  \Mbf(P) &\leq C \Mbf(T), \\
  \Var(S) &\leq C \rho \Mbf(T), \\
  \esssup_{t \in [0,1]} \, \Mbf(S(t)) &\leq C \Mbf(T).
\end{align*}
Here, the constant $C > 0$ depends only on the dimensions.
\end{theorem}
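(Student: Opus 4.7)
The strategy is to realise the classical Federer--Fleming deformation construction (see \cite[4.2.9]{Federer69book}) as a genuine space-time current in $\Lip([0,1];\Irm_k(\cl{\Omega'}))$, thereby upgrading the classical homotopy chain $Q$ to a Lip-in-time trajectory. The hypothesis $\partial T = 0$ is crucial: it causes the boundary correction terms in the classical statement to vanish, producing the exact identity $\partial S = \delta_1 \times P - \delta_0 \times T$ (in particular, $P$ is itself boundaryless and no residual $R$ is present).

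I would fix a cubic grid of side $\rho$ on $\R^d$ and proceed in $d-k$ stages indexed by $i = d, d-1, \ldots, k+1$, each occupying a subinterval $[t_i, t_{i-1}]$ of $[0,1]$, with $t_d = 0$ and $t_k = 1$. Setting $T_d := T$, at stage $i$ one picks, for each open $i$-face $F$ of the grid, a center $c_F \in \interior F$ and defines $\pi_i \colon \R^d \setminus \{c_F\}_F \to \R^d$ by radial projection onto the $(i-1)$-skeleton. Federer's averaging argument over admissible centers (Fubini combined with a Chebyshev-type bound on the Jacobian of radial projection) yields a selection satisfying, with $C$ depending only on the dimensions,
\[
  \norm{\pi_i - \id}_\infty \leq \sqrt{d}\,\rho, \qquad
  \int \abs{D\pi_i}^k \dd \tv{T_i} \leq C\Mbf(T_i), \qquad
  \Mbf(T_{i-1}) \leq C\Mbf(T_i),
\]
where $T_{i-1} := \pi_{i,*}T_i$. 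Since $\partial T_i = 0$ is propagated along the iteration by \eqref{eq:pushforward_bdry}, the current $P := T_k$ is an integral polyhedral chain supported on the $k$-skeleton, with $\partial P = 0$ and $\Mbf(P) \leq C\Mbf(T)$.

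I would then lift each stage to a space-time current via the affine homotopy
\[
  H_i(t,x) := \frac{t_{i-1}-t}{t_{i-1}-t_i}\,x + \frac{t-t_i}{t_{i-1}-t_i}\,\pi_i(x), \qquad t \in [t_i,t_{i-1}],
\]
setting $S_i := \bar{H}_{i,*}\bigl(\dbr{(t_i,t_{i-1})} \times T_i\bigr)$ with $\bar H_i(t,x) := (t,H_i(t,x))$. Applying Lemma~\ref{lem:aff_hom} with $f = \id$ and $g = \pi_i$, and using the integral form of $V^k$ together with the Jacobian bound above, yields $S_i \in \Lip([t_i,t_{i-1}];\Irm_k(\cl{\Omega'}))$ with $\Var(S_i) \leq C\rho\,\Mbf(T_i)$ and $\esssup_t \Mbf(S_i(t)) \leq C\Mbf(T_i)$. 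Concatenating the $S_i$ via Lemma~\ref{lem:concat} (after rescaling each subinterval to $[0,1]$ by Lemma~\ref{lem:rescale}) produces $S \in \Lip([0,1];\Irm_k(\cl{\Omega'}))$, and summing the stage-wise estimates against the iterated bound $\Mbf(T_i) \leq C^{d-i}\Mbf(T)$ gives the advertised $\Var(S) \leq C\rho\,\Mbf(T)$ and $\esssup_t \Mbf(S(t)) \leq C\Mbf(T)$. The boundary identity follows from \eqref{eq:product_bdry} and \eqref{eq:pushforward_bdry} once one notes that the homotopy correction $\bar H_{i,*}(\dbr{(t_i,t_{i-1})} \times \partial T_i)$ vanishes because $\partial T_i = 0$. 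The enlargement by $(\sqrt d + 1)\rho$ absorbs the cumulative $\sqrt d\,\rho$ displacement of each stage plus the interiors of the homotopy cones.

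The main obstacle is the center-selection step: ensuring that $\int \abs{D\pi_i}^k \dd \tv{T_i}$ is controlled by $C\Mbf(T_i)$ with $C$ independent of $T_i$. This is the classical Federer averaging argument, in which one integrates the pointwise Jacobian bound for radial projection over admissible centers within each face via Fubini, exploiting that $k$-th powers of the radial Jacobian are integrable with a dimensional constant, and then picks a simultaneously admissible center by Chebyshev. A secondary point worth flagging is that the boundary cubes meeting $\supp T$ produce no uncontrolled mass because $\partial T = 0$ eliminates the usual residual term $R$ with its proportionality to $\Mbf(\partial T)$.
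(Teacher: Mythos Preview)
Your proposal is correct and follows the same overall strategy as the paper---realise the Federer--Fleming construction as a concatenation of affine homotopies, then invoke Lemma~\ref{lem:aff_hom} and Lemma~\ref{lem:concat}---but the implementation differs in one structural respect. The paper (following~\cite[Sections~7.7--7.8]{KrantzParks08book}) uses exactly \emph{two} affine homotopies regardless of $d-k$: first an affine homotopy $H_1$ from $\id$ to a translation $t^a(x)=x+a$, with the vector $a$ (rather than the centres) chosen by the averaging argument so that $\int\abs{D\psi}^k\,d\tv{t^a_*T}\leq C\Mbf(T)$; then a single affine homotopy $H_2$ from $\id$ to the fixed retraction $\psi$ onto the $k$-skeleton. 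Your version instead fixes the grid and iterates $d-k$ radial projections $\pi_i$, selecting centres face-by-face at each stage. Both are standard variants of the deformation construction and both fit cleanly into the Lemma~\ref{lem:aff_hom}/\ref{lem:concat} framework. The paper's two-stage route gives slightly lighter bookkeeping (one averaging step, two concatenations) and makes the enlargement $(\sqrt d+1)\rho$ transparent (one $\rho$ from the translation, $\sqrt d\,\rho$ from the retraction within a cube); your iterated route avoids the auxiliary translation altogether and in fact confines the whole trajectory to the cubes meeting $\supp T$, so the enlargement you actually need is only $\sqrt d\,\rho$. A minor point worth being explicit about: $\pi_i$ is not globally Lipschitz, so when you apply Lemma~\ref{lem:aff_hom} you are using its integral form $V^k(f,g,T)=\int\abs{Df}^k+\abs{Dg}^k\,d\tv{T}$ rather than the $\Lrm^\infty$ bound---this is exactly what the centre-selection estimate provides, and is the same device the paper uses for $\psi$.
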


\begin{proof}
It suffices to prove the theorem for $\rho = 1$; the general case is reduced to $\rho = 1$ by scaling. Indeed, setting $r^\alpha(x) := \alpha x$ for $\alpha > 0$ and $x \in \R^d$, we may apply the result in the version for $\rho = 1$ to $\tilde{T} := r^{1/\rho}_* T$ (in a suitable domain) to obtain $\tilde{P},\tilde{S}$ as in the statement of the theorem for $\rho = 1$. Then, set $P := r^{\rho}_* \tilde{P}$, $S := r^{\rho}_* \tilde{S}$ (or, more verbosely, $S := [(t,x) \mapsto (t,\rho x)]_* \tilde{S}$). These $P,S$ satisfy the conclusion of the theorem for our $\rho$ since $P,T$ and $S(t)$ (for a.e.\ $t \in [0,1]$) have the same dimension $k$ and $S$ has dimension $1+k$, whereby
\[
  \Var(S) = \rho^{1+k} \Var(\tilde{S}) \leq C \rho^{1+k} \Mbf(\tilde{T}) = C \rho \Mbf(T).
\]
One quick way to see the first equality is to observe that
\[
  \Var(S)
  = \rho^{1+k} \Var\bigl( [(t,x) \mapsto (t/\rho,x/\rho)]_* S;[0,1/\rho] \bigr)
  = \rho^{1+k} \Var(\tilde{S})
\]
by the area formula and Lemma~\ref{lem:rescale}.

So, in the following let $\rho = 1$. Inspecting the proof of the standard deformation theorem, in the version of~\cite[Sections~7.7,~7.8]{KrantzParks08book}, say, we observe that in the present situation of \emph{boundaryless} integral currents the proof proceeds by constructing a homotopy from $T$ to a $P$ of the form
\[
  P = \sum_{F \in \Fcal_k(1)} p_F \dbr{F}
\]
with
\[
  \partial P = 0,  \qquad
  \Mbf(P) \leq C \Mbf(T).
\]
We remark in particular that in the last step of the proof of the deformation theorem (as in~\cite[Section~7.8]{KrantzParks08book}) we do not need to modify the retraction onto any $k$-face since $\partial T = 0$ (by the constancy theorem, see~\cite[Proposition~7.3.5]{KrantzParks08book}) and $P$ is indeed a homotopical image of $T$. The homotopy constructed is seen to be the concatenation of two affine homotopies: The first affine homotopy, call it $H_1$, goes from the identity to a translation $t^a(x) := x + a$ ($\abs{a} < 1$). The second affine homotopy, $H_2$, goes from the identity to the \enquote{radial} retraction $\psi$ onto the $k$-skeleton (defined in~\cite[Section~7.7]{KrantzParks08book}).

We have $H_1(1,\frarg)_* T = t^a_* T$ and
\[
  S_{H_1} := (\bar{H}_1)_* T \in \Irm^\Lip_{1+k}([0,1] \times \cl{\Omega + B(0,1)}),
\]
where $\bar{H}_1(t,x) := (t,H_1(t,x))$. From Lemma~\ref{lem:aff_hom} we obtain
\[
  \Var(S_{H_1}) \leq C \Mbf(T).
\]
Moreover, it can be shown (see~\cite[top of p.~218]{KrantzParks08book}) that $a$ may be chosen such that
\[
  \int \abs{D\psi}^k \dd \tv{t^a_* T} \leq C \Mbf(T).
\]
Thus, from Lemma~\ref{lem:aff_hom} we get for
\[
  S_{H_2} := (\bar{H}_2)_* [t^a_* T] \in \Irm^\Lip_{1+k}([0,1] \times \cl{\Omega + B(0,1 + \sqrt{d})}),
\]
where $\bar{H}_2(t,x) := (t,H_2(t,x))$, that also
\[
  \Var(S_{H_2}) \leq C \Mbf(T).
\]

Once we concatenate $S_{H_1}$ and $S_{H_2}$ via Lemma~\ref{lem:concat}, we obtain that for
\[
  S := S_{H_2} \circ S_{H_1} \in \Irm^\Lip_{1+k}([0,1] \times \cl{\Omega'})
\]
it holds that $\partial S = \delta_1 \times P - \delta_0 \times T$ and
\[
  \Var(S) \leq C \Mbf(T).
\]
The statement about the essential mass bound on $S(t)$ also follows from the estimates of Lemmas~\ref{lem:aff_hom},~\ref{lem:concat}. This finishes the proof.
\end{proof}

As a corollary, we obtain the following version of the isoperimetric inequality:

\begin{theorem}[Isoperimetric inequality] \label{thm:isop}
Let $T \in \Irm_k(\cl{\Omega})$, $k \geq 1$, with $\partial T = 0$. Then, there exists $S \in \Irm^\Lip_{1+k}([0,1] \times \cl{\Omega'})$, where $\Omega' := \Omega + B(0,C\Mbf(T)^{1/k})$, such that
\[
  \partial S = - \delta_0 \times T
\]
and
\[
  \Var(S;[0,1]) \leq C \Mbf(T)^{(k+1)/k},  \qquad
  \esssup_{t \in [0,1]} \, \Mbf(S(t)) \leq C \Mbf(T).
\]
Here, the constant $C > 0$ depends only on the dimensions.
\end{theorem}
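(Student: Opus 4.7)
The plan is to apply the deformation theorem (Theorem~\ref{thm:defo}) at a scale $\rho$ tuned so that the resulting polyhedral chain is forced to vanish by an integrality argument; the accompanying space-time current will then be the desired filling.

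First, we may assume $\Mbf(T) > 0$, for otherwise $T = 0$ and $S := 0$ trivially works. Let $C_0 > 0$ denote the constant furnished by Theorem~\ref{thm:defo}, and set
\[
  \rho := (2C_0\,\Mbf(T))^{1/k}.
\]
Applying Theorem~\ref{thm:defo} at this scale to $T$ (which is boundaryless by hypothesis) yields a current $S \in \Lip([0,1];\Irm_k(\cl{\Omega'}))$ with $\Omega' = \Omega + B(0,(\sqrt{d}+1)\rho) \subset \Omega + B(0, C\Mbf(T)^{1/k})$, together with a polyhedral chain $P = \sum_{F \in \Fcal_k(\rho)} p_F \dbr{F}$ such that $\partial S = \delta_1 \times P - \delta_0 \times T$, $\partial P = 0$, and
\[
  \Mbf(P) \leq C_0 \Mbf(T), \qquad \Var(S) \leq C_0 \rho \Mbf(T), \qquad \esssup_{t \in [0,1]} \Mbf(S(t)) \leq C_0 \Mbf(T).
\]

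The crucial step is to show that $P = 0$. Distinct $k$-faces $F \in \Fcal_k(\rho)$ meet only in sets of $\Hcal^k$-measure zero, so $\Mbf(P) = \sum_F \abs{p_F}\,\rho^k$ with each $\abs{p_F} \in \N \cup \{0\}$. On the other hand, by the choice of $\rho$,
\[
  \Mbf(P) \leq C_0 \Mbf(T) = \tfrac{1}{2}\rho^k < \rho^k,
\]
which, together with the integrality of the coefficients $p_F$, forces $p_F = 0$ for every $F$. Hence $P = 0$ and therefore $\partial S = -\delta_0 \times T$, as required. Plugging the explicit value of $\rho$ into the variation estimate then gives
\[
  \Var(S) \leq C_0 \rho \Mbf(T) = C_0 (2C_0 \Mbf(T))^{1/k} \Mbf(T) \leq C\,\Mbf(T)^{(k+1)/k},
\]
while the slice mass bound $\esssup_{t} \Mbf(S(t)) \leq C_0 \Mbf(T)$ is already in the required form.

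I do not foresee any serious obstacle: the proof reduces the BV-isoperimetric inequality to a direct combination of the BV-deformation theorem with the classical "mass below a single face" integrality trick. The only delicate point is the tuning of $\rho$, which must be chosen precisely so that the mass of $P$ drops strictly below $\rho^k$ so that integrality of the coefficients forces $P = 0$; this is exactly what produces the sharp exponent $(k+1)/k$.
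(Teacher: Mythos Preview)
Your proof is correct and follows essentially the same route as the paper: apply the deformation theorem with $\rho = (2C_0\Mbf(T))^{1/k}$, then use integrality of the face coefficients together with $\Mbf(P) \leq C_0\Mbf(T) = \tfrac12\rho^k$ to force $P=0$. The only cosmetic difference is that the paper phrases the integrality step as $\Mbf(P) = N(\rho)\rho^k$ for a nonnegative integer $N(\rho)$, whereas you write out $\Mbf(P) = \sum_F \abs{p_F}\rho^k$ explicitly; the argument is identical.
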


\begin{proof}
The proof is similar to the one for the classical isoperimetric inequality and follows immediately from the deformation theorem: Assuming that $T \neq 0$, we let $P,S$ as in the deformation theorem with
\[
  \rho := [2C \Mbf(T)]^{1/k},
\]
where $C > 0$ is the constant from said theorem. By a scaling argument, $\Mbf(P) = N(\rho)\rho^k$ for some nonnegative integer $N(\rho)$. From the estimates in the deformation theorem we have $\Mbf(P) \leq C \Mbf(T)$ and thus
\[
  N(\rho) \cdot 2C \Mbf(T) = \Mbf(P) \leq C \Mbf(T).
\]
So, $2N(\rho) \leq 1$, whereby $N(\rho) = 0$, and hence $P = 0$. This immediately yields all the claimed statements.
\end{proof}

\section{Deformation distance} \label{sc:defdist}

We now define a metric measuring the distance between two boundaryless integral $k$-currents via progressive-in-time deformations, namely Lip-integral currents. In all of the following, $\Omega \subset \R^d$ is a bounded Lipschitz domain.

For $T_0, T_1 \in \Irm_k(\cl{\Omega})$ with $\partial T_0 = \partial T_1 = 0$, the \term{(Lipschitz) deformation distance} between $T_0$ and $T_1$ is
\[
  \dist_{\Lip,\cl{\Omega}}(T_0,T_1) := \inf \setB{ \Var(S) }{ \text{$S \in \Irm^\Lip_{1+k}([0,1] \times \cl{\Omega})$ with $\partial S = \delta_1 \times T_1 - \delta_0 \times T_0$} }.
\]
That $\dist_{\Lip,\cl{\Omega}}(\frarg,\frarg) \colon \Irm_k(\cl{\Omega}) \times \Irm_k(\cl{\Omega}) \to [0,\infty]$ is positive definite, symmetric, and obeys the triangle inequality follows immediately from Lemmas~\ref{lem:concat},~\ref{lem:reversal} and the fact that $\Var(S) = 0$ for $S \in \Irm^\Lip_{1+k}([0,1] \times \cl{\Omega})$ with $\partial S = \delta_1 \times T_1 - \delta_0 \times T_0$ implies that $T_0 = T_1$. We remark that $\dist_{\Lip,\cl{\Omega}}(\frarg,\frarg)$ is not necessarily finite if $\cl{\Omega}$ has holes that can be detected by boundaryless integral $k$-currents.

\subsection{Equivalence theorem}

With regard to the notion of convergence induced by the (Lipschitz) deformation distance, we have the following result:

\begin{theorem}[Equivalence theorem] \label{thm:equiv}
For every $M > 0$ and $T_j,T$ ($j \in \N$) in the set
\[
  \setb{ T \in \Irm_k(\cl{\Omega}) }{ \partial T = 0, \; \Mbf(T) \leq M }
\]
the following equivalence holds (as $j \to \infty$):
\[
  \dist_{\Lip,\cl{\Omega}}(T_j,T) \to 0  \qquad\text{if and only if}\qquad   T_j \toweakstar T \quad \text{in $\Irm_k(\cl{\Omega})$}.
\]
Moreover, in this case, for all $j$ from a subsequence of the $j$'s, there are $S_j \in \Irm^\Lip_{1+k}([0,1] \times \cl{\Omega})$ with
\[
  \partial S_j = \delta_1 \times T - \delta_0 \times T_j, \qquad
  \dist_{\Lip,\cl{\Omega}}(T_j,T) \leq \Var(S_j) \to 0,
\]
and
\begin{equation} \label{eq:equiv_mass}
  \limsup_{j\to\infty} \, \; \esssup_{t \in [0,1]} \, \Mbf(S_j(t)) \leq C \cdot \limsup_{\ell \to \infty} \, \Mbf(T_\ell).
\end{equation}
Here, the constant $C > 0$ depends only on the dimensions and on $\Omega$.
\end{theorem}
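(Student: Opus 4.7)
I would split the proof into the two implications.

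For the forward direction $\dist_{\Lip,\cl\Omega}(T_j,T) \to 0 \Rightarrow T_j \toweakstar T$, the plan is elementary. Pick admissible $S_j$ with $\Var(S_j) \to 0$. For any test form $\omega \in \Dcal^k(\R^d)$, extend it to $\tilde\omega(t,x) := \omega(x)$ on $[0,1] \times \R^d$; both $\tilde\omega$ and $d\tilde\omega = d_x\omega$ have no $dt$-component, so only the spatial part $\pbf(\vec{S_j})$ pairs nontrivially with $d\tilde\omega$. The relation $\partial S_j = \delta_1 \times T - \delta_0 \times T_j$ then gives
\[
  \dprb{T-T_j,\omega} = \dprb{\partial S_j, \tilde\omega} = \dprb{S_j, d_x\omega} = \int \dprb{\pbf(\vec{S_j}), d\omega} \dd\tv{S_j},
\]
so $|\dprb{T-T_j,\omega}| \leq \|d\omega\|_\infty \Var(S_j) \to 0$; the uniform mass bound upgrades this to $T_j \toweakstar T$ in $\Irm_k(\cl\Omega)$.

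For the reverse direction, I would combine Proposition~\ref{prop:flat} with Theorem~\ref{thm:defo} and a rank/linearity trick. By Proposition~\ref{prop:flat}, $\Fbf(T_j - T) \to 0$, so fix a flat decomposition $T_j - T = \partial Q^*_j + R^*_j$ with $\Mbf(Q^*_j) + \Mbf(R^*_j) \to 0$; note $R^*_j$ is boundaryless since $T_j - T$ is. Apply Theorem~\ref{thm:defo} to the boundaryless integral current $T_j - T$ at a scale $\rho_j > 0$ to be chosen, obtaining $S^\flat_j \in \Lip([0,1]; \Irm_k(\cl{\Omega'_j}))$ ($\Omega'_j := \Omega + B(0,(\sqrt{d}+1)\rho_j)$) with
\[
  \partial S^\flat_j = \delta_1 \times P^\flat_j - \delta_0 \times (T_j - T), \quad \Var(S^\flat_j) \leq C\rho_j\Mbf(T_j-T), \quad \esssup_t \Mbf(S^\flat_j(t)) \leq C\Mbf(T_j - T),
\]
where $P^\flat_j = \psi_* t^a_*(T_j - T)$ for the radial retraction $\psi$ onto the $k$-skeleton and a translation $t^a$ appearing in the proof of Theorem~\ref{thm:defo}. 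The crucial observation is that $\psi$ takes values in a $k$-dimensional set, so $D\psi$ has rank at most $k$ almost everywhere; hence $\Wedge^{k+1} D\psi$ vanishes identically and $\psi_* t^a_* Q^*_j = 0$ as a $(k+1)$-current (for any choice of $a$). Combining this with the linearity of $\psi_* t^a_*$ and the commutation $\partial \circ \psi_* t^a_* = \psi_* t^a_* \circ \partial$ yields
\[
  P^\flat_j = \partial(\psi_* t^a_* Q^*_j) + \psi_* t^a_* R^*_j = \psi_* t^a_* R^*_j,
\]
a polyhedral $k$-chain at scale $\rho_j$ whose mass is at most $C\Mbf(R^*_j)$ (the translation $a$ is chosen in the standard full-measure good set that makes the deformation-theorem mass bounds simultaneously applicable to $T_j - T$ and $R^*_j$). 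Since a nonzero integer polyhedral $k$-chain at scale $\rho_j$ has mass at least $\rho_j^k$, the choice $\rho_j := 2(C\Mbf(R^*_j))^{1/k} \to 0$ forces $P^\flat_j = 0$, so $\partial S^\flat_j = \delta_0 \times T - \delta_0 \times T_j$ and $\Var(S^\flat_j) \leq 2CM\rho_j \to 0$.

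To upgrade $S^\flat_j$ to a deformation from $T_j$ to $T$ supported in $\cl\Omega$, set $S_j := S^\flat_j + \dbr{(0,1)} \times T$. The cylinder $\dbr{(0,1)} \times T$ has orientation $\ee_0 \wedge \vec T$ (annihilated by $\pbf$), so its variation vanishes, and $\partial(\dbr{(0,1)} \times T) = \delta_1 \times T - \delta_0 \times T$ (because $\partial T = 0$); the boundaries add to $\partial S_j = \delta_1 \times T - \delta_0 \times T_j$, and $\Var(S_j) \leq \Var(S^\flat_j) \to 0$. The four Lip conditions transfer routinely to the sum (no atoms at $\{0,1\}$; Lipschitz variation inherited from $S^\flat_j$; essentially bounded slice masses via $\Mbf(S_j(t)) \leq \Mbf(S^\flat_j(t)) + \Mbf(T)$; vanishing open-interval boundary variation since $\partial S_j$ sits on $\{0,1\} \times \R^d$). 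Finally, to bring everything inside $\cl\Omega$, push forward under $\tilde r(t,x) := (t, r(x))$, where $r : U \to \cl\Omega$ is the Lipschitz retraction from Remark~\ref{rem:retract}, defined on a neighborhood $U$ of $\cl\Omega$ that contains $\cl{\Omega'_j}$ for all $j$ large. Since $r = \id$ on $\cl\Omega$, $\partial(\tilde r_* S_j) = \delta_1 \times T - \delta_0 \times T_j$; standard pushforward estimates give $\Var(\tilde r_* S_j) \leq C' \Var(S_j) \to 0$ and $\esssup_t \Mbf(\tilde r_* S_j(t)) \leq C'(\Mbf(S^\flat_j(t)) + \Mbf(T))$, which, together with the lower semicontinuity $\Mbf(T) \leq \liminf_j \Mbf(T_j)$, yields \eqref{eq:equiv_mass} with a constant depending only on the dimensions and on $\Omega$ (through $\Lip(r)$).

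The hard part, and the reason the argument needs more than a direct appeal to Theorem~\ref{thm:defo}, is that Theorem~\ref{thm:defo} alone only guarantees $\Mbf(P^\flat_j) \leq C\Mbf(T_j - T) = O(M)$ rather than $o(1)$. It is precisely the combination of the flat decomposition $T_j - T = \partial Q^*_j + R^*_j$ with the rank deficiency of $\psi$ that forces $P^\flat_j = \psi_* t^a_* R^*_j$ and thus $P^\flat_j = 0$ at the chosen scale $\rho_j \to 0$; without this, one cannot close the deformation at $t = 1$ without a leftover polyhedral piece of non-vanishing variation cost.
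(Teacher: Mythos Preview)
Your forward direction is fine and essentially the paper's argument made explicit.

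Your reverse direction has a genuine gap in the ``rank argument''. You claim that since the radial retraction $\psi$ onto the $k$-skeleton takes values in a $k$-dimensional set, $\Wedge^{k+1}D\psi$ vanishes, so $\psi_* t^a_* Q^*_j = 0$ and hence $\psi_* t^a_*(\partial Q^*_j) = \partial(\psi_* t^a_* Q^*_j) = 0$. But $\psi$ is \emph{not Lipschitz}: it blows up at the centres of the $(k+1)$-faces. The pushforward $\psi_* t^a_* Q^*_j$ is therefore not defined via the standard theory, and the commutation $\psi_* \circ \partial = \partial \circ \psi_*$ genuinely \emph{fails}. Take $Q$ to be a single $(k+1)$-face of the grid (untranslated): then $\psi|_{\partial Q} = \id$, so $\psi_*(\partial Q) = \partial Q \neq 0$, while any formal reading of ``$\psi_* Q$'' via the area formula gives zero. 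The homotopy formula that would justify the commutation breaks down precisely because the homotopy trace of $Q$ has infinite mass near the centre, and a generic translation does not cure this: the averaging estimate in the deformation theorem controls $\int |D\psi|^k$ against $k$-currents, not $\int |D\psi|^{k+1}$ against $(k+1)$-currents (the latter integral diverges on each $(k+1)$-face).

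The gap is repairable, though not by the rank trick. Federer's full deformation theorem~\cite[4.2.9]{Federer69book} records that the polyhedral $k$-approximation of $\partial Q^*_j$ equals $\partial P_Q$, where $P_Q$ is the polyhedral $(k+1)$-approximation of $Q^*_j$ obtained by retracting only to the $(k+1)$-skeleton (for which the averaging over $a$ \emph{does} give finiteness), with $\Mbf(P_Q) \leq C\Mbf(Q^*_j)$. Choosing $a$ in the full-measure intersection of the good sets for $T_j-T$, $R^*_j$ and $Q^*_j$, linearity yields $P^\flat_j = P^R_j + \partial P_Q$. Taking $\rho_j := 2\max\bigl((C\Mbf(R^*_j))^{1/k}, (C\Mbf(Q^*_j))^{1/(k+1)}\bigr) \to 0$ forces both $P^R_j = 0$ and $P_Q = 0$, hence $P^\flat_j = 0$, and the rest of your argument goes through.

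With this fix your route is genuinely different from, and more direct than, the paper's. The paper proceeds via a pigeonhole argument (the polyhedral approximations at each scale with mass $\leq CM$ form a finite set, so infinitely many $T_j$ share one), a diagonalisation to arrange $\dist_{\Lip,\cl\Omega}(T_j, T_{j+1}) < 2^{-j}$ along a subsequence, concatenation of the resulting deformations $T_j \to T_{j+1} \to T_{j+2} \to \cdots$, and finally the Helly-type compactness Theorem~\ref{thm:current_Helly} to pass to the limit $m\to\infty$ and land on~$T$. Your construction avoids the compactness step entirely, building the Lip-deformation $T_j \to T$ in one shot from the flat decomposition; this also delivers the witnesses $S_j$ for all large $j$ rather than only along a subsequence.
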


\begin{proof}
For the first direction, assume $\dist_{\Lip,\cl{\Omega}}(T_j,T) \to 0$. By~\eqref{eq:PoincareF} we have $\Fbf(T_j-T) \leq \Var(S)$ for any $S \in \Irm^\Lip_{1+k}([0,1] \times \cl{\Omega})$ with $\partial S = \delta_1 \times T_j - \delta_0 \times T_0$. Here, we remark that we do not require the boundary variation since the only contributions to $\Var(\partial S)$ are at the endpoints $0,1$, but we can restrict to the open interval $(0,1)$ and use the right and left limits, cf.~\eqref{eq:partialS_trace}. Thus,
\[
  \Fbf(T_j-T) \leq \dist_{\Lip,\cl{\Omega}}(T_j,T) \to 0.
\]
Then, the claim $T_j \toweakstar T$ follows from Proposition~\ref{prop:flat}, or directly as follows: For $\omega \in \Dcal^k(\R^d)$,
\[
  \absb{\dprb{T_j - T,\omega}}
  \leq \Fbf(T_j-T) \cdot \max \bigl\{ \norm{\omega}_\infty, \norm{d\omega}_\infty \bigr\}
  \leq \dist_{\Lip,\cl{\Omega}}(T_j,T) \cdot \max \bigl\{ \norm{\omega}_\infty, \norm{d\omega}_\infty \bigr\}
  \to 0.
\]

For the other direction, assume $T_j \toweakstar T$ in $\Irm_k(\cl{\Omega})$ with $\partial T_j = \partial T = 0$ and $M := \sup_j \Mbf(T_j) < \infty$. We need to show that
\begin{equation} \label{eq:distH0}
  \dist_{\Lip,\cl{\Omega}}(T_j,T) \to 0.
\end{equation}

The first step is to observe that for all $N \in \N$ sufficiently large there exists a finite collection $\Pbf_N \subset \Irm_k(\cl{\Omega})$ such that for all $\hat{T} \in \Irm_k(\cl{\Omega})$ with $\partial \hat{T} = 0$ and $\Mbf(\hat{T}) \leq M$ it holds that
\begin{equation} \label{eq:distH_small}
   \text{$\dist_{\Lip,\cl{\Omega}}(\hat{T},P) < C2^{-N}$ for some $P \in \Pbf_N$,}
\end{equation}
where the constant $C > 0$ and the lower bound for $N$ depend only on the dimensions and the domain $\Omega$. We first claim that~\eqref{eq:distH_small} holds (with $C = 1$ and for all $N \in \N$) for $\dist_{\Lip,\cl{\Omega'}}$, where we have set $\Omega' := \Omega + B(0,(\sqrt{d}+1)\rho) \Supset \Omega$ is as in our deformation theorem, Theorem~\ref{thm:defo}, with $\rho := 2^{-N}/(C_{d,k}M)$ (with $C_{d,k}$ the constant from the deformation theorem). Indeed, for $\Pbf_N$ we take the collection of all polyhedral chains $P$ that can possibly satisfy the conclusion of the deformation theorem for a $\hat{T}$ as above, which is clearly a finite set. Thus,~\eqref{eq:distH_small} is established in $\cl{\Omega'}$. 

Next, for $N$ sufficiently large (how large only depending on $\Omega$), we may retract $\cl{\Omega'}$ to $\cl{\Omega}$. In this context recall that $\Omega$ is always assumed to be a bounded Lipschitz domain and hence a Lipschitz neighborhood retract, see Remark~\ref{rem:retract}. Thus,~\eqref{eq:distH_small} also holds for $\dist_{\Lip,\cl{\Omega}}$ and with $\Pbf_N$ containing the retracts of the polyhedral chains. Note that the retraction itself only contributes a bounded factor to the estimate of the variation.

Returning to our sequence $(T_j)$, for every $N \in \N$ sufficiently large we find a $P \in \Pbf_N$ such that $\dist_{\Lip,\cl{\Omega}}(T_j,P) < C2^{-N}$ for infinitely many $j$'s. Applying this argument repeatedly and selecting a subsequence at every step (such that the constraint holds for all elements of that subsequence), we may find a diagonal subsequence, still denoted by $(T_j)$, such that $\dist_{\Lip,\cl{\Omega}}(T_\ell,P_j) < 2^{-(j+1)}$ for all $\ell \geq j$ and a $P_j \in \bigcup_N \Pbf_N \subset \Irm_k(\cl{\Omega})$ (by the construction above $P_j$ is the Lipschitz retract of a polyhedral chain). Then, via the triangle inequality,
\[
  \dist_{\Lip,\cl{\Omega}}(T_j,T_{j+1}) < 2^{-j}.
\]
Hence, there exists an $R_j \in \Irm^\Lip_{1+k}([0,1] \times \cl{\Omega})$ with
\[
  \partial R_j = \delta_1 \times T_{j+1} - \delta_0 \times T_j,  \qquad
  \Var(R_j;[0,1]) < 2^{-j}.
\]
Using the space-time currents constructed in the proof of the deformation theorem as witnesses for $\dist_{\Lip,\cl{\Omega}}(T_j,P_j) < 2^{-(j+1)}$ and $\dist_{\Lip,\cl{\Omega}}(P_j,T_{j+1}) < 2^{-(j+1)}$ and concatenating them via Lemma~\ref{lem:concat} to obtain $R_j$, we may further require
\[
  \esssup_{t \in [0,1]} \, \Mbf(R_j(t)) \leq C \cdot \max \bigl\{ \Mbf(T_j),\Mbf(T_{j+1}) \bigr\}.
\]

For the concatenation of the $R_\ell$ for $\ell = j,\ldots,j+m-1$, that is,
\[
  S_j^m := R_{j+m-1} \circ R_{j+m-2} \circ \cdots \circ R_j,
\]
see again Lemma~\ref{lem:concat}, it holds that
\[
  \partial S_j^m = \delta_1 \times T_{j+m} - \delta_0 \times T_j.
\]
and
\begin{align*}
  \Var(S_j^m;[0,1]) &= \sum_{\ell=0}^{m-1} \Var(R_{j+\ell};[0,1]) \leq 2^{-j+1}, \\
  \Var(\partial S_j^m;[0,1]) &= \Mbf(T_j) + \Mbf(T_{j+m}) \leq 2M, \\
  \esssup_{t \in [0,1]} \, \Mbf(S_j^m(t)) &\leq C \cdot \max_{\ell = j,\ldots,j+m} \, \Mbf(T_\ell) \leq C \cdot \sup_{\ell \geq j} \, \Mbf(T_\ell).
\end{align*}
Moreover, via Lemma~\ref{lem:rescale} we may rescale $S_j^m$ in time (which we do not make explicit in our notation) to assume
\[
  \Var(S_j^m;[0,t]) = t \Var(S_j^m;[0,1]),  \qquad t \in [0,1].
\]
In this way, also the Lipschitz constants of $S_j^m$ are uniformly in $m$ bounded by $2^{-j+1}$.

We now pass to the limit $m \to \infty$. Via Theorem~\ref{thm:current_Helly} this yields $S_j \in \Irm^\Lip_{1+k}([0,1] \times \cl{\Omega})$ with
\[
  \partial S_j = \delta_1 \times \Bigl( \wslim_{m\to\infty} T_{j+m} \Bigr) - \delta_0 \times T_j = \delta_1 \times T - \delta_0 \times T_j
\]
and
\begin{align*}
  \Var(S_j;[0,1]) &\leq 2^{-j+1}, \\
  \esssup_{t \in [0,1]} \, \Mbf(S_j(t)) &\leq C \cdot \sup_{\ell \geq j} \, \Mbf(T_\ell).
\end{align*}
Our $S_j$ is admissible in the definition of the metric $\dist_{\Lip,\cl{\Omega}}(\frarg,\frarg)$ and so,
\[
  \dist_{\Lip,\cl{\Omega}}(T_j,T) \leq \Var(S_j) \to 0  \qquad\text{as $j \to \infty$.}
\]
In this way we can find for every subsequence of the original sequence $(T_j)$ (before taking the repeated subsequences above) a further subsequence that converges in the $\dist_{\Lip,\cl{\Omega}}$-metric to $T$. Hence, also $\dist_{\Lip,\cl{\Omega}}(T_j,T) \to 0$ for the original sequence, proving our claim~\eqref{eq:distH0}.  

Finally, taking the upper limit of the mass estimate,
\[
  \limsup_{j\to\infty} \, \; \esssup_{t \in [0,1]} \, \Mbf(S_j(t)) \leq C \cdot \limsup_{\ell \to \infty} \, \Mbf(T_\ell).
\]
This finishes the proof.
\end{proof}

\subsection{Equality theorem}

Finally, we investigate the relationship of the deformation distance to the \term{integral homogeneous Whitney flat norm} in the bounded Lipschitz domain $\Omega \subset \R^d$, which for $T \in \Irm_k(\cl{\Omega})$ with $\partial T = 0$ is defined as
\[
  \Fbb_{\cl{\Omega}}(T) := \inf \, \setB{ \Mbf(Q) }{ \text{$Q \in \Irm_{k+1}(\cl{\Omega})$ with $\partial Q = T$} }.
\]

We first record the following lemma on the relationship between the different notions of convergences we have encountered so far.

\begin{lemma} \label{lem:conv}
For every $M > 0$ and $T_j,T$ ($j \in \N$) in the set
\[
  \setb{ T \in \Irm_k(\cl{\Omega}) }{ \partial T = 0, \; \Mbf(T) \leq M }
\]
the following are equivalent (as $j \to \infty$):
\begin{enumerate}[(i)]
\item $\dist_{\Lip,\cl{\Omega}}(T_j,T) \to 0$; \label{it:conv_dist}
\item $T_j \toweakstar T$; \label{it:conv_w*}
\item $\Fbf(T-T_j) \to 0$; \label{it:conv_F}
\item $\Fbb_{\cl{\Omega}}(T-T_j) \to 0$. \label{it:conv_F0}
\end{enumerate}
\end{lemma}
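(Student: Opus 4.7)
The plan is to close a cycle of implications $\eqref{it:conv_dist} \Rightarrow \eqref{it:conv_F0} \Rightarrow \eqref{it:conv_F} \Rightarrow \eqref{it:conv_w*} \Rightarrow \eqref{it:conv_dist}$, almost all of which are either already established or follow immediately from the definitions.

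First, the equivalence $\eqref{it:conv_dist} \Leftrightarrow \eqref{it:conv_w*}$ is precisely the content of the preceding equivalence theorem, Theorem~\ref{thm:equiv}. Second, since $\partial T_j = \partial T = 0$ and $\Mbf(T_j) \leq M$, the uniform mass-plus-boundary-mass bound required by Proposition~\ref{prop:flat} is satisfied, and so $\eqref{it:conv_w*} \Leftrightarrow \eqref{it:conv_F}$. Third, $\eqref{it:conv_F0} \Rightarrow \eqref{it:conv_F}$ is trivial from the definitions: any $Q \in \Irm_{k+1}(\cl{\Omega})$ with $\partial Q = T - T_j$ that is admissible in $\Fbf_{0,\cl{\Omega}}$ is also admissible (together with $R = 0$) in $\Fbf$, so $\Fbf(T-T_j) \leq \Fbf_{0,\cl{\Omega}}(T-T_j)$.

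The only remaining link is $\eqref{it:conv_dist} \Rightarrow \eqref{it:conv_F0}$. Given any $S \in \Lip([0,1];\Irm_k(\cl{\Omega}))$ with $\partial S = \delta_1 \times T - \delta_0 \times T_j$, set
\[
  Q_j := \pbf_* S \in \Irm_{k+1}(\cl{\Omega}).
\]
By~\eqref{eq:pushforward_bdry} we have
\[
  \partial Q_j = \pbf_*(\partial S) = \pbf_*(\delta_1 \times T) - \pbf_*(\delta_0 \times T_j) = T - T_j,
\]
and the computation leading up to~\eqref{eq:FVar_est} gives $\Mbf(Q_j) \leq \Var(S;[0,1])$. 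Hence $Q_j$ is admissible for $\Fbf_{0,\cl{\Omega}}(T-T_j)$, and taking the infimum over all such $S$ yields $\Fbf_{0,\cl{\Omega}}(T-T_j) \leq \dist_{\Lip,\cl{\Omega}}(T_j,T)$, which tends to zero by assumption.

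There is no genuine obstacle here — the substance of the lemma is already packaged into Theorem~\ref{thm:equiv}. The only thing to verify is that the spatial projection of a Lipschitz-in-time space-time witness produces a valid competitor for the boundaryless flat norm with the right mass bound, which is exactly what the variation was designed to control.
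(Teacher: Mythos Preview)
Your proof is correct and follows essentially the same route as the paper: both invoke Theorem~\ref{thm:equiv} for \eqref{it:conv_dist}$\Leftrightarrow$\eqref{it:conv_w*}, Proposition~\ref{prop:flat} for \eqref{it:conv_w*}$\Leftrightarrow$\eqref{it:conv_F}, the trivial inequality $\Fbf \leq \Fbf_{0,\cl{\Omega}}$ for \eqref{it:conv_F0}$\Rightarrow$\eqref{it:conv_F}, and the pushforward $Q = \pbf_* S$ together with~\eqref{eq:FVar_est} for \eqref{it:conv_dist}$\Rightarrow$\eqref{it:conv_F0}. The only difference is that the paper references the last step as already contained in the trivial direction of Theorem~\ref{thm:equiv}, whereas you spell it out explicitly.
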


\begin{proof}
The equivalence of~\ref{it:conv_dist} and~\ref{it:conv_w*} was proved in Theorem~\ref{thm:equiv}, while the equivalence of~\ref{it:conv_w*} and~\ref{it:conv_F} is the content of Proposition~\ref{prop:flat}. In fact, the proof of (the trivial direction of) Theorem~\ref{thm:equiv} even yields that~\ref{it:conv_dist} implies~\ref{it:conv_F0}. Finally, $\Fbf \leq \Fbb_{\cl{\Omega}}$, so~\ref{it:conv_F0} implies~\ref{it:conv_F} and we have closed the circle of implications.
\end{proof}

\begin{remark}
For the \emph{global} Whitney flat norms $\Fbf$ and $\Fbb := \Fbb_{\R^d}$ one may observe the inequalities
\begin{equation} \label{eq:F0F}
  \Fbf(T) \leq \Fbb(T) \leq C (\Fbf(T) + \Fbf(T)^{(k+1)/k})
\end{equation}
for all $T \in \Irm_k(\R^d)$ with $\partial T = 0$, where $C > 0$ is a dimensional constant. Indeed, the first inequality is trivial and for the second one writes $T = \partial Q + R$ for $Q \in \Irm_{k+1}(\R^d)$, $R \in \Irm_k(\R^d)$ with $\Mbf(Q) + \Mbf(R) \leq 2\Fbf(T)$. Then, $\partial R = \partial T - \partial \partial Q = 0$, and so, by the classical isoperimetric inequality (see, e.g.,~\cite[Theorem~7.9.1]{KrantzParks08book} or~\cite[4.2.10]{Federer69book}), there is $Q' \in \Irm_{k+1}(\R^d)$ with $\partial Q' = R$ and $\Mbf(Q') \leq C \Mbf(R)^{(k+1)/k}$. For $\tilde{Q} := Q + Q'$ we then have $\partial \tilde{Q} = T - R + \partial Q' = T$ and thus
\[
  \Fbb(T)
  \leq \Mbf(\tilde{Q}) \leq \Mbf(Q) + C \Mbf(R)^{(k+1)/k}
  \leq C (\Fbf(T) + \Fbf(T)^{(k+1)/k})
\]
with a different (but still dimensional) constant $C > 0$. This shows~\eqref{eq:F0F}. However, the second inequality in~\eqref{eq:F0F} with $\Fbb_{\cl{\Omega}}$ in place of the global $\Fbb$ may not hold.
\end{remark}

We can now prove the main result of this section, namely that the integral homogeneous Whitney flat distance is equal to the deformation distance.

\begin{theorem}[Equality theorem] \label{thm:equal}
For $T_0, T_1 \in \Irm_k(\cl{\Omega})$ with $\partial T_0 = \partial T_1 = 0$ it holds that
\[
  \dist_{\Lip,\cl{\Omega}}(T_0,T_1) = \Fbb_{\cl{\Omega}}(T_1 - T_0).
\]
\end{theorem}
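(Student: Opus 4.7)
The easy inequality $\Fbf_{0,\cl\Omega}(T_1-T_0) \leq \dist_{\Lip,\cl\Omega}(T_0,T_1)$ follows by spatial pushforward: for any admissible $S \in \Lip([0,1];\Irm_k(\cl\Omega))$, the current $Q := \pbf_* S \in \Irm_{k+1}(\cl\Omega)$ satisfies $\partial Q = \pbf_*(\delta_1 \times T_1 - \delta_0 \times T_0) = T_1 - T_0$, while estimate~\eqref{eq:FVar_est} gives $\Mbf(Q) \leq \Var(S)$; taking the infimum over admissible $S$ yields the bound.

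For the reverse inequality, my plan is to first treat the polyhedral case by an explicit \emph{lifting} construction and then reduce the general case to it via Proposition~\ref{prop:approx}. Given integer polyhedral chains $P \in \mathrm{IP}_{k+1}$ with $\partial P = P_1 - P_0 \in \mathrm{IP}_k$ boundaryless, I choose a generic direction $e \in \R^d$ so that the affine function $h(x) := \alpha + \beta(e \cdot x)$, normalised so that $h \colon \cl\Omega \to [\eta, 1-\eta]$ for some small $\eta > 0$, has tangential gradient $\abs{\nabla^R h} \geq c > 0$ on every simplex appearing in $P$, $P_0$, $P_1$. Setting $\Phi(x) := (h(x), x)$, I define
\[
  S := \Phi_* P + V_0 + V_1,
\]
with the \emph{purely vertical transient cylinders} $V_0 := \dbr{(0,1)} \times P_0 \restrict \{s \leq h(x)\}$ and $V_1 := \dbr{(0,1)} \times P_1 \restrict \{s \geq h(x)\}$. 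The cylinder formula~\eqref{eq:cylinder} together with $\partial P_i = 0$ gives $\partial V_0 = \Phi_* P_0 - \delta_0 \times P_0$ and $\partial V_1 = \delta_1 \times P_1 - \Phi_* P_1$, whence $\partial S = \delta_1 \times P_1 - \delta_0 \times P_0$. Since $\pbf \circ \Phi = \id$ and $\Phi|_{\supp P}$ is injective, the area formula (Proposition~\ref{prop:area}) gives $\Var(\Phi_* P) = \Mbf(P)$ exactly; meanwhile the verticality $\pbf(\ee_0 \wedge \vec{P_i}) = 0$ forces $\Var(V_i) = 0$, and for generic $e$ the overlaps of the three summands are $\Hcal^{k+1}$-negligible, so $\Var(S) = \Mbf(P)$. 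The polyhedral structure combined with $\abs{\nabla^R h} \geq c > 0$ and the coarea formula (Proposition~\ref{prop:coarea}) yields uniformly bounded slices $\esssup_t \Mbf(S(t)) < \infty$ and Lipschitz continuity of $t \mapsto \Var(S;[0,t])$; the conditions on $\partial S$ hold trivially since $\partial S$ is concentrated at $t = 0,1$. Hence $S \in \Lip([0,1];\Irm_k)$ in any domain containing the supports, with $\Var(S) = \Mbf(P)$.

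For the general case, fix $\eps > 0$ and $Q \in \Irm_{k+1}(\cl\Omega)$ with $\partial Q = T_1 - T_0$ and $\Mbf(Q) < \Fbf_{0,\cl\Omega}(T_1-T_0) + \eps$. Apply Proposition~\ref{prop:approx} twice: first to each $T_i$ to get polyhedral $P_i^{(n)} \in \mathrm{IP}_k(\cl{\Omega_n})$ via $T_i = \partial W_i^{(n)} + P_i^{(n)}$ with $\Mbf(W_i^{(n)}) < 2^{-n}$, where $\Omega_n := \Omega + B(0, C2^{-n})$; and then to the modified current $\tilde Q^{(n)} := Q - W_1^{(n)} + W_0^{(n)}$, whose boundary $P_1^{(n)} - P_0^{(n)}$ is polyhedral, producing a polyhedral $\hat P^{(n)}$ with $\partial \hat P^{(n)} = P_1^{(n)} - P_0^{(n)}$ and $\Mbf(\hat P^{(n)}) \leq \Mbf(Q) + 3 \cdot 2^{-n}$. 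The polyhedral construction yields $S^{(n)} \in \Lip([0,1];\Irm_k(\cl{\Omega_n}))$ with $\Var(S^{(n)}) = \Mbf(\hat P^{(n)})$. Composing with a Lipschitz retraction $r_n \colon \cl{\Omega_n} \to \cl\Omega$ that is the identity on $\cl\Omega$ (Remark~\ref{rem:retract}) with uniform Lipschitz constant $L_0$, and combining with $\dist_{\Lip,\cl\Omega}(T_i, r_{n*} P_i^{(n)}) \to 0$ from Theorem~\ref{thm:equiv} (since $r_{n*}P_i^{(n)} - T_i = -r_{n*}\partial W_i^{(n)}$ has flat norm tending to $0$), the triangle inequality produces
\[
  \dist_{\Lip,\cl\Omega}(T_0, T_1) \leq \Mbf(\hat P^{(n)}) + (L_0^{k+1}-1)\,\Mbf(Y_n) + o(1) \qquad \text{as } n \to \infty,
\]
where $Y_n := \hat P^{(n)} \restrict (\cl{\Omega_n} \setminus \cl\Omega)$ is the \enquote{excess} of $\hat P^{(n)}$ outside $\cl\Omega$.

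The main obstacle will be showing $\Mbf(Y_n) \to 0$, so that the retraction's multiplicative factor $L_0^{k+1}$ does not spoil the desired sharp bound $\Mbf(Q)$. Since $\hat P^{(n)}$ arises from $\tilde Q^{(n)}$ by a deformation of scale $O(2^{-n})$, $\Mbf(Y_n)$ is controlled by the mass of $\tilde Q^{(n)}$ in an $O(2^{-n})$-neighbourhood of $\partial\Omega$, which is bounded by $\Mbf(Q \restrict \{x \in \Omega : \dist(x,\partial\Omega) < C 2^{-n}\}) + 2 \cdot 2^{-n}$. Because $Q$ is a finite Radon measure on $\cl\Omega$ and these neighbourhoods shrink to $\partial\Omega \cap \Omega = \emptyset$, dominated convergence forces $\Mbf(Y_n) \to 0$. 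Letting $n \to \infty$ and then $\eps \to 0$ gives $\dist_{\Lip,\cl\Omega}(T_0, T_1) \leq \Fbf_{0,\cl\Omega}(T_1 - T_0)$, which completes the proof.
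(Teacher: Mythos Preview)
Your polyhedral lifting via the affine graph map $\Phi(x) = (h(x), x)$ is a genuinely different route from the paper. The paper instead writes the polyhedral filling as $R = \sum_\ell p_\ell \dbr{\sigma_\ell}$ with disjoint $\sigma_\ell$, forms the reverse cones $V := \sum_\ell p_\ell\, [\dbr{\partial\sigma_\ell} \revcone z_\ell]$ collapsing each boundary to the center $z_\ell$, and sets $W := -V + \dbr{(0,1)}\times P_1$; Lemma~\ref{lem:injhom} then gives $\Var(W) = \Mbf(R)$. Both constructions achieve $\Var = \Mbf$ of the polyhedral filling exactly, but yours is more economical because it treats the filling globally and avoids the disjointness decomposition. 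For the step connecting $T_i$ to polyhedral approximants the paper uses its deformation theorem (Theorem~\ref{thm:defo}) to produce explicit Lip paths $U_i$ with $\Var(U_i) = O(\eps)$ and then concatenates $U_0, W, U_1^{-1}$, whereas you use Proposition~\ref{prop:approx} plus the Equivalence theorem to absorb this into an $o(1)$; both are legitimate reductions.

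There is, however, a real gap in your last paragraph. The assertion that the neighbourhoods ``shrink to $\partial\Omega \cap \Omega = \emptyset$'' is the wrong limit: the mass of $\tilde Q^{(n)}$ you must control lives in a full neighbourhood of $\partial\Omega$ in $\cl{\Omega_n}$, which contains $\partial\Omega$ and hence shrinks to $\partial\Omega$, not to the empty set. Since $Q \in \Irm_{k+1}(\cl\Omega)$ may carry mass on $\partial\Omega$, one cannot conclude $\Mbf(Y_n)\to 0$. Concretely, take $\Omega = B^2(0,1)\times(0,\delta)\subset\R^3$, $T_0=0$, and $T_1$ the unit circle in the top face; every near-minimizer $Q$ for $\Fbf_{0,\cl\Omega}(T_1)$ is (close to) the top disc, so $\|Q\|(\partial\Omega)\approx\pi$, and a scale-$2^{-n}$ deformation can push essentially all of $Q$ outside $\cl\Omega$, forcing $\Mbf(Y_n)\not\to 0$ and leaving only the non-sharp bound $\dist_{\Lip,\cl\Omega}(T_0,T_1)\le L_0^{k+1}\Mbf(Q)$. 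A secondary point: Proposition~\ref{prop:approx} as stated gives no localisation, so the claim that $\hat P^{(n)}$ arises from $\tilde Q^{(n)}$ by moving mass only $O(2^{-n})$ would have to be justified separately from its proof.
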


\begin{proof}
First, the inequality
\[
  \dist_{\Lip,\cl{\Omega}}(T_0,T_1) \geq \Fbb_{\cl{\Omega}}(T_1 - T_0)
\]
follows easily by taking any $S \in \Irm^\Lip_{1+k}([0,1] \times \cl{\Omega})$ with $\partial S = \delta_1 \times T_1 - \delta_0 \times T_0$, setting $Q := \pbf_* S \in \Irm_{k+1}(\cl{\Omega})$, and observing that $\partial Q = T_1 - T_0$ as well as $\Var(S) \geq \Mbf(Q)$ as in~\eqref{eq:FVar_est}. Taking the infimum over all such $S$ yields the above inequality.

For the other inequality, let $\eps > 0$. We first observe by our deformation theorem, Theorem~\ref{thm:defo}, that for $i = 0,1$ there exist $U_i \in \Irm^\Lip_{1+k}([0,1] \times \cl{\Omega'})$, where $\Omega' := \Omega + B(0,(\sqrt{d}+1)\eps)$, with
\[
  \partial U_i = \delta_1 \times P_i - \delta_0 \times T_i, \qquad
  P_i = \sum_{F \in \Fcal_k(\eps)} p^{(i)}_F \dbr{F}  \qquad
  \partial P_i = 0,
\]
such that
\begin{align*}
  \Mbf(P_i) &\leq C \Mbf(T_i), \\
  \Fbb_{\cl{\Omega'}}(T_i - P_i) \leq \Var(U_i) &\leq C \eps \Mbf(T_i), \\
  \esssup_{t \in [0,1]} \, \Mbf(U_i(t)) &\leq C \Mbf(T_i).
\end{align*}
Here, $\dbr{F}$ is the current associated to an oriented $k$-face $F \in \Fcal_k(\eps)$ of one of the cubes $\eps z + (0,\eps)^d$ with $z \in \Z^d$, and $p^{(i)}_F \in \Z$. The constant $C > 0$ depends only on the dimensions.

Next, take any $Q \in \Irm_{k+1}(\cl{\Omega'})$ with
\[
  \partial Q = P_1 - P_0 \qquad\text{and}\qquad
  \Mbf(Q) \leq \Fbb_{\cl{\Omega'}}(P_1 - P_0) + \eps.
\]
If no such $Q$ exists, the result holds trivially since in this case $\dist_{\Lip,\cl{\Omega}}(T_0,T_1) = \Fbb_{\cl{\Omega}}(T_1 - T_0) = \infty$. So, in the following we assume the existence of at least one such $Q$.

We now apply the approximation result of Proposition~\ref{prop:approx} (which is from~\cite{ColomboDeRosaMarcheseStuvard17,ChambolleFerrariMerlet21}) to $Q$. According to this result, there is a polyhedral chain $R \in \mathrm{IP}_{k+1}(\cl{\Omega''}) \subset \Irm_{k+1}(\cl{\Omega''})$, where $\Omega'' := \Omega' + B(0,\eps) = \Omega + B(0,(\sqrt{d}+2)\eps)$, of the form
\begin{equation} \label{eq:R}
  R = \sum_\ell p_\ell \, \dbr{\sigma_\ell},
\end{equation}
where the $\sigma_\ell$ are convex $(k+1)$-polytopes (not necessarily $(k+1)$-faces of cubes as for $P_1,P_2$) and $p_\ell \in \N$, such that
\[
  \partial R = \partial Q = P_1 - P_0,  \qquad
  \Fbb_{\cl{\Omega''}}(Q - R) < \eps,  \qquad
  \Mbf(R) < \Mbf(Q) + \eps.
\]
Potentially chopping every $\sigma_\ell$ into several sub-polytopes, we may additionally assume that the $\sigma_\ell$ are disjoint up to an $\Hcal^{1+k}$-negligible set. Note that our need for the mass bound $\Mbf(R) < \Mbf(Q) + \eps$ requires the use of an approximation theorem beyond the standard deformation theorem.

We claim that there exists $V \in \Irm^\Lip_{1+k}([0,1] \times \cl{\Omega''})$ with
\[
  \partial V = - \delta_0 \times \partial R
\]
and
\[
  \Var(V) = \Mbf(R),  \qquad
  \esssup_{t \in [0,1]} \, \Mbf(V(t)) < \infty.
\]
Indeed, for every oriented $k$-polytope $\sigma_\ell$ we denote the center of $\sigma_\ell$ by $z_\ell$ and consider the \enquote{reverse cone}
\[
  \dbr{\partial \sigma_\ell} \revcone z_\ell  := \bar{H}_* (\dbr{(0,1)} \times \dbr{\partial \sigma_\ell}) \in \Irm^\Lip_{1+k}([0,1] \times \cl{\Omega''}),
\]
where $H(t,x) := (1-t)x + tz_\ell$ and $\bar{H}(t,x) := (t,H(t,x))$, similarly to Example~\ref{ex:defo_cone} (in fact, the reverse cone is indeed the reversal of the cone $z_\ell \cone \dbr{\partial \sigma_\ell}$ in the sense of Lemma~\ref{lem:reversal}). It follows that
\[
  \partial \bigl[ \dbr{\partial \sigma_\ell} \revcone z_\ell \bigr] = - \delta_0 \times \dbr{\partial \sigma_\ell}
\]
and, by Lemma~\ref{lem:injhom},
\[
  \Var(\dbr{\partial \sigma_\ell} \revcone z_\ell)
  = \Mbf \bigl( H_* (\dbr{(0,1)} \times \dbr{\partial \sigma_\ell}) \bigr)
  = \Mbf(\dbr{\sigma_\ell}).
\]
Moreover, as the $\ell$'th reverse cone shrinks to the point $z_\ell$,
\[
  \esssup_{t \in [0,1]} \, \Mbf \bigl(\bigl[ \dbr{\partial \sigma_\ell} \revcone z_\ell \bigr](t) \bigr)
  \leq \Mbf(\dbr{\partial \sigma_\ell}).
\]
Hence, setting
\[
  V := \sum_\ell p_\ell \, \bigl[ \dbr{\partial \sigma_\ell} \revcone z_\ell \bigr],
\]
we obtain 
\[
  \partial V = - \delta_0 \times \biggl( \sum_\ell p_\ell \, \dbr{\partial \sigma_\ell} \biggr)
  = - \delta_0 \times \biggl( \partial \sum_\ell p_\ell \, \dbr{\sigma_\ell} \biggr)
  = - \delta_0 \times \partial R
\]
and
\[
  \Var(V) = \sum_\ell p_\ell \, \Var( \dbr{\partial \sigma_\ell} \revcone z_\ell )
   = \sum_\ell p_\ell \, \Mbf(\dbr{\sigma_\ell})
   = \Mbf(R).
\]
We have thus constructed $V$ as required.

For $W := -V + \dbr{(0,1)} \times P_1 \in \Irm^\Lip_{1+k}([0,1] \times \cl{\Omega''})$ we compute
\[
  \partial W = \delta_0 \times \partial R + \delta_1 \times P_1 - \delta_0 \times P_1 = \delta_1 \times P_1 - \delta_0 \times P_0
\]
and
\[
  \Var(W) = \Var(V) = \Mbf(R).
\]
We now concatenate $U_0, W$ and the reversal of $U_1$ via Lemmas~\ref{lem:concat},~\ref{lem:reversal} to obtain a Lip-integral current $\tilde{S} \in \Irm^\Lip_{1+k}([0,1] \times \cl{\Omega''})$, for which it holds that $\partial \tilde{S} = \delta_1 \times T_1 - \delta_0 \times T_0$ and
\[
  \absb{\Var(\tilde{S}) - \Mbf(R)} = \Var(U_0) + \Var(U_1) \leq C \eps (\Mbf(T_0) + \Mbf(T_1)).
\]

For $\eps > 0$ suitably small there is a Lipschitz retraction $r \colon \cl{\Omega''} \to \cl{\Omega}$ with $\abs{Dr} = \BigO(\eps)$, see Remark~\ref{rem:retract}. For $S := r_* \tilde{S} \in \Irm^\Lip_{1+k}([0,1] \times \cl{\Omega})$ we then have
\[
  \partial S = \delta_1 \times T_1 - \delta_0 \times T_0
\]
and
\[
  \absb{\Var(S) - \Mbf(R)} \leq \BigO(\eps).
\]
Moreover, using the Lipschitz retraction once more,
\[
  \Fbb_{\cl{\Omega'}}(T_1 - T_0) \leq \Fbb_{\cl{\Omega}}(T_1 - T_0) + \BigO(\eps).
\]
Combining all the above estimates, we get
\begin{align*}
  \dist_{\Lip,\cl{\Omega}}(T_0,T_1) &\leq \Var(S) \\
  &\leq \Mbf(R) + \BigO(\eps) \\
  &\leq \Mbf(Q) + \BigO(\eps) \\
  &\leq \Fbb_{\cl{\Omega'}}(P_1 - P_0) + \BigO(\eps) \\
  &\leq \Fbb_{\cl{\Omega'}}(T_1 - T_0) + \BigO(\eps), \\
  &\leq \Fbb_{\cl{\Omega}}(T_1 - T_0) + \BigO(\eps),
\end{align*}
where in every line we combine the error terms by changing the expression for $\BigO(\eps)$. Letting $\eps \to 0$, we arrive at
\[
  \dist_{\Lip,\cl{\Omega}}(T_0,T_1) \leq \Fbb_{\cl{\Omega}}(T_1 - T_0).
\]
This finishes the proof.
\end{proof}

\begin{remark}
Note that in codimension $1$, i.e., $k+1 = d$, there is only one candidate surface $Q$ with $\partial Q = T_1 - T_0$ in $\Fbb_{\cl{\Omega}}(T_1 - T_0)$, up to a fixed multiple of Lebesgue measure. Indeed, if $Q_i \in \Irm_d(\cl{\Omega})$ with $\partial Q_i = T_1 - T_0$ for $i = 1,2$, then $R := Q_2 - Q_1$ is a boundaryless integral $d$-current in $\R^d$ (recall that we always use the \emph{global} boundary operator). Hence, by the constancy theorem (see, e.g.,~\cite[Theorem~7.3.1]{KrantzParks08book} or~\cite[4.1.4]{Federer69book}), $R$ is a fixed multiple of $\Lcal^d$. Thus, there is only one such surface $Q$ with globally finite mass, which we denote as $\bar{Q}$. Then, $\pbf_* S = \bar{Q}$ for any $S \in \Irm^\Lip_{1+k}([0,1] \times \cl{\Omega})$ with $\partial S = \delta_1 \times T_1 - \delta_0 \times T_0$ since any such $\pbf_* S$ has finite mass in $\R^d$ by Lemma~\ref{lem:mass}. This immediately yields the claim of the preceding theorem in this case.
\end{remark}


\providecommand{\bysame}{\leavevmode\hbox to3em{\hrulefill}\thinspace}
\providecommand{\MR}{\relax\ifhmode\unskip\space\fi MR }
\providecommand{\MRhref}[2]{%
  \href{http://www.ams.org/mathscinet-getitem?mr=#1}{#2}
}
\providecommand{\href}[2]{#2}

\end{document}